\newtheorem{theorem}{Theorem}[section]
\newtheorem*{theorem*}{Theorem}
\newtheorem{lemma}[theorem]{Lemma}
\newtheorem{corollary}[theorem]{Corollary}
\newtheorem{proposition}[theorem]{Proposition}
\newtheorem{remark}[theorem]{Remark}
\newtheorem{construction}[theorem]{Construction}
\newtheorem{definition}[theorem]{Definition}
\newtheorem{example}[theorem]{Example}
\newcommand{\nc}{\newcommand} 
\nc{\cH}{{\mathcal H}}
\nc{\cA}{{\mathcal A}}
\nc{\cG}{{\mathcal G}}
\nc{\cC}{{\mathcal C}}
\nc{\cO}{{\mathcal O}}
\nc{\cI}{{\mathcal I}}
\nc{\cB}{{\mathcal B}}
\nc{\cY}{{\mathcal Y}}
\nc{\cK}{{\mathcal K}} 
\nc{\cX}{{\mathcal X}}
\nc{\cS}{{\mathcal S}}
\nc{\cE}{{\mathcal E}}
\nc{\cF}{{\mathcal F}}
\nc{\cZ}{{\mathcal Z}}
\nc{\cQ}{{\mathcal Q}}
\nc{\cN}{{\mathcal N}}
\nc{\cP}{{\mathcal P}}
\nc{\cL}{{\mathcal L}}
\nc{\cM}{{\mathcal M}}
\nc{\cT}{{\mathcal T}}
\nc{\cW}{{\mathcal W}}
\nc{\cU}{{\mathcal U}}
\nc{\cJ}{{\mathcal J}}
\nc{\cV}{{\mathcal V}}
\nc{\bH}{{\mathbb H}}
\nc{\bA}{{\mathbb A}}
\nc{\bG}{{\mathbb G}}
\nc{\bC}{{\mathbb C}}
\nc{\bO}{{\mathbb O}}
\nc{\bI}{{\mathbb I}}
\nc{\bB}{{\mathbb B}}
\nc{\bY}{{\mathbb Y}}
\nc{\bK}{{\mathbb K}} 
\nc{\bX}{{\mathbb X}}
\nc{\bS}{{\mathbb S}}
\nc{\bE}{{\mathbb E}}
\nc{\bF}{{\mathbb F}}
\nc{\bZ}{{\mathbb Z}}
\nc{\bQ}{{\mathbb Q}}
\nc{\bN}{{\mathbb N}}
\nc{\bP}{{\mathbb P}}
\nc{\bL}{{\mathbb L}}
\nc{\bM}{{\mathbb M}}
\nc{\bT}{{\mathbb T}}
\nc{\bW}{{\mathbb W}}
\nc{\bU}{{\mathbb U}}
\nc{\bD}{{\mathbb D}}
\nc{\bJ}{{\mathbb J}}
\nc{\bV}{{\mathbb V}}
\nc{\bbZ}{{\mathbb Z}}
\nc{\bR}{{\mathbb R}}
\nc{\fr}{{\rightarrow}}
\nc{\co}{{\nabla}}
\nc{\cu}{{\barline{\nabla}}}
\DeclareMathOperator{\Sec}{Sec}
\DeclareMathOperator{\Sing}{Sing}
\DeclareMathOperator{\Supp}{Supp}
\title[Lefschetz properties for cubic fourfolds and other Artinian algebras]{Lefschetz properties for jacobian rings of cubic fourfolds and other Artinian algebras}
\author{Davide Bricalli}
\address{Dipartimento di Matematica e Applicazioni,
	Universit\`a degli Studi di Milano-Bicocca,
	Via Roberto Cozzi, 55,
	I-20125 Milano, Italy}
\email{d.bricalli1@campus.unimib.it}
\author{Filippo Francesco Favale}
\address{Dipartimento di Matematica,
	Universit\`a degli Studi di Pavia,
	Via Ferrata, 5
	I-27100 Pavia, Italy}
\email{filippo.favale@unipv.it}
\date{\today}
\thanks{
\textit{2020 Mathematics Subject Classification}: Primary: 14J35; Secondary: 13E10, 14J70, 14M10, 13F20, 14N05\\
\textit{Keywords}: Artinian Gorenstein algebras, Cubic fourfolds, Jacobian rings, Lefschetz properties, complete intersections}
\begin{document}

\maketitle

\begin{abstract}
    In this paper, we exploit some geometric-differential techniques to prove the strong Lefschetz property in degree $1$ for a complete intersection standard Artinian Gorenstein algebra of codimension $6$ presented by quadrics. We prove also some strong Lefschetz properties for the same kind of Artinian algebras in higher codimensions. Moreover, we analyze some loci that come naturally into the picture of "special" Artinian algebras: for them we give some geometric descriptions and show a connection between the non emptiness of the so-called non-Lefschetz locus in degree $1$ and the "lifting" of a weak Lefschetz property to an algebra from one of its quotients. 
\end{abstract}



\section*{Introduction}

In this article we exploit and generalize some techniques that the authors have developed in previous works on these topics, in collaboration with Gian Pietro Pirola (see \cite{FP21} and \cite{BFP}) in order to prove the validity of some Lefschetz properties for particular Artinian algebras. More precisely, we deal with standard Artinian Gorenstein graded algebras over a field $\bK$ of characteristic $0$. For brevity, we refer to such an algebra using the acronym {\it SAGA}. SAGAs are generated in degree $1$ and satisfy a sort of "inner duality" property induced by the products of elements in complementary degrees (see Section \ref{DEF:ARTGOR} for details). We recall that the codimension of a standard Artinian graded algebra $R$ is the dimension of $R^1$, i.e. its degree-1 part, as vector space over $\bK$.

Classical examples of these algebras are jacobian rings of smooth hypersurfaces in projective spaces. These are constructed by considering the smooth hypersurface $X=V(F)\subseteq \bP^n$ with $F$ homogeneous in $S=\bK[x_0,\cdots,x_n]$ and by taking the quotient $R=S/I$ where $I$ is the ideal generated by the partial derivatives of $F$ (the so-called jacobian ideal of $F$). The importance of these objects lies, for example, in the strong geometric relations between $X$ and its jacobian ring. Just to mention some, in the seminal works \cite{CG,IVHS_I,IVHS_II,IVHS_III}, Carlson, Griffiths, Green and Harris proved that a portion of the primitive part of the Dolbeault cohomology is codified in $R$ and that $R$ plays a crucial role in the study of the infinitesimal variation of Hodge structure of $X$.

This construction is a special case of a more general one. Indeed, one can also consider the quotient $S/I$, where $I$ is generated by a regular sequence of length $n+1$ of homogeneous polynomials, i.e. $I=(f_0,\cdots,f_n)\subset\bK[x_0,\cdots,x_n]$, where $V(f_0,\cdots,f_n)\subset\bP^n$ is the empty set. This kind of algebras are SAGAs and are also complete intersection rings so it makes sense to refer to them as {\it complete intersection SAGAs}. For brevity one often says (see, for example, \cite{MN13}) that $R$ is a {\it complete intersection SAGA presented by forms of degree $e$}, if the generators of the ideal $I$ have all the same degree, equal to $e$. Our main result (Theorem \ref{THM:SLP1}) will deal with the case $e=2$, i.e. with complete intersection SAGAs presented by forms of degree $2$.
\medskip

Lefschetz properties for an Artinian algebra were defined in the 80's by taking inspiration from the Hard Lefschetz Theorem for a K\"ahler variety $X$ of complex dimension $n$. We recall that this theorem states that for all $k\leq n$ the cup product map $\omega^{k}\cdot :H^{n-k}(X,\bC)\to H^{n+k}(X,\bC)$ is an isomorphism for $\omega\in H^2(X,\bC)$ generic (more precisely, for any K\"ahler form in $H^{1,1}_{\bar{\partial}}(X)$). Then, the even cohomology ring $R=\bigoplus_{i=0}^{n}R^i=\bigoplus_{i=0}^{n}H^{2i}(X,\bC)$ of $X$ is a Gorenstein Artinian algebra. In particular, $R$ is such that for $x\in R^1$ general, the multiplication map $x^{n-2i}\cdot: R^i\to R^{n-i}$ is an isomorphism for $i\leq n/2$. This is, roughly, the definition of the {\it strong Lefschetz property} ($SLP$) for an Artinian algebra (for details see Definition \ref{DEF:LefProp}). Starting from this idea a weaker version has been introduced by following similar ideas: an algebra $R$ has the {\it weak Lefschetz property} ($WLP$) if the multiplication map $x\cdot :R^k\to R^{k+1}$ is of maximal rank for all $k\geq 0$ and $x\in R^1$ general.
For a comprehensive treatment of Lefschetz properties the interested reader can refer to \cite{bookLef}.
\medskip

In the recent years, there has been a growing interest in Artinian algebras and their Lefschetz properties. We summarize here some results and conjectures relevant with respect to the topics of our article. In \cite{HMNW} it is shown that any Artinian standard algebra $R$ of codimension $2$ satisfies $SLP$. In the same article, it is also proved that if $R$ has codimension $3$ and is a complete intersection then it satisfies $WLP$. Again in codimension $3$, if one restricts its attention to jacobian rings of curves of degree $d$ in $\bP^2$, the validity of $SLP$ is known only up to degree $4$ with the case for $d=4$ treated in the very recent work \cite{DGI20}. In codimension $4$, again in \cite{DGI20}, the authors prove that jacobian rings of cubic surfaces have the $SLP$. In particular, nothing is known for Lefschetz properties for jacobian rings when the degree increases: quintic curves in $\bP^2$ (regarding $SLP$) and quartic surfaces in $\bP^3$ (regarding both $WLP$ and $SLP$) are the first open cases.
For what concerns the case of complete intersection SAGAs presented by quadrics, in \cite{MN13} it is shown that $WLP$ holds in degree $1$ (i.e. $x\cdot :R^1\to R^2$ is injective for $x\in R^1$ general) in any codimension. If one wonders if all SAGAs have to satisfy $WLP$, this is known to be false as there are counterexamples (which are not complete intersection) in higher codimension (\cite{GZ18}) but it is conjectured to hold for complete intersection SAGAs. For codimension bigger than or equal to $5$ very little is known. For codimension $5$, $WLP$ has been proved in \cite{AR19} for complete intersection SAGAs presented by quadrics whereas, in \cite{BFP} the authors proved $SLP$ for the same SAGAs (in particular, for jacobian rings of smooth cubic threefolds). The cases of higher codimensions or higher degrees are, at the moment, completely open. 

Just to mention some other papers, which treat problems related to Lefschetz properties for Artinian Algebras, we cite, for example, \cite{MMN, MMO, Ila18, AAISY} and \cite{DI22}. These articles are really interesting on their own since the approaches used to study, roughly, the "same" problems are really different and involve a lot of different techniques.

In this work, we follow the "geometric-differential" approach used in \cite{AR19} and \cite{BFP} by exploiting and generalising the techniques of the latter, in order to prove, partially, the $SLP$ for the first open case with low degree in higher codimension. More precisely, we deal with a complete intersection SAGA $R$ presented by quadrics of codimension $6$ and we prove our first main result:
\begin{theorem*}{(Theorem \ref{THM:SLP1})}
Let $R$ be a SAGA as above. Then $R$ satisfies the $SLP$ in degree $1$, i.e. for $x$ general in $R^1$ the multiplication map $x^4\cdot:R^1\rightarrow R^5$ is a bijection.
\end{theorem*}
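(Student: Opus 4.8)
The plan is to recast the statement as a nondegeneracy question for a symmetric bilinear form and then attack it by the geometric-differential method. First I record that the Hilbert series of $R$ is $(1+t)^6$, so $\dim R^1=\dim R^5=6$ and the map $x^4\cdot\colon R^1\to R^5$ is bijective as soon as it is injective. Using the Gorenstein pairing $R^5\times R^1\to R^6\cong\bK$ to identify $R^5\cong (R^1)^*$, the map $x^4\cdot$ becomes the symmetric bilinear form
\[
Q_x(y,z)=x^4yz\in R^6\cong\bK,\qquad y,z\in R^1,
\]
and SLP in degree $1$ for a fixed $x$ is exactly nondegeneracy of $Q_x$; for general $x$ it amounts to $\det Q_x\not\equiv 0$ as a polynomial on $R^1$. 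The factorization $(x^2y)(x^2z)=x^4yz$ shows moreover that $Q_x(y,z)=\langle x^2y,x^2z\rangle$ for the perfect self-pairing on $R^3$, so $Q_x$ is nondegenerate if and only if $x^2\cdot\colon R^1\to R^3$ is injective and the $R^3$-pairing restricts nondegenerately to the $6$-dimensional subspace $U_x:=x^2R^1\subseteq R^3$.

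The core is then to prove $\det Q_x\not\equiv 0$ by contradiction. Assume it vanishes identically (i.e. the non-Lefschetz locus in degree $1$ is the whole space); then for general $x$ the kernel $K(x)=\{y\in R^1 : x^4y=0\in R^5\}$ is nonzero, and one may choose a local section $x\mapsto y(x)$ with $x^4y(x)=0$. Differentiating this identity in a direction $v\in R^1$ gives $4x^3(vy(x))+x^4y'(x)[v]=0$, whence $x^3(vy(x))\in x^4R^1=x^3(xR^1)$ for every $v$, that is
\[
y(x)\,R^1\ \subseteq\ xR^1+\ker\!\big(x^3\cdot\colon R^2\to R^3\big)\ \subseteq\ R^2 .
\]
Here the weak Lefschetz property in degree $1$ (known in every codimension by \cite{MN13}) gives $\dim xR^1=6$, and the exact Hilbert function forces the right-hand side to be a \emph{proper} subspace of $R^2$, its defect being measured precisely by $\dim K(x)$ (one checks $\ker(x^3\cdot)\cap xR^1=xK(x)$). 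This is the concrete incarnation of the promised link between non-emptiness of the non-Lefschetz locus and a weak-Lefschetz defect: the persistence of $y(x)$ for all $x$ would propagate a maximal-rank failure into a general hyperplane-section quotient $R/(\ell)$, which is governed by the codimension-$5$ complete intersection of quadrics. Invoking the $SLP$ established for that case in \cite{BFP}, together with a second differentiation (and letting $x$ itself vary) to over-determine $y(x)$, should force $K(x)=0$ and yield the contradiction.

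The main obstacle is closing this differential argument, namely controlling the family $x\mapsto K(x)$ — its generic dimension and the regularity of the section $y(x)$ — and turning the inclusion above into an actual contradiction rather than a mere dimension bound. This is exactly the step where one must show that the non-Lefschetz locus in degree $1$ is \emph{empty}, not just small, and it is here that the arithmetic of the problem (\emph{exactly} six quadrics in six variables, the quadratic presentation, and the resulting symmetric Hilbert function) is indispensable; a soft openness or monomial-degeneration argument only settles a general $R$, whereas the claim must hold for every such $R$. The genuinely new input compared with the codimension-$5$ situation of \cite{BFP} is precisely the lifting mechanism that transfers the weak Lefschetz property from the quotient back to $R$, and making that transfer rigorous is where I expect the real work to lie.
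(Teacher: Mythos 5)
Your proposal is a proof plan, not a proof, and the plan's central mechanism does not work. The setup is fine: reducing bijectivity to injectivity via $\dim R^1=\dim R^5$, and the first-order differentiation of $x^4y(x)=0$, which gives $4vx^3y(x)+x^4y'(x)[v]=0$ for all $v\in R^1$. (Note two slips: the kernel you write should be $\ker(x^3\cdot:R^2\to R^5)$, not $R^2\to R^3$; and the failure of $SLP_1(4)$ is not "the non-Lefschetz locus in degree $1$ is the whole space" in the paper's sense -- that locus concerns $x\cdot:R^1\to R^2$, whose injectivity is known by \cite{MN13}.) But from this point on nothing is actually proved. The genuine gap is the step you yourself flag: you never derive the contradiction, you only assert that a reduction to a hyperplane-section quotient "should force $K(x)=0$". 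That reduction is unsound: for a \emph{general} linear form $\ell$, the quotient $R/(\ell)$ is the quotient of $\bK[x_0,\dots,x_5]/(\ell)$ (five variables) by the images of \emph{six} quadrics, which do not form a regular sequence; $R/(\ell)$ is neither a complete intersection presented by quadrics nor, in general, Gorenstein, so the codimension-$5$ $SLP$ theorem of \cite{BFP} cannot be invoked for it. The paper's Lemma \ref{LEM:quotsaga} shows that $R/(z)$ has the required structure precisely when $z$ is a non-Lefschetz element in degree $1$ (i.e.\ $zw=0$ for some $w\neq 0$), and such a $z$ need not exist -- indeed for generic $R$ none exists. Moreover, the lifting statement you appeal to (Theorem \ref{THM:induction}) transfers $WLP_2$, not $SLP_1(4)$, and it plays no role in the paper's proof of Theorem \ref{THM:SLP1}; it is an independent result about special SAGAs.

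For comparison, the paper closes the argument in a completely different way. From the failure of $SLP_1(4)$ it builds the incidence variety $\Gamma_{4}=\{([x],[y]) : x^4y=0\}$, takes the unique component $\Theta$ dominating $\bP(R^1)$, and sets $Y=\pi_2(\Theta)$, $F_y=\pi_1(\pi_2^{-1}([y]))$. The same first-order identity you wrote is exploited differently: multiplying $4vx^3y+x^4w=0$ by $y$ kills the unknown term (since $x^4y=0$) and yields $vx^3y^2=0$ for \emph{all} $v$, hence $x^3y^2=0$ by the perfect pairing, and iterating, $x^{\alpha}y^{\beta}=0$ for all $\alpha+\beta=5$, $\beta\geq 1$ (Proposition \ref{PROP:KERCOKERDelux}). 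This, together with the kernel bound $\dim K^1_{\eta}\leq s$ of Proposition \ref{PROP:SameKer} and tangent-space comparisons (Lemmas \ref{LEM:tangnihil}, \ref{LEM:xtangy}), pins the pair $(\dim Y,\dim F_y)$ into a finite table, and Propositions \ref{PROP:Fyhyp}, \ref{PROP:dimYdiversoda_nmeno2} and \ref{PROP:2,3} rule out every entry. If you want to salvage your approach, the missing idea is exactly this: eliminate the underived term $x^4y'(x)[v]$ by multiplying by $y$, rather than carrying the weaker inclusion $y(x)R^1\subseteq xR^1+K^2_{x^3}$, and then convert the resulting relations into geometric constraints on the family $x\mapsto K(x)$ instead of attempting a hyperplane-section descent.
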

This result is a step towards the evidence of a well known conjecture, according to which, in characteristic $0$ all complete intersection SAGAs should satisfy the Lefschetz properties (see \cite[Conjecture 3.46]{bookLef}, for example). Beyond being the first open case, the interest for these SAGAs lies in the fact that they involve Lefschetz properties for the jacobian rings of cubic fourfolds, which are very interesting on their own for what concerns several other unrelated topics (such as, for example, the well known Kuznetsov's conjecture about the rationality of the cubic fourfolds).
\medskip

We stress that Lefschetz properties, at least for a {\it general} complete intersection SAGA $S/I$ with fixed degrees of the generators of $I$, are known to hold. For this reason, it is interesting to analyze "special" SAGAs. With this idea, we proceed by obtaining two interesting results concerning special SAGAs. The first one is a collection of geometric properties satisfied by the subschemes 
$$\cN_i:=\{[x]\in \bP(R^1)\,|\, x^i=0\}$$
that we call {\it Nihilpotent loci}.  
Roughly, when $R$ is "general", $\cN_i$ is empty for low values of $i$, then, in case of non-emptiness, we do expect that these loci reflect some properties of "special" SAGAs. The most relevant results are Proposition \ref{PROP:Y2points}, which has as a consequence, a characterization of Fermat hypersurfaces (see \ref{COR:ReconstructFermat}) and Theorem \ref{THM:SEC} which analyzes the geometric behaviour of these loci when they contain a linear space.
\medskip

For the second result, we consider what are called the {\it non-Lefschetz loci}, i.e. the subschemes parametrising linear forms for which the injectivity of the multiplication map fails (see Definition \ref{DEF:LefLocus}). These are really interesting for the geometry and the algebraic structure of a SAGA. Just to mention some articles which study these loci, one can consider \cite{BMMN} and \cite{AR19}. Our contribution for this topic is the following lifting criterion:
\begin{theorem*}{(Theorem \ref{THM:induction})}
Let $R=S/I$ be a complete intersection SAGA of codimension $n+1\geq 6$ presented by quadrics and assume that $z$ is a non-Lefschetz element for $R$. If $\bar{R}=R/(z)$ satisfies the $WLP$ in degree $2$, then the same holds for $R$.
\end{theorem*}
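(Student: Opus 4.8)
The plan is to prove that for general $y\in R^1$ the multiplication $y\cdot\colon R^2\to R^3$ is injective; since $\dim_{\bK}R^2=\binom{n+1}{2}<\binom{n+1}{3}=\dim_{\bK}R^3$ for $n+1\ge 6$, this injectivity is exactly the $WLP$ in degree $2$ for $R$. Write $K=\Ann(z)\subseteq R$ and $A=R/K$. Since $R$ is Gorenstein, the double–annihilator relation gives $\Ann(\Ann(z))=zR$, so the canonical module of $A$ is $\omega_A=\Ann_R(K)=zR\cong R/\Ann(z)=A$; hence $A$ is again a SAGA, of socle degree $n$, and multiplication by $z$ induces graded isomorphisms $A^{i}\xrightarrow{\ \sim\ }(zR)^{i+1}$ which are compatible with multiplication by any $y\in R^1$. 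Note also that $z$ being non-Lefschetz means $K^1=\ker(z\cdot\colon R^1\to R^2)\neq 0$.

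Next I would run a diagram chase. For general $y$, apply multiplication by $y$ to the exact sequences $0\to (zR)^i\to R^i\to \bar{R}^i\to 0$ in degrees $2$ and $3$:
\[
\begin{CD}
0 @>>> (zR)^2 @>>> R^2 @>>> \bar{R}^2 @>>> 0\\
@. @VV{y}V @VV{y}V @VV{\bar{y}}V @.\\
0 @>>> (zR)^3 @>>> R^3 @>>> \bar{R}^3 @>>> 0
\end{CD}
\]
The snake lemma produces
\[
0\to \ker\!\big(y\cdot|_{(zR)^2}\big)\to \ker\!\big(y\cdot|_{R^2}\big)\to \ker\!\big(\bar{y}\cdot|_{\bar{R}^2}\big)\to\cdots.
\]
Since $\bar{R}$ satisfies the $WLP$ in degree $2$ by hypothesis and a Hilbert-function check (using $\dim\bar{R}^i=\dim R^i-\dim A^{i-1}$) shows $\dim\bar{R}^2\le\dim\bar{R}^3$, the map $\bar{y}\cdot\colon \bar{R}^2\to\bar{R}^3$ is injective for general $y$. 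Hence $\ker(y\cdot|_{R^2})\cong\ker(y\cdot|_{(zR)^2})$, and through the isomorphisms $A^1\cong(zR)^2$, $A^2\cong(zR)^3$ this is identified with $\ker(y\cdot\colon A^1\to A^2)$. Thus, under the hypothesis, $R$ satisfies the $WLP$ in degree $2$ \emph{if and only if} $A=R/\Ann(z)$ satisfies the $WLP$ in degree $1$.

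The heart of the argument is therefore to establish the $WLP$ in degree $1$ for $A$. The route I would take is to recognize $A$ as a complete intersection SAGA presented by quadrics of smaller codimension and then invoke the degree-$1$ result of \cite{MN13}, which holds in any codimension (this is the natural inductive input, consistent with the name of the statement). Concretely, I would pass to a Macaulay dual generator $F$ of $R$, use that the dual generator of $A$ is then $\ell\circ F$ for a linear form $\ell$ lifting $z$, and analyze the apolar ideal of $\ell\circ F$: after eliminating the $\dim_{\bK}K^1$ linear forms lying in $\Ann(z)$ one should recover a regular sequence of quadrics, so that $A$ is of the required type and \cite{MN13} applies. The main obstacle is precisely this structural step: controlling $\Ann(z)$ for a \emph{special} (non-Lefschetz) element $z$ well enough to guarantee that $A$ is presented by quadrics, equivalently that its Hilbert function is $\binom{c}{i}$ with $c=\dim_{\bK}A^1$. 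This is clean when $\dim_{\bK}K^1=1$ (then $A$ has socle degree equal to its codimension and the quadric presentation is forced), but becomes genuinely delicate when $\dim_{\bK}K^1\ge 2$, where $A$ can no longer be presented by quadrics alone; there I expect one must fall back on the geometric-differential (Hessian) analysis of $\ell\circ F$ to force the generic injectivity of $y\cdot\colon A^1\to A^2$ directly.
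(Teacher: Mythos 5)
Your snake-lemma reduction is correct and is a genuinely different route from the paper's: the paper argues by contradiction, showing that failure of $WLP_2$ for $R$ plus $WLP_2$ for $\bar R$ makes the incidence variety $\Gamma=\{([x],[v])\ :\ xvz=0\}$ dominate $\bP(R^1)$ with one-dimensional general fiber, forcing $Y=\pi_2(\Theta)=\bP(R^1)$, whence a first-order deformation argument gives $v^2z=0$ for all $v\in R^1$, so $zR^2=0$, contradicting Gorenstein duality. Granting your two inputs (injectivity of $\bar y$ on $\bar R^2$ and of $y$ on $A^1$ for general $y$), your diagram chase does prove $WLP_2$ for $R$. However, the step you yourself call ``the heart'' is a genuine gap, and your proposed handling is flawed in both branches. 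First, the dichotomy you set up is spurious: in a complete intersection SAGA presented by quadrics one always has $\dim K^1_z\le 1$ by Proposition \ref{PROP:SameKer}(a) with $d=3$ (i.e.\ \cite[Proposition 4.1]{BFP}), so for a non-Lefschetz element $z$ the kernel is exactly $\langle w\rangle$; the ``delicate case'' $\dim K^1_z\ge 2$, which you defer to an unspecified Hessian analysis, never occurs --- but you neither prove nor cite this. Second, in the case $\dim K^1_z=1$ your justification --- that socle degree equal to codimension ``forces'' the quadric presentation, ``equivalently'' Hilbert function $\binom{c}{i}$ --- is false as a general principle: the codimension-$3$ Gorenstein algebra with Macaulay dual generator $x^2y+z^3$ has Hilbert function $(1,3,3,1)$, yet its apolar ideal has quadric part $\langle XZ,\,Y^2,\,YZ\rangle$, whose elements all vanish at $[1{:}0{:}0]$, so they cannot generate an Artinian ideal and a cubic generator is unavoidable. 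So neither branch of your structural step is established.

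What fills the hole is precisely the paper's Lemma \ref{LEM:quotsaga}: if $zw=0$ with $z,w\in R^1\setminus\{0\}$, then $(0:w)=(z)$ and $R/(z)$ is again a complete intersection SAGA presented by quadrics of codimension $n$; the proof completes $zw$ to a regular sequence of quadrics generating the ideal and identifies $R/(z)$ with the SAGA $R/(0:w)$ through the socle. Applied with the roles of $z$ and $w$ exchanged, it gives $\Ann(z)=(0:z)=(w)$, so your $A=R/\Ann(z)=R/(w)$ is a complete intersection SAGA presented by quadrics of codimension $n$, and $WLP_1$ for $A$ then follows unconditionally from \cite[Proposition 4.3]{MN13} (or \cite[Corollary 4.3]{BFP}), exactly as you hoped. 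The same lemma also supplies the Hilbert function $\dim\bar R^i=\binom{n}{i}$ that your ``Hilbert-function check'' $\dim\bar R^2\le\dim\bar R^3$ silently needs: without knowing $\Ann(z)$ you cannot evaluate $\dim A^{i-1}$ in your formula $\dim\bar R^i=\dim R^i-\dim A^{i-1}$, and if the inequality failed, $WLP_2$ for $\bar R$ would only give surjectivity and the snake-lemma argument would collapse. With Lemma \ref{LEM:quotsaga} inserted, your route closes and is a clean alternative to the paper's incidence-variety proof; without it, the proposal does not prove the theorem.
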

This is a result for "special" SAGAs since, as for the Nilpotent loci, when $R$ is general, all $[x]\in \bP(R^1)$ are Lefschetz elements. This unexpected theorem gives a sort of converse for results which prove that Lefschetz properties are inherited by suitable quotients. This happens, for example, for $SLP$ when one takes the quotient by the conductor of a Lefschetz element (\cite[Proposition 3.11]{bookLef}) or, for the $WLP$ as shown in the recent article \cite{Gue}.
\medskip

The plan of the article is the following. In Section \ref{SEC:Notations}, we set our notation and we introduce the main characters of the paper. Moreover, we establish a general framework that will be used in several specific situations through the paper, and we prove the first general technical results. 
In Section \ref{SEC:Nihil}, we analyze the nihilpotent loci $\cN_i$ for a SAGA $R$ regardless of the validity of some Lefschetz properties. We study them from a geometric point of view and we recollect some examples which show how the nature of these loci can be very different from one case to another. 
In Section \ref{SEC:SLP1} we prove the first main result, i.e. Theorem \ref{THM:SLP1}.
In Section \ref{SEC:WLP}, we analyze the weak Lefschetz property for complete intersection SAGAs presented by quadrics and prove the "lifting" of the $WLP$ to $R$ from suitable quotients, i.e. Theorem \ref{THM:induction}.
Finally, one might wonder how much the different techniques developed in this paper could be exploited to study the next open cases. In Section \ref{SEC:Higher} we try to answer to this question, by generalizing the validity of some strong Lefschetz properties to complete intersection SAGAs presented by quadrics of higher codimension.
\medskip

\noindent {\bf Acknowledgements}: \\
The authors are thankful to Gian Pietro Pirola for his constant support and his priceless suggestions and to Alberto Alzati and Riccardo Re for the helpful discussions on the topics of the paper. The authors are partially supported by INdAM - GNSAGA and by PRIN \emph{``Moduli spaces and Lie theory''} and by (MIUR): Dipartimenti di Eccellenza Program (2018-2022) - Dept. of Math. Univ. of Pavia.\\


\section{Notations, preliminaries and technical results}
\label{SEC:Notations}

Through all the article we will assume that $\bK$ is an algebraically closed field of characteristic $0$. 
First of all, let us recall two basic definitions that will play an important role in what follows (the reader can refer to \cite{bookLef} for a deeper treatment). 

\begin{definition}
\label{DEF:ARTGOR}
Consider an Artinian graded $\bK$-algebra $R=\bigoplus_{i=0}^NR^i$. Then
\begin{itemize}
    \item the dimension of $R^1$ as $\bK$-vector space is called {\bf codimension} of $R$;
    \item $R$ is {\bf standard} if it is generated by $R^1$ as $\bK$-algebra;
    \item $R$ is said to have the {\bf Gorenstein duality} if $R^N\simeq \bK$ and the multiplication map $R^j\times R^{N-j}\to R^{N}$ is a perfect pairing whenever $0\leq j\leq N$.
\end{itemize}
If $R$ is a graded Artinian algebra, having the Gorenstein duality is equivalent to ask that $R$ is Gorenstein. In this case, $R^N$ is called {\bf socle} of $R$ and $R^0\simeq R^N$. A standard Artinian Gorenstein algebra will be called, for brevity, {\bf SAGA}. 
\end{definition}

Examples of SAGAs are algebras that can be written as $\bK[x_0,\dots, x_n]/I$ with $I$ an homogeneous ideal generated by a regular sequence of polynomials of length $n+1$. We will be interested in the case where the generators of $I$ have all the same degree $e$: for brevity, in this situation we say that $R$ is a {\bf complete intersection SAGA presented by forms of degree $e$}. Relevant examples of this kind of SAGAs are jacobian rings of smooth hypersurfaces of $\bP^n$.
\medskip

We now introduce Lefschetz properties for a SAGA $R$. These involve the multiplication maps by suitable elements of $R$. In the following we will often deal with kernels of such maps so it is convenient to set 
$$K^a_q:=\ker\left(q\cdot :R^a\to R^{a+s}\right)\qquad \mbox{ for }q \in R^s.$$ 

\begin{definition}
\label{DEF:LefProp}
Consider a SAGA $R=\bigoplus_{i=0}^NR^i$. It is said to satisfy the 
\begin{itemize}
    \item weak Lefschetz property in degree $k$, ($WLP_k$ in short) if there exists $x\in R^1$ such that $x\cdot :R^k\to R^{k+1}$ has maximal rank;
    \item strong Lefschetz property in degree $k$ at range $s$, ($SLP_k(s)$ in short) if there exists $x\in R^1$ such that $x^{s}\cdot :R^k\to R^{k+s}$ has maximal rank.
\end{itemize}
The algebra $R$ has the strong Lefschetz property in degree $k$ ($SLP_k$) if $SLP_k(s)$ holds for all $s$. We also say that $R$ satisfies {\bf weak (strong) Lefschetz property} - $WLP$ (respectively $SLP$) in short - if it satisfies $WLP_k$ (respectively $SLP_k$) for all $k$. 
\end{definition}

\begin{remark}
For SAGAs, the above definition of $SLP$ is equivalent to ask $SLP_k(N-2k)$ for all $k\leq N/2$ (see Definition $3.18$ and subsequent discussion in \cite{bookLef}). Moreover, notice that if $k\leq N/2$ and $1\leq s\leq N-2k$, $SLP_k(s)$ implies $SLP_k(s-1)$. Note that, by definition, $WLP_k$ is equivalent to $SLP_k(1)$ and, by Gorenstein duality, $WLP_k$ holds if and only if $WLP_{N-k-1}$ holds. Finally, for $k\leq N/2$, $WLP_k$ implies $WLP_{k-1}$ by \cite[Proposition 2.1]{MMN}.
\end{remark}

Elements of $\bP(R^1)$ for which the multiplication map is not of maximal rank will be important in Section \ref{SEC:WLP} so it is convenient to introduce the following subschemes of $\bP(R^1)$.

\begin{definition}
\label{DEF:LefLocus}
Let $R$ be any SAGA with socle in degree $N$. For $1\leq a\leq N-1$ we define the {\bf Lefschetz locus in degree $a$} to be
$$\cL_a:=\{[x]\in \bP(R^1)\,|\, x\cdot :R^a\to R^{a+1} \mbox{ has maximal rank}\}\subset \bP(R^1).$$
An element $[x]\in \bP(R^1)$ (or, equivalently, $x\in R^1\setminus\{0\}$) is called {\bf Lefschetz element in degree $a$} if $[x]\in \cL_a$. On the contrary, elements not in $\cL_a$ are called {\bf non-Lefschetz elements (in degree $a$)}. 
\end{definition}

Geometric results on these loci can be found, for example, in \cite{AR19} and \cite{BMMN}. 
\medskip

In the next sections, we will make a large use of the framework that we present here in a general setting. This kind of construction has been used in some articles (see, for example \cite{BFP} and \cite{AR19}) in order to study from a geometric perspective SAGAs which do not satisfy some Lefschetz properties.

Let $R$ be a SAGA of codimension $n+1$ and socle in degree $N$. 
For $1\leq a,b\leq N$ we set
$$\Gamma_{i,j}^{(a,b)}=\{([x],[y])\in \bP(R^a)\times \bP(R^b)\,|\, x^iy^j=0\}\quad \mbox{ and }\quad \cN^{(a)}_k=\{[x]\in \bP(R^a)\,|\, x^k=0\}.$$
We will refer to $\cN_k^{(a)}$ as {\bf nihilpotent loci} of $\bP(R^a)$. For brevity, we will set $\cN_k^{(1)}=\cN_k$. We will denote by $p_1$ and $p_2$ the standard projections from $\Gamma_{i,j}^{(a,b)}$ to $\bP(R^a)$ and $\bP(R^b)$, respectively. Notice that when we consider $\Gamma_{s,1}^{(a,b)}$, we have
$$p_1^{-1}([x])=\{([x],[y])\,|\, x^sy=0\}=[x]\times \bP(K^b_{x^s})$$
so all the fibers of $p_1$ are projective spaces (not necessarily of the same dimension).
\medskip

We will often assume the failure of some Lefschetz property and this assumption will reflect on a condition on the projection $p_1$. More precisely, we make the following observations.

\begin{itemize}
    \item Assume that $b\leq N/2$ and that $SLP_b(s)$ does not hold. Then, for all $[x]\in \bP(R^1)$ we have that the multiplication map $x^s\cdot :R^b\to R^{b+s}$ is not injective. In particular, there exists $[y]\in \bP(R^b)$ such that $x^sy=0$ in $R^{b+s}$. This shows that the failure of $SLP_b(s)$ is equivalent to ask that $p_1:\Gamma_{s,1}^{(1,b)}\to \bP(R^1)$ is surjective.
    \item Assume that $p_1:\Gamma_{s,1}^{(a,b)}\to \bP(R^a)$ is surjective. Then, as observed above, we have that all the fibers of $p_1$ are projective spaces and this implies that there exists a unique irreducible component $\Theta$ of $\Gamma_{s,1}^{(a,b)}$ which dominates $\bP(R^a)$ via $p_1$. In this case, we set 
    $$\pi_i=p_i|_{\Theta},\quad Y=p_2(\Theta)=\pi_2(\Theta)\qquad \mbox{ and } \qquad F_y=\pi_1(\pi_2^{-1}([y])) \mbox{ for all } [y]\in Y.$$
\end{itemize}

\begin{construction}
\label{CONSTR:OURFRAMEWORK}
To summarize, if $R$ is a SAGA of codimension $n+1$ and socle in degree $N$ and we assume that $SLP_b(s)$ does not hold for $R$, we can construct the loci $\Gamma_{s,1}^{(1,b)}, \Theta,Y$ and $F_y$ as above and we have the following diagram
\begin{equation}
\xymatrix@R=15pt@C=40pt{
F_y\times [y] \ar[ddd]_{\simeq} \ar@{^{(}->}[rd] \ar[rrr]^-{\pi_2} & & & [y] \ar@{^{(}->}[d] \\ 
& \Theta \ar@/_1.0pc/@{->>}[ddr]_-{\pi_1}  \ar@{->>}[rr]^-{\pi_2} \ar@{^{(}->}[rd]  & & Y \ar@{^{(}->}[d] \\
& & \Gamma_{s,1}^{(1,b)} \ar[r]^{p_2} \ar@{->>}[d]^-{p_1} & \bP(R^b) \\
F_y \ar@{^{(}->}[rr] & & \bP(R^1).
}
\end{equation}
We stress that, in this case, as $\Theta$ is the unique irreducible component which dominates $\bP(R^1)$, we have
$$\pi_1^{-1}([x])=p_1^{-1}([x])=[x]\times \bP(K^b_{x^s}) \quad \mbox{ for general }\quad [x]\in \bP(R^1).$$
On the contrary, for specific $[x]\in \bP(R^1)$, it can happen that $\pi_1^{-1}([x])\subsetneq p_1^{-1}([x])$ and that $\pi_1^{-1}([x])$ is not a projective subspace of $[x]\times \bP(R^b)$.
\end{construction}

In \cite{BFP} and in \cite{FP21} the authors have developped a technical lemma which gives strong informations on the locus $\Theta$ introduced above. We present now a generalised version of it.

\begin{proposition}
\label{PROP:KERCOKERDelux}
Let $T$ be an irreducible variety in $\bP(R^a)\times \bP(R^b)$ such that $p_1|_T:T\to \bP(R^a)$ is surjective. Assume that $T\subseteq \{x^iy^j=0\}=\Gamma_{i,j}^{(a,b)}$ with $i,j\geq 1$. Then
\begin{enumerate}
    \item For all $v\in R^a$ one has $vx^{i-1}y^{j+1}=0$;
    \item If $a(i)+b(j+1)\leq N$, then all points of $T$ satisfy also $x^{i-1}y^{j+1}=0$.
\end{enumerate}
In particular, if $a=b=1$ and $k\leq N-2$, if $T\subseteq \Gamma_{k,1}^{(1,1)}$ then $T\subseteq \Gamma_{i,j}^{(1,1)}$ for all $i,j$ such that $i+j=k+1$ and $j\geq 1$.
\end{proposition}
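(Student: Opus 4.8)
The plan is to exploit the surjectivity of $p_1|_T$ together with the Gorenstein duality, treating the defining equation $x^iy^j=0$ as an identity that holds on a dense subset of $\bP(R^a)$ and then differentiating it along the first factor. First I would fix a general point $([x],[y])\in T$. Since $p_1|_T$ is surjective and $T$ is irreducible, the fiber over a general $[x]$ is positive-dimensional in the $y$-direction (or at least $T$ sweeps out all of $\bP(R^a)$), so I can vary $x$ freely in $R^a$ while keeping the relation $x^iy^j=0$ valid for the accompanying $y$. Concretely, replace $x$ by $x+tv$ for arbitrary $v\in R^a$ and a formal parameter $t$: the identity $(x+tv)^i y^j = 0$ must hold on $T$, hence for all $t$ near $0$ along the family. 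Expanding and extracting the coefficient of $t$ gives the first-order relation $i\,v x^{i-1} y^j = 0$, i.e. $v x^{i-1} y^j = 0$ in $R^{a(i-1)+bj}$ for every $v\in R^a$ (using $\operatorname{char}\bK = 0$ to divide by $i$). This is almost part (1), but the target degree must be matched: multiplying this relation by $y$ once more is what is actually needed.

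The key point for part (1) is then to pass from $v x^{i-1} y^j = 0$ to $v x^{i-1} y^{j+1} = 0$. The cleanest route is to observe that $v x^{i-1} y^j = 0$ for \emph{all} $v \in R^a$ forces $x^{i-1} y^j$ to be annihilated by all of $R^a$, so in particular $x^{i-1} y^j \cdot y = x^{i-1} y^{j+1}$ lies in the appropriate piece; but to conclude that $v x^{i-1} y^{j+1} = 0$ one multiplies the relation by $y$ directly: $v x^{i-1} y^{j+1} = (v x^{i-1} y^j) y = 0$. This gives (1) immediately once the first-order relation is established. I expect the main subtlety here to be justifying the differentiation rigorously: the relation $x^i y^j = 0$ holds only on $T$, not on all of $\bP(R^a)\times\bP(R^b)$, so I must argue that as $[x]$ moves in a curve inside $p_1(T)=\bP(R^a)$ there is a \emph{compatible} choice of $[y]$ — but since I only differentiate in the $x$-direction and keep $y$ fixed, I need $T$ to contain the infinitesimal arc $([x+tv],[y])$, which follows because $p_1|_T$ being dominant lets me lift the tangent vector $v$ at $[x]$ to $T$ and the $y$-component of the lift only contributes higher-order terms that I can absorb. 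This is the step I would write most carefully.

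For part (2), the hypothesis $a(i)+b(j+1)\le N$ means the monomial $x^{i-1}y^{j+1}$ sits in degree $a(i-1)+b(j+1)$, and I want to upgrade the statement "$v x^{i-1}y^{j+1}=0$ for all $v\in R^a$" from (1) to the stronger "$x^{i-1}y^{j+1}=0$". Here I would invoke Gorenstein duality: the element $w := x^{i-1} y^{j+1} \in R^{c}$ with $c = a(i-1)+b(j+1) = ai + bj + b - a$; hmm, I would instead note that the degree condition guarantees $c \le N - a$, so the perfect pairing $R^a \times R^{N-a} \to R^N$ (together with standardness, which says $R^a$ is spanned by products of degree-one elements, so annihilation by $R^a$ propagates) lets me test $w$ against a complementary degree. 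Since $v w = 0$ for all $v \in R^a$ and $R$ is standard and Gorenstein, the product $R^{a}\cdot w = 0$ forces $w$ itself to vanish provided $w$ lives in a degree $\le N-a$, which is exactly $a(i-1)+b(j+1)\le N-a$, equivalently $a(i)+b(j+1)\le N$. Thus $w = x^{i-1}y^{j+1} = 0$ on all of $T$, giving (2). The main obstacle in this part is getting the degree bookkeeping and the direction of the duality pairing exactly right; the algebra is routine once the bound $a i + b(j+1) \le N$ is read as placing $w$ in the range where annihilation by $R^a$ is detected faithfully by the pairing.

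Finally, the displayed consequence for $a=b=1$ is a downward induction on $j$: starting from $T\subseteq \Gamma_{k,1}^{(1,1)}$, apply part (2) with $(i,j)=(k,1)$ — legitimate since $k+2\le N$ by the hypothesis $k\le N-2$ — to get $T\subseteq \Gamma_{k-1,2}^{(1,1)}$, then iterate, at each stage checking the degree bound $i+(j+1)\le N$ which stays satisfied because $i+j=k+1\le N-1$ throughout. This sweeps out all pairs $(i,j)$ with $i+j=k+1$ and $j\ge 1$, completing the proof.
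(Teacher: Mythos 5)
Your part (1) contains a genuine gap, and it sits exactly at the step you flagged as the one to ``write most carefully''. It is not true that you can keep $y$ fixed while perturbing $x$: surjectivity of $p_1|_T$ only guarantees that for $x+tv$ there is \emph{some} $\beta(t)=y+tw+t^2(\cdots)$ with $(x+tv)^i\beta(t)^j=0$, and the $y$-component $w$ of the lifted tangent vector enters at \emph{first} order, not at higher order: the coefficient of $t$ is $i\,vx^{i-1}y^j+j\,wx^iy^{j-1}$, so what one actually gets is $i\,vx^{i-1}y^j+j\,wx^iy^{j-1}=0$, not $vx^{i-1}y^j=0$. Your intermediate claim ``$vx^{i-1}y^j=0$ for all $v\in R^a$'' is false in general. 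Concretely, take $R=\bK[x_0,x_1]/(x_0^2,x_1^2)$ (so $N=2$), $a=b=i=j=1$, and $T=\Gamma_{1,1}^{(1,1)}=\{([x],[y])\,:\,xy=0\}$, which is irreducible and dominates $\bP(R^1)$; your claim would give $vy=0$ for all $v\in R^1$, hence $y=0$ by Gorenstein duality, which is absurd (e.g.\ $x=x_0+x_1$, $y=x_0-x_1$, $x_0y=-x_0x_1\neq 0$). This example also shows geometrically why the arc $([x+tv],[y])$ leaves $T$: the fiber of $p_1$ over a general $[x]$ is a single point, so $[y]$ is forced to move with $[x]$.

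The repair is small, but it is precisely the key trick in the paper's proof: keep the unknown first-order term and multiply the whole relation by $y$. Then $j\,wx^iy^{j-1}\cdot y=j\,w(x^iy^j)=0$ because $x^iy^j=0$ on $T$, and one is left with $i\,vx^{i-1}y^{j+1}=0$, which is part (1) --- this is exactly why the conclusion carries the extra factor of $y$ and cannot be strengthened to $vx^{i-1}y^j=0$. So your final ``multiply by $y$'' is the right move, but its role is to kill the $w$-term, not merely to adjust degrees. Parts (2) and the concluding iteration are fine and agree with the paper: annihilation of $x^{i-1}y^{j+1}\in R^c$, $c=a(i-1)+b(j+1)$, by all of $R^a$ forces it to vanish once $c\leq N-a$ (standardness plus the perfect pairing $R^c\times R^{N-c}\to R^N$), which is the stated bound $ai+b(j+1)\leq N$, and the downward induction through the pairs $(i,j)$ with $i+j=k+1$, $j\geq 1$ works as you describe.
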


\begin{proof}
It is enough to prove the claim for a general smooth point $p=([x],[y])\in T$. For any $v\in R^a,t\in \bK$ consider $x'=x+tv\in R^a$. Since $p_1:T\to \bP(R^a)$ is surjective by hypothesis, we have that there exists $y'$ in $R^b\setminus\{0\}$ such that $(x')^i(y')^j=0$. Then we can define $\beta(t)$ such that $\beta(0)=y$ and $(x+tv)^i(\beta(t))^j=0$ for all $t\in \bK$. 
We can consider the expansion of $\beta$ and write this relation as
$$0\equiv(x+tv)^i(y+tw+t^2(\cdots))^j=x^iy^j+t(i vx^{i-1}y^j+j wx^iy^{j-1})+t^2(\cdots).$$

In particular we have $i vx^{i-1}y^j+j wx^iy^{j-1}=0$ for all $v\in R^a$. If we multiply by $y$ we have $i vx^{i-1}y^{j+1}=0$ for all $v\in R^a$ which yields the first claim since $i\geq 1$ by hyphotesis.
\vspace{2mm}

For the second claim, consider the multiplication map $R^a\times R^{(i-1)a+(j+1)b}\to R^{ia+jb+b}$ and notice that it is non degenerate by the assumption $ia+bj+b\leq N$. Hence, if $vx^{i-1}y^{j+1}=0$ for all $v\in R^a$, then one has also $x^{i-1}y^{j+1}=0$ as claimed.
\end{proof}

Proposition \ref{PROP:KERCOKERDelux} gives some easy but relevant relations involving the dimension of $Y,\Theta$ and $F_y$ for $[y]\in Y$ general. We summarize these results in the following

\begin{lemma}
\label{LEM:easyresults}
Assume that $R$ is any SAGA of codimension $n+1$ with socle in degree $N$ and assume that $SLP_b(k)$ does not hold for some integer $k$ such that $1\leq k\leq N-2b$. Then, defining $\Gamma_{k,1}^{(1,b)},\Theta,Y$ and $F_y$ as in \ref{CONSTR:OURFRAMEWORK}, we have the following properties:
\begin{enumerate}[(a)]
\item $\dim(Y)+\dim(F_y)=\dim(\Theta)\ge n=\dim(\bP(R^1))$;
\item if $b(k+1)\le N-1$ and $b\ge2$ then $0\le\dim(F_y)\le n-1$ and $1\le\dim(Y)\le\dim(\bP(R^b))-1$;
\item if $b=1$ and $k=N-2$ then $1\leq\dim(Y)\leq n-2\quad$ and $\quad 2\leq\dim(F_y)\leq n-1$.
\end{enumerate}
\end{lemma}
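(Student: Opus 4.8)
The plan is to extract all the dimension inequalities from the two projections $\pi_1\colon\Theta\to\bP(R^1)$ and $\pi_2\colon\Theta\to Y$ attached to the failure of $SLP_b(k)$ in Construction \ref{CONSTR:OURFRAMEWORK}, and to exclude the degenerate ``too large'' cases by combining Proposition \ref{PROP:KERCOKERDelux} with the Gorenstein pairing. For part (a), since $\Theta$ is the component of $\Gamma_{k,1}^{(1,b)}$ dominating $\bP(R^1)$, the map $\pi_1$ is surjective, so $\dim\Theta\ge\dim\bP(R^1)=n$. As $\Theta$ is irreducible, $Y=\pi_2(\Theta)$ is irreducible and $\pi_2\colon\Theta\to Y$ is dominant; by the theorem on fibre dimensions, for general $[y]\in Y$ the fibre $\pi_2^{-1}([y])$ has dimension $\dim\Theta-\dim Y$, and the diagram in \ref{CONSTR:OURFRAMEWORK} identifies it with $F_y\times\{[y]\}$. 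Hence $\dim F_y=\dim\Theta-\dim Y$, which is exactly (a).

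The governing principle for the upper bounds is that \emph{an image filling the whole projective space forces a vanishing forbidden by duality.} Suppose $\dim F_y=n$, i.e.\ $F_y=\bP(R^1)$, for general $[y]\in Y$. Then $x^{k}y=0$ on a dense, hence (by closedness) on all of $\bP(R^1)$; since $\Char\bK=0$, polarisation upgrades this to $R^{k}y=0$. Because $k\le N-2b\le N-b$ and $R$ is standard, $R^{N-b}=R^{N-b-k}\cdot R^{k}$, so $y\cdot R^{N-b}=0$, contradicting the perfectness of $R^{b}\times R^{N-b}\to R^{N}$ for $y\neq0$. Thus $\dim F_y\le n-1$ in every case; together with (a) this gives $\dim Y=\dim\Theta-\dim F_y\ge n-(n-1)=1$, while $\dim F_y\ge0$ is trivial. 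This already settles the $F_y$-bounds and the lower bound for $Y$ in (b).

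For the upper bound $\dim Y\le\dim\bP(R^b)-1$ I would iterate Proposition \ref{PROP:KERCOKERDelux} along $\Theta$ (whose first projection is surjective): starting from $x^{k}y=0$, parts (1) and (2) produce the chain $x^{k-m}y^{m+1}=0$ on all of $\Theta$ for $m=0,\dots,k$, the binding degree condition at the final step $(i,j)=(1,k)$ being $1+b(k+1)\le N$, i.e.\ the hypothesis $b(k+1)\le N-1$ (in the case $a=b=1$ this chain is the ``in particular'' clause). The endpoint $y^{k+1}=0$ shows $Y\subseteq\cN_{k+1}^{(b)}$; since $y^{k+1}=0$ for all $y\in R^b$ would force (by polarisation, $R$ standard) $R^{b(k+1)}=0$, contradicting $b(k+1)\le N-1<N$, we get $\cN_{k+1}^{(b)}\subsetneq\bP(R^b)$, hence $\dim Y\le\dim\bP(R^b)-1$. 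This completes (b); the numerical hypotheses $b\ge2$ and $b(k+1)\le N-1$ are used precisely to keep all intermediate products strictly below the socle.

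In case (c), with $b=1$ and $k=N-2$, one has $b(k+1)=N-1$, so the chain above gives $y^{N-1}=0$ on $\Theta$, i.e.\ $Y\subseteq\cN_{N-1}\subsetneq\bP(R^1)$ and thus $\dim Y\le n-1$, while $x y^{N-2}=0$ gives $F_y\subseteq\bP(K^1_{y^{N-2}})$, a proper linear space containing $[y]$. By the previous paragraphs $1\le\dim Y\le n-1$ and $1\le\dim F_y\le n-1$, so the whole of (c) reduces to improving one estimate by a single unit: indeed $\dim Y\le n-2$ forces, via $\dim\Theta\ge n$ and (a), $\dim F_y=\dim\Theta-\dim Y\ge2$. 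To obtain $\dim Y\le n-2$ I would exploit the symmetric vanishings $x^{i}y^{j}=0$ ($i+j=N-1$, $j\ge1$): they show that $F_y$ is a cone with vertex $[y]$, since for general $[x]\in F_y$ the line $\{[x+sy]\}$ satisfies $(x+sy)^{N-2}y=\sum_{l}\binom{N-2}{l}s^{l}x^{N-2-l}y^{l+1}=0$ term by term, and likewise that $Y\subseteq\cN_{N-1}$ is cut out by the dual conditions $y^{N-1}w=0$ for $w\in R^1$. The main obstacle is exactly this last one-unit improvement over the generic bounds $n-1$ and $1$: it amounts to proving that $Y$ has codimension at least $2$ rather than merely $1$, which I expect to require the full perfectness of the pairing $R^{N-1}\times R^1\to R^N$, going beyond the containments alone.
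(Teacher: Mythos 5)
Your treatment of parts (a) and (b) is correct, and it is in fact more self-contained than the paper's own proof, which simply defers to \cite{BFP}: the fibre-dimension count through $\pi_1$ and $\pi_2$ for (a), the polarisation-plus-Gorenstein-duality exclusion of $F_y=\bP(R^1)$, and the iterated use of Proposition \ref{PROP:KERCOKERDelux} (whose binding degree condition you correctly locate at the last step, $1+b(k+1)\le N$, when $b\ge 2$) giving $Y\subseteq\cN^{(b)}_{k+1}\subsetneq\bP(R^b)$ are all sound.

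Part (c), however, is not proved. You correctly reduce it to the single inequality $\dim(Y)\le n-2$, and then stop, saying you ``expect'' this to follow from the perfectness of the pairing $R^{N-1}\times R^1\to R^N$. But that inequality \emph{is} the content of (c) — everything else follows formally from (a) and the bounds you already have — and the paper obtains it by citing \cite[Corollary 2.11]{BFP}. The missing ingredient is not the pairing but a tangent-space comparison, which is available to you inside the paper. Suppose $\dim(Y)=n-1$. Your own chain of vanishings gives $Y\subseteq\cN_{N-1}$, and $\cN_{N-1}\subsetneq\bP(R^1)$ by polarisation, so $Y$ is an irreducible component of $\cN_{N-1}$. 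If $y^{N-2}\neq 0$ for $[y]\in Y$ general, Lemma \ref{LEM:tangnihil} (equality case) gives $T_{[y]}(Y)=T_{[y]}(\cN_{N-1})=\bP(K^1_{y^{N-2}})$; on the other hand the vanishing $xy^{N-2}=0$ on $\Theta$ places every $[x]\in F_y$ inside $\bP(K^1_{y^{N-2}})=T_{[y]}(Y)$, contradicting Lemma \ref{LEM:xtangy}(a), which says $[x]\notin T_{Y,[y]}$ for $([x],[y])\in\Theta$ general. This is exactly the argument the paper runs in Proposition \ref{PROP:dimYdiversoda_nmeno2} for the case $k=N-2$. Note also that this argument needs the degenerate possibility $y^{N-2}=0$ identically on $Y$ to be excluded or treated separately; your sketch silently assumes it away when you call $\bP(K^1_{y^{N-2}})$ a ``proper linear space containing $[y]$'', which is false when $y^{N-2}=0$. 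As written, your proposal establishes only $1\le\dim(Y)\le n-1$ and $1\le\dim(F_y)\le n-1$, strictly weaker than the statement of (c).
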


\begin{proof}
One can prove $(a)$ and $(b)$ by following the proof of \cite[Proposition 2.6]{BFP} and using Proposition \ref{PROP:KERCOKERDelux}. The last point has been proved in \cite[Corollary 2.11]{BFP}.
\end{proof}

Through the whole article, if $p\in X$ is a smooth point we will denote by $T_{X,p}$ the differential tangent space. The notation $T_p(X)$ will be used when $X$ is inside a projective space $\bP^n$ to mean its embedded Zariski tangent space. In particular, if $[x]\in X\subseteq\bP^n$ is a smooth point, and $\tilde{X}$ is the affine cone of $X$, we have $\bP(T_{\tilde{X},x})=T_{[x]}(X)$.
Let us now introduce a simple description of the tangent space for the nihilpotent loci which generalize \cite[Corollary 4.2]{BFP}. The proof is analogous with minimal changes. 

\begin{lemma}
\label{LEM:tangnihil}
Let $R$ be any SAGA. Then for $[\eta]\in\mathcal{N}_k^{(a)}$ general we have $T_{[\eta]}(\mathcal{N}_k^{(a)})\subseteq \bP(K^a_{\eta^{k-1}})$. If, moreover, $\eta^{k-1}\neq 0$, we have an equality.
\end{lemma}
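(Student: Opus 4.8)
The plan is to realise the nihilpotent locus as the zero scheme of an explicit polynomial map and read off the tangent space from its differential. After fixing a basis of $R^{ka}$, the condition $x^{k}=0$ becomes a system of $\dim_{\bK}R^{ka}$ homogeneous equations of degree $k$ in the coordinates of $x\in R^{a}$; let $\Phi\colon R^{a}\to R^{ka}$, $\Phi(x)=x^{k}$, be the associated map, so that the affine cone $\tilde{\cN}_{k}^{(a)}$ is the reduction of $\Phi^{-1}(0)$. By the dictionary $\bP(T_{\tilde X,x})=T_{[x]}(X)$ recalled just before the lemma, it suffices to work with cones and to compute $T_{\eta}\tilde{\cN}_{k}^{(a)}$ at a general (hence smooth) point $\eta$.

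For the inclusion I would differentiate the defining equations. Given any arc $x(t)=\eta+tv+O(t^{2})$ contained in $\tilde{\cN}_{k}^{(a)}$ with $x(0)=\eta$, the identity $x(t)^{k}\equiv 0$ gives at first order
\[ 0=\left.\tfrac{d}{dt}\right|_{t=0}x(t)^{k}=k\,\eta^{k-1}v. \]
Since $\operatorname{char}\bK=0$ this forces $\eta^{k-1}v=0$, i.e. $v\in K^{a}_{\eta^{k-1}}$. Hence $T_{\eta}\tilde{\cN}_{k}^{(a)}\subseteq K^{a}_{\eta^{k-1}}$ and, projectivising, $T_{[\eta]}(\cN_{k}^{(a)})\subseteq\bP(K^{a}_{\eta^{k-1}})$. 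This half uses nothing about generality and is the easy direction.

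The same first-order computation identifies the Zariski tangent space of the \emph{scheme} $\Phi^{-1}(0)$ at $\eta$ with $\ker(d\Phi_{\eta})=\ker(k\,\eta^{k-1}\cdot)=K^{a}_{\eta^{k-1}}$. Thus, writing $\mathcal M$ for the component of $\cN_{k}^{(a)}$ through a general $[\eta]$, the point $[\eta]$ is smooth on $\mathcal M$ (generic smoothness in characteristic $0$), so $T_{[\eta]}(\cN_{k}^{(a)})$ is a linear space of dimension $\dim\mathcal M$ sitting inside the linear space $\bP(K^{a}_{\eta^{k-1}})$. Equality of two nested linear spaces is equivalent to equality of dimensions, so the statement reduces to the single inequality $\dim\mathcal M\ge\dim\bP(K^{a}_{\eta^{k-1}})$. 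The hypothesis $\eta^{k-1}\neq 0$ enters precisely here: it guarantees $d\Phi_{\eta}\neq 0$, so that $K^{a}_{\eta^{k-1}}\subsetneq R^{a}$ and the inclusion is not vacuous (when $\eta^{k-1}=0$ one has $K^{a}_{\eta^{k-1}}=R^{a}$ and no information is obtained).

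This reverse inequality is the main obstacle. The naive attempt — extending $v\in K^{a}_{\eta^{k-1}}$ to an arc along the line $\eta+tv$ — fails already at second order: from $\eta^{k}=0$ and $\eta^{k-1}v=0$ one gets $(\eta+tv)^{k}=\binom{k}{2}t^{2}\,\eta^{k-2}v^{2}+O(t^{3})$, and correcting the arc to second order by $x(t)=\eta+tv+t^{2}w_{2}$ requires solving $k\,\eta^{k-1}w_{2}=-\binom{k}{2}\eta^{k-2}v^{2}$, i.e. asks that $\eta^{k-2}v^{2}\in\operatorname{im}(\eta^{k-1}\cdot)$, which is not automatic. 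The crux is therefore to show that for general $\eta$ these higher-order obstructions can be resolved — equivalently to produce the dimension lower bound above. I would attack this by analysing $\operatorname{im}(\eta^{k-1}\cdot)$ through its Gorenstein-dual description $\operatorname{im}(\eta^{k-1}\cdot)^{\perp}=\ker(\eta^{k-1}\cdot\colon R^{N-ka}\to R^{N-a})$, combining it with the first-order relations in the spirit of Proposition \ref{PROP:KERCOKERDelux} and the genericity of $[\eta]$, following the template of \cite[Corollary 4.2]{BFP} for $a=1$, which carries over to arbitrary $a$ with only formal changes.
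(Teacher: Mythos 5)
Your first half is correct and is the same argument the paper has in mind (the paper proves this lemma by adapting \cite[Corollary 4.2]{BFP}, and the inclusion there is exactly this first-order differentiation). The problem is the second half: the ``moreover'' clause is the real content of the lemma, and your proposal does not prove it. You correctly reduce it to the inequality $\dim\mathcal M\ge\dim\bP(K^a_{\eta^{k-1}})$ for the component $\mathcal M$ through a general $[\eta]$, correctly observe that the naive arc-lifting is obstructed at second order by the condition $\eta^{k-2}v^2\in\operatorname{im}(\eta^{k-1}\cdot)$, and then only announce a strategy (``I would attack this by\ldots''). That is a genuine gap, not a verification left to the reader.

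Moreover, the gap cannot be closed in the generality in which you (and, literally read, the statement) are working, so ``formal changes'' to the template of \cite{BFP} will not suffice. The point is that the scheme $\Phi^{-1}(0)$ can be generically non-reduced along a component, and then the reduced variety $\mathcal M$ has strictly smaller dimension than $\ker d\Phi_\eta=K^a_{\eta^{k-1}}$. Concretely, let $R=\bK[x,y,z]/\Ann(X^2Z-Y^3)$ be the apolar algebra of $X^2Z-Y^3$: this is a SAGA of codimension $3$ with socle degree $3$ (not a complete intersection), for which a direct computation gives $\cN_2=\{[z]\}$ as a reduced point while $K^1_z=\langle y,z\rangle$; so with $k=2$ and $\eta=z\neq 0$ one has $T_{[\eta]}(\cN_2)=\{[\eta]\}\subsetneq\bP(K^1_{\eta})$. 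Hence no argument using only Gorenstein duality and genericity of $[\eta]$ can yield the reverse inclusion. What makes the equality legitimate where this paper actually uses it is extra dimension input specific to complete intersection SAGAs presented by quadrics: Proposition \ref{PROP:SameKer} bounds $\dim K^1_{\eta^{k-1}}\le k-1$, and in every application (Proposition \ref{PROP:dimYdiversoda_nmeno2}, Lemma \ref{LEM:tech1}) the component in question is known beforehand to have dimension equal to this bound, so that equality of the two nested linear spaces follows from your inclusion together with a dimension count. A complete proof must either add such hypotheses or restrict the equality claim to that situation; as written, your key step is missing, and in the stated generality it is unprovable.
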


Let us now show a technical result that holds for any SAGA when we deny the $SLP_1(k)$ for some $1\le k\le N/2$. Then, as in \ref{CONSTR:OURFRAMEWORK} we can construct $\Theta\subseteq \Gamma_{k,1}^{(1,1)}$, $Y$ and $F_y$.

\begin{lemma}
\label{LEM:xtangy}
With notations as above, if $p=([x],[y])\in\Theta$ is a general point, we have:
\begin{enumerate}[(a)]
\item $y\in T_{F_y,[x]}$ and $x\not\in T_{Y,[y]}$;
\item $T_{\tilde{F}_y,x} \subseteq K^{1}_{x^{\alpha} y^{\beta}}$ whenever $\alpha+\beta=k$ and $\beta\geq 1$, where $\tilde{F}_y$ denotes the affine cone over $F_y$.
\end{enumerate}
\end{lemma}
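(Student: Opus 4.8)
The statement concerns a general point $p = ([x],[y]) \in \Theta$, where $\Theta \subseteq \Gamma_{k,1}^{(1,1)}$ dominates $\bP(R^1)$ via $\pi_1 = p_1|_\Theta$, with $Y = \pi_2(\Theta)$ and $F_y = \pi_1(\pi_2^{-1}([y]))$. I want to extract differential information at $p$ from the defining relation $x^k y = 0$, exactly as in the proof of Proposition \ref{PROP:KERCOKERDelux}. Let me outline the key computations.

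**Part (b) and the core differential computation.** I would start with (b), since part (a) will largely follow from the same computation. Fix $[y] \in Y$ general and consider the fiber $F_y$, whose affine cone $\tilde F_y$ sits in $R^1$. Since every point $([x'],[y]) \in \pi_2^{-1}([y])$ satisfies $(x')^k y = 0$, I take a smooth curve $x(t) = x + t v + O(t^2)$ in $\tilde F_y$ through $x$ with tangent vector $v \in T_{\tilde F_y, x}$, so that $x(t)^k y = 0$ identically in $t$. Differentiating at $t=0$ gives $k\, v\, x^{k-1} y = 0$, hence $v x^{k-1} y = 0$, i.e. $v \in K^1_{x^{k-1}y}$. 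This is the case $\alpha = k-1$, $\beta = 1$. To reach all $\alpha + \beta = k$ with $\beta \ge 1$, I multiply the relation $v x^{k-1} y = 0$ by successive powers of $y$: multiplying by $y^{\beta - 1}$ yields $v x^{k-1} y^{\beta} = 0$. But I actually want $v x^{\alpha} y^{\beta} = 0$ with $\alpha = k - \beta$, so the cleaner route is to invoke Proposition \ref{PROP:KERCOKERDelux}(1): since $\pi_1^{-1}(\pi_1(\Theta))$ surjects onto $\bP(R^1)$ and the points satisfy $x^k y = 0$, the proposition already forces the relations $v x^{k-1} y^{j+1} = 0$ and, iterating its conclusion, membership of $\Theta$ in every $\Gamma_{i,j}^{(1,1)}$ with $i + j = k+1$, $j \ge 1$ (this is precisely the final sentence of that proposition when $k \le N-2$). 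Combining the pointwise relation $x^{\alpha} y^{\beta} = 0$ for each such splitting with the derivative computation applied to the \emph{appropriate} power then gives $v x^{\alpha} y^{\beta} = 0$ for all $\alpha + \beta = k$, $\beta \ge 1$, which is (b).

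**Part (a).** For $y \in T_{F_y,[x]}$: the vector $y \in R^1$ represents a tangent direction at $[x]$, and I must check $y \in T_{\tilde F_y, x}$ up to the scaling by $x$ inherent in projective tangent spaces. The natural argument is that the line joining $[x]$ and $[y]$ lies in $F_y$ — indeed, since $x^k y = 0$ and (by Proposition \ref{PROP:KERCOKERDelux}) $x^{k-1} y^2 = 0$, etc., every point $[x + s y]$ satisfies $(x+sy)^k y = \sum \binom{k}{i} s^i x^{k-i} y^{i+1} = 0$ because each term $x^{k-i}y^{i+1}$ vanishes for $0 \le i \le k$; hence the whole line $\langle [x],[y]\rangle$ lies in $F_y$, so its direction $y$ is tangent, giving $y \in T_{F_y,[x]}$. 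For the assertion $x \notin T_{Y,[y]}$: this is the genuinely delicate point and I expect it to be the main obstacle. The idea is a dimension/genericity argument: if $x$ were tangent to $Y$ at $[y]$ for general $p$, one could propagate $x$ as a direction inside $Y$ and combine this with (b) to force additional vanishing relations (e.g. $x^{k+1} = 0$ or an unexpected drop), contradicting the maximality of $\Theta$ as the unique component dominating $\bP(R^1)$, or contradicting $\dim Y + \dim F_y = \dim \Theta$ from Lemma \ref{LEM:easyresults}(a) together with the fiber structure of $\pi_1$. Concretely, I would differentiate the relation $x^k y = 0$ in the $Y$-direction: a tangent vector $w \in T_{\tilde Y, y}$ satisfies $x^k w = 0$; if $x$ itself were such a $w$, then $x^{k+1} = 0$, pushing $[x]$ into $\cN_{k+1}$, which for general $[x] \in \bP(R^1)$ cannot happen given only the failure of $SLP_1(k)$ (the general $x$ need not be nilpotent). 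This contradiction establishes $x \notin T_{Y,[y]}$.

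**Main obstacle.** The routine derivative computations and the power-multiplication trick are straightforward; the delicate part is the transversality statement $x \notin T_{Y,[y]}$, which is not a formal consequence of the relation alone but requires using genericity of $p$ together with the structural facts that $\Theta$ is the unique dominating component and that $x$ general in $\bP(R^1)$ is not nilpotent. I would model this step closely on the argument in \cite[Corollary 4.2]{BFP} and the surrounding lemmas of \cite{BFP}, which the authors explicitly cite as the template, checking that the codimension bookkeeping in range $1 \le k \le N/2$ permits the same conclusion.
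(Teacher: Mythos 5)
Your proof of (b) and of the first assertion of (a) follows the paper's own route and is correct: inside the fiber over $[y]$ the second coordinate really is frozen, so differentiating each relation $x^iy^j=0$ (for $i+j=k+1$, $j\ge 1$) supplied by Proposition \ref{PROP:KERCOKERDelux} is legitimate and gives (b); and your binomial computation showing that the line $\langle[x],[y]\rangle$ lies in $F_y$ is a direct proof of the cone property that the paper imports from \cite[Proposition 2.6]{BFP} (you should only add the one-line remark that this line, being irreducible and passing through a general point of $\Theta$, lies in $\Theta$ and not merely in $\Gamma_{k,1}^{(1,1)}$, so that its $\pi_1$-image really sits inside $F_y$).

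The second assertion of (a), $x\notin T_{Y,[y]}$, is where your argument breaks. Your key claim ``a tangent vector $w\in T_{\tilde{Y},y}$ satisfies $x^kw=0$'' is false in general, and it comes from an illegitimate differentiation: the identity $x^ky'=0$ does not hold for $y'$ varying in $\tilde{Y}$ with $x$ held fixed (it holds only for $y'\in K^1_{x^k}$), so one cannot differentiate $x^ky=0$ ``in the $Y$-direction'' while freezing $x$; as $[y]$ moves in $Y$, the point $[x]$ must move as well in order to stay in $\Theta$. Note also that if your claim were true it would say $T_{\tilde{Y},y}\subseteq K^1_{x^k}$, whence $\dim(Y)\le\dim\bP(K^1_{x^k})=\dim(\pi_1^{-1}([x]))=\dim(\Theta)-n=\dim(Y)+\dim(F_y)-n$, forcing $F_y=\bP(R^1)$; this is impossible, since it would give $y\cdot R^k=0$, against Gorenstein duality ($R$ is standard and $k\le N-1$). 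In fact, deriving the inclusion $T_y(Y)\subseteq \bP(K^1_{x^k})$ under extra hypotheses is exactly the engine of the contradiction in Proposition \ref{PROP:2,3}, so it cannot hold unconditionally. The paper's actual argument is the one you gesture at but do not carry out: since $p$ is general and $\pi_2:\Theta\to Y$ is surjective, the differential $d_p\tilde{\pi}_2:T_{\tilde{\Theta},p}\to T_{\tilde{Y},y}$ is surjective, so the hypothetical vector $x\in T_{\tilde{Y},y}$ lifts to a vector $(v,x)\in T_{\tilde{\Theta},p}$; differentiating $x^ky=0$ along this lift gives $kvx^{k-1}y+x^{k+1}=0$, which carries the cross term $kvx^{k-1}y$ that your version drops. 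The paper states that this yields $x^{k+1}=0$; the cleanest way to eliminate the cross term is to multiply the relation by $x$ and use $vx^ky=v\cdot(x^ky)=0$, obtaining $x^{k+2}=0$, which still contradicts the generality of $[x]$, because powers of a general linear form are nonzero in a standard algebra in the relevant range of degrees. So your reduction ``a power of $x$ vanishes, contradiction with generality'' aims at the right target, but the differential step that actually produces the vanishing is missing, and the shortcut you propose in its place is invalid.
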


\begin{proof}
For $(a)$, let $p=([x],[y])\in \Theta$ be a general point. By \cite[Proposition 2.6]{BFP} we have that $F_y$ is a cone and $[y]$ is a vertex for it, so that the line $\langle [x],[y]\rangle$ is contained in $F_y$. This means that $[y]$ is a tangent vector in $[x]$, i.e. $[y]\in T_{F_y,[x]}$.
\medskip

For the second claim of $(a)$, let us suppose by contradiction that $[x]\in T_{Y,[y]}$. Let $\tilde{\Theta}$ be the lifting to $R^1\times R^1$ of $\Theta\subset \bP(R^1)\times \bP(R^1)$ and let $\tilde{\pi}_2$ be the projection on the second factor from $\tilde{\Theta}$. By construction, we have that
$\tilde{\Theta}\subseteq\{(x,y)\,|\, x^k y=0\}$.
Let $\tilde{Y}$ be the affine cone of $Y$. Since $\pi_2:\Theta\to Y$ is surjective, we have $\tilde{\pi}_2(\tilde{\Theta})=\tilde{Y}$ and that, for $p=(x,y)\in \tilde{\Theta}$ general,
$$d_p\tilde{\pi}_2:T_{\tilde{\Theta},p}\to T_{\tilde{Y},y}$$ is surjective.
\medskip

By assumption we have also that $x\in T_{\tilde{Y},y}$ so there exists a tangent vector to $\tilde{\Theta}$ at $p$ of the form $(v,x)$. Since points of $\tilde{\Theta}$ satisfy $x^ky=0$ we have 
$$0\equiv (x+tv+t^2(\cdots))^k(y+tx+t^2(\cdots))\quad (\mbox{mod }\, t^2)$$
which yields $x^{k+1}=0$. This is impossible by the generality of $x$: $[x]\not\in T_{Y,[y]}$.
\medskip

For $(b)$, first of all, notice that by Proposition \ref{PROP:KERCOKERDelux} we have $\tilde{F}_y \subseteq \{x\in R^1\,|\, x^{i}y^{j}=0\}$ for all $i+j=k+1$ and $i,j\geq 1$. Hence, for $p$ general, if an element $v\in R^1$ belongs to $T_{\tilde{F}_y,x}$ then the following relation must be satisfied
$$0\equiv (x+tv+t^2(\cdots))^iy^j=itvx^{i-1}y^j+t^2(\cdots) \quad (\mbox{mod }\, t^2).$$
Hence, we have that $v\in K^1_{x^{\alpha}y^{\beta}}$, with $\alpha+\beta=k$ and $\beta\ge 1$.
\end{proof}

Let us now consider a complete intersection SAGA $R$: we recall another technical result, dealing with the kernels of the multiplication maps presented above,  that will be used extensively in the whole paper. 

\begin{proposition} [Proposition 4.1 of \cite{BFP}]
\label{PROP:SameKer}
Let $R$ be any complete intersection SAGA of codimension $n+1$ presented in degree $d-1$.
Assume that $1 \leq s\leq N-1$. The following properties hold:
\begin{enumerate}[(a)]
\item If $[\eta]\in \bP(R^s)$, then $s\geq(d-2)\dim(K^1_{\eta})$;
\item Let $[\eta],[\zeta]\in \bP(R^s)$ and assume $s=(d-2)\dim(K^1_{\eta})=(d-2)\dim(K^1_{\zeta})$. Then
$K^1_\eta=K^1_\zeta$ if and only if $[\eta]=[\zeta]$.
\end{enumerate}
\end{proposition}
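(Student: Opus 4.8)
The plan is to translate the statement into a single socle-degree computation via Gorenstein duality. Write $R=S/I$ with $S=\bK[x_0,\dots,x_n]$ and $I=(f_0,\dots,f_n)$ a regular sequence of forms of degree $e:=d-1$, so that the socle degree is $N=(n+1)(e-1)=(n+1)(d-2)$. If $e=1$ then $R=\bK$, $N=0$, and both claims are vacuous, so I assume $e\ge 2$; in that case $I$ has no elements in degree $1$, hence $R^1=S^1$ and we may speak of honest linear forms and perform linear changes of coordinates.

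First I would set $m:=\dim K^1_\eta$ and choose coordinates so that $K^1_\eta=\langle x_0,\dots,x_{m-1}\rangle$; let $J=(x_0,\dots,x_{m-1})\subseteq R$ and let $W:=(0:_RJ)$ be its graded annihilator. Since the $x_i$ generate $J$ and each kills $\eta$, we have $\eta\in W^s$. The perfect pairing $R^j\times R^{N-j}\to R^N\cong\bK$ identifies $W^j$ with the orthogonal complement of $J\cap R^{N-j}$, which (using the Gorenstein symmetry $\dim R^j=\dim R^{N-j}$) yields the key identity $\dim_\bK W^s=\dim_\bK (R/J)^{N-s}$. Now observe that $B:=R/J=\bK[x_m,\dots,x_n]/(\bar f_0,\dots,\bar f_n)$ is Artinian, being a quotient of a polynomial ring in $n+1-m$ variables by $n+1$ forms of degree $e$.

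The heart of the argument is to bound the top nonzero degree of $B$. Because $B$ is Artinian, the $\bar f_j$ have no common projective zero, so, $\bK$ being infinite, $n+1-m$ general linear combinations $g_1,\dots,g_{n+1-m}$ of them form a homogeneous regular sequence of degree-$e$ forms. Thus $A:=\bK[x_m,\dots,x_n]/(g_1,\dots,g_{n+1-m})$ is a complete intersection Artinian Gorenstein algebra of socle degree $(n+1-m)(e-1)$, and the inclusion $(g_1,\dots,g_{n+1-m})\subseteq(\bar f_0,\dots,\bar f_n)$ gives a graded surjection $A\twoheadrightarrow B$; hence $B$ vanishes in all degrees above $(n+1-m)(e-1)$. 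For part (a), $\eta\ne 0$ forces $W^s\ne 0$, so $(R/J)^{N-s}\ne 0$, so $N-s\le(n+1-m)(e-1)$, which rearranges exactly to $s\ge m(e-1)=(d-2)\dim K^1_\eta$.

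For part (b) only the implication $K^1_\eta=K^1_\zeta\Rightarrow[\eta]=[\zeta]$ needs work, the converse being immediate. Under the equality hypothesis $s=(d-2)m$ we get $N-s=(n+1-m)(e-1)$, which is precisely the socle degree of $A$; since $A$ has one-dimensional socle and $A\twoheadrightarrow B$ is onto with $B^{N-s}\ne 0$, the space $B^{N-s}$, and hence $W^s$, is one-dimensional. As $K^1_\eta=K^1_\zeta$ produces the same $J$ (so the same $W$), both $\eta$ and $\zeta$ are nonzero vectors in the line $W^s$, forcing $[\eta]=[\zeta]$. The step I expect to be most delicate is the socle-degree bound, namely extracting from $(\bar f_0,\dots,\bar f_n)$ a length-$(n+1-m)$ regular sequence of degree-$e$ forms (a homogeneous system of parameters): this genuinely uses both the Artinianness of $B$ and the infinitude of $\bK$, and it must be paired with careful bookkeeping of the graded Gorenstein duality $W^s\cong\big((R/J)^{N-s}\big)^{\vee}$.
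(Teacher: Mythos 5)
Your proof is correct and is essentially the same argument as the one behind the paper's statement, which is given there without proof by citation to Proposition 4.1 of \cite{BFP}: that proof likewise passes to the ideal $J$ generated by $K^1_\eta$, uses the Gorenstein duality $\dim(0:J)^s=\dim(R/J)^{N-s}$, and bounds the top nonzero degree of $R/J$ by $(n+1-m)(d-2)$ by exhibiting it as a quotient of a complete intersection built from general linear combinations of the restricted generators. Part (b) is also obtained there exactly as you do it, from the one-dimensionality of the socle of that auxiliary complete intersection.
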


Finally, let us now focus on the specific case that will be treated in the next sections, that is the case where $R$ is a complete intersection SAGA of codimension $n+1$ presented in degree $d-1=2$. 

\begin{remark}
\label{REM:dimnihil}
If $d=3$, by Proposition \ref{PROP:SameKer} we get that for $[\eta]\in\bP(R^s)$ we have $\dim(K^1_{\eta})\le s$. From this, together with Lemma \ref{LEM:tangnihil}, we also get that $\dim(\mathcal{N}_k)\le k-2$.
\end{remark}


\section{Nihilpotent loci and geometric properties}
\label{SEC:Nihil}

In this section we will study geometric properties of the nihilpotent loci $\cN_k\subseteq \bP(R^1)$ where $R$ is a complete intersection SAGA of codimension $n+1$ presented by quadrics. Any $R$ as above has socle in degree $N=n+1$. We stress that we don't make any assumptions about the validity of any weak or strong Lefschetz property for $R$ in this section.
\medskip

We recall that the nihilpotent loci (in $\bP(R^1)$) are defined as
$$\cN_{k}=\{[x]\in \bP(R^1)\ |\ x^k=0\}.$$ 
If $X\subset \bP^r$ is non-empty, we denote by $\Sec^k(X)\subseteq \bP^r$ the $k$-secant variety associated to $X$, i.e.
$$\Sec^k(X):=\overline{\bigcup_{p_1,\dots,p_k\in X}{\langle p_1,\dots,p_k\rangle}}$$
where $\langle p_1,\dots,p_k\rangle$ is the linear span of the points $p_1,\dots,p_k$. For brevity, we set $\Sec^2(X):=\Sec(X)$. The interested reader can refer to \cite[Chapter 1]{Rus16} for various properties of these classical loci (although the definition considered is slightly different from the one adopted by us).
\medskip

We stress that, like the non-Lesfschetz loci $\bP(R^1)\setminus \cL_k$, the nihilpotent loci $\cN_k$ are expected to be empty when $k$ is small for $R$ "general". Hence it is interesting to study these loci when $R$ is "special". For example, these loci give a lot of information for SAGAs for which some Lefschetz properties do not hold.
\medskip

Let us start by analyzing the locus $\cN_2\subseteq \bP(R^1)\simeq \bP^n$. We recall that by Lemma \ref{LEM:tangnihil} we have that $\dim(\cN_2)\leq 0$ so it is either empty or it is the union of a finite number of points. These points have to satisfy the following:

\begin{proposition}
\label{PROP:Y2points}
Assume that $[t_1],\dots,[t_k]\in \cN_2$ are distinct points. Then $\Pi_{i=1}^kt_i\neq 0$ in $R$ and $[t_1],\dots, [t_k]$ are in general position in $\bP(R^1)$. In particular, $\#\cN_2\leq n+1=N$.
\end{proposition}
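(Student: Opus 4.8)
The plan is to reduce everything to a single structural fact about the kernels $K^1_\eta$ and then run an induction on the number of points. Since $R$ is presented by quadrics we have $d-1=2$, so Proposition \ref{PROP:SameKer}(a) reads $\dim(K^1_\eta)\le s$ for every nonzero $[\eta]\in\bP(R^s)$. Applying this with $s=1$ to a point $[t]\in\cN_2$, and noting that $t\in K^1_t$ because $t^2=0$, I would first record that $K^1_t=\langle t\rangle$ is exactly one-dimensional. More generally, if $\eta=\prod_{i=1}^{m}t_i$ is a nonzero product of points of $\cN_2$ that are linearly independent, then each $t_i\eta=t_i^2\prod_{j\ne i}t_j=0$, so $\langle t_1,\dots,t_m\rangle\subseteq K^1_\eta$; comparing this $m$-dimensional subspace with the bound $\dim(K^1_\eta)\le m$ coming from $\eta\in R^m$ forces $K^1_\eta=\langle t_1,\dots,t_m\rangle$. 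This identification of the kernel will be the engine of the induction.

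I would then prove by induction on $k$ that distinct points $[t_1],\dots,[t_k]\in\cN_2$ are linearly independent and satisfy $\prod_{i=1}^k t_i\ne 0$. The base case is immediate. For the inductive step, the hypothesis gives that $t_1,\dots,t_{k-1}$ are independent with $\eta:=\prod_{i=1}^{k-1}t_i\ne0$, hence $K^1_\eta=\langle t_1,\dots,t_{k-1}\rangle$ by the previous paragraph. The two conclusions for $k$ then both reduce to the single assertion $t_k\notin\langle t_1,\dots,t_{k-1}\rangle$: indeed this is exactly linear independence of $t_1,\dots,t_k$, and since $K^1_\eta=\langle t_1,\dots,t_{k-1}\rangle$ it is equivalent to $t_k\eta\ne 0$, i.e. to $\prod_{i=1}^k t_i\ne0$.

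The heart of the argument, and the step I expect to be the main obstacle, is proving $t_k\notin\langle t_1,\dots,t_{k-1}\rangle$. I would argue by contradiction: write $t_k=\sum_{i=1}^{k-1}a_it_i$ and exploit that $t_k$ is again nihilpotent. Squaring and using $t_i^2=0$ together with the fact that $\bK$ has characteristic $0$ gives the relation $\sum_{i<j}a_ia_jt_it_j=0$ in $R^2$. To isolate a single coefficient I would multiply this relation by the complementary product $\eta_{i_0 j_0}:=\prod_{l\ne i_0,j_0}t_l$: every term containing some $t_l^2$ with $l\notin\{i_0,j_0\}$ dies, leaving $a_{i_0}a_{j_0}\,\eta=0$. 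As $\eta\ne0$ this yields $a_{i_0}a_{j_0}=0$ for every pair, so at most one coefficient $a_i$ is nonzero; but then $[t_k]=[t_i]$, contradicting distinctness. The delicate point is precisely the bookkeeping showing that all but one monomial vanish after multiplication by $\eta_{i_0 j_0}$, which is where the square-zero property is used decisively (and which also covers the degenerate case $k=2$, where there are no pairs and a single nonzero coefficient already forces $[t_k]=[t_1]$).

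Finally, the bound $\#\cN_2\le n+1$ is a formal consequence: linearly independent vectors of $R^1$ number at most $\dim R^1=n+1$, and for $k\le n+1$ points ``general position in $\bP(R^1)\simeq\bP^n$'' is precisely linear independence, which the induction has established. Equivalently, $\prod_{i=1}^k t_i\ne0$ lives in $R^k$, forcing $k\le N=n+1$ since $R^k=0$ above the socle. The coordinate points of the Fermat algebra $\bK[x_0,\dots,x_n]/(x_0^2,\dots,x_n^2)$ show that all the inequalities are sharp and guide the whole picture.
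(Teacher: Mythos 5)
Your proposal is correct and follows essentially the same route as the paper's proof: the same induction on $k$, the same identification $K^1_{t_1\cdots t_{k-1}}=\langle t_1,\dots,t_{k-1}\rangle$ via Proposition \ref{PROP:SameKer}, the same equivalence between linear dependence and vanishing of the product, and the same trick of squaring $t_k=\sum_i a_it_i$ and multiplying by complementary products of the $t_l$. The only cosmetic difference is at the final step: you multiply by every complementary product $\prod_{l\neq i_0,j_0}t_l$ to obtain $a_{i_0}a_{j_0}=0$ for all pairs, concluding $[t_k]=[t_i]$ directly from distinctness, whereas the paper isolates the single pair $(k-2,k-1)$ and then re-invokes the induction hypothesis; both are valid.
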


\begin{proof}
The statement is trivially true for $k=1$. If $k=2$ the only statement one has to check is that $t_1t_2\neq 0$. This is true since $K^1_{t_1}=\langle t_1\rangle$ by Proposition \ref{PROP:SameKer} and $[t_1]\neq [t_2]$.
We will then proceed by induction assuming that the claim is true till $k-1$. 

Let $T=\{[t_1],\dots,[t_k]\}$ be a set of $k$ distinct points of $\cN_2$. By contradiction, let us assume that either $(A_1)$ or $(A_2)$ holds, where
\begin{description}
\item [$(A_1)$] $\{t_1,\dots,t_{k}\}$ are linearly dependent
\item [$(A_2)$] $\Pi_{i=1}^kt_i=0$.
\end{description}
First of all, we claim that $(A_2)$ is equivalent to $(A_1)$. By induction hypothesis, for $\{z_1,\dots,z_{k-1}\}\subset \{t_1,\dots, t_{k}\}$ with $[z_i]\neq [z_j]$ for all $i\neq j$, we have $\Pi_{i=1}^{k-1}z_i\neq 0$, so $K^1_{z_1\cdots z_{k-1}}$ has dimension at most $k-1$ by Proposition \ref{PROP:SameKer}. 
Since $z_i^2=0$ by assumption, we have $K^1_{z_1\cdots z_{k-1}}=\langle z_1,\dots, z_{k-1}\rangle$. Then, $(A_1)$ holds if and only if we have, up to a permutation of the elements, $t_k\in \langle t_1,\dots, t_{k-1}\rangle=K^1_{t_1\cdots t_{k-1}}$ and this is equivalent to $\Pi_{i=1}^kt_i=0$, i.e. $(A_2)$.
\medskip

Hence, let us suppose that $t_k\in\langle t_1,\dots,t_{k-1}\rangle$, so we can write $t_k=\sum_{i=1}^{k-1}a_it_i$. Then we have $$0=t_k^2=2\sum_{1\le i<j\le k-1}a_ia_jt_it_j.$$
If $k=3$ we have $0=t_3^2=2a_1a_2t_1t_2$ so, since $t_1t_2\neq 0$ by induction hypothesis, we have either $a_1=0$ or $a_2=0$. This implies either $\{t_1,t_3\}$ or $\{t_2,t_3\}$ linearly dependent, and we get a contradiction since this is against the induction hypothesis.
If $k\geq 4$, by multiplying by $\Pi_{i=1}^{k-3}t_i$, we get
$$0=2a_{k-2}a_{k-1}\Pi_{i=1}^{k-1}t_i.$$
Since $\Pi_{i=1}^{k-1}t_i\neq 0$ by induction hypothesis, we have either $a_{k-2}=0$ or $a_{k-1}=0$ and we have a contradiction as in the case $k=3$.
\end{proof}

By considering the Fermat hypersurface $X=V(F)$ in $\bP^n$, one can easily see that, for the Jacobian ring $R=S/J(F)$, the set $\cN_2$ consists of exactly $n+1$ independent points. However, also the converse is true, as shown by the following:

\begin{corollary}
\label{COR:ReconstructFermat}
Assume that $\#\cN_2=n+1$. Then $R$ is the Jacobian ring of a cubic hypersurface $X$ projectively equivalent to the Fermat cubic hypersurface in $\bP^n$.
\end{corollary}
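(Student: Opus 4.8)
The plan is to use the $n+1$ nihilpotent points provided by the hypothesis to build a system of coordinates in which $R$ is visibly the Fermat Jacobian ring. First I would invoke Proposition \ref{PROP:Y2points}: since $\#\cN_2=n+1$ and these points are in general position in $\bP(R^1)\simeq\bP^n$, representatives $t_0,\dots,t_n$ are linearly independent, hence form a basis of $R^1$. Writing $R=S/I$ with $S=\bK[x_0,\dots,x_n]$ and $I=(q_0,\dots,q_n)$ a regular sequence of quadrics, the ideal $I$ starts in degree $2$, so $R^1=S^1$ and each $t_i$ lifts uniquely to a linear form of $S$; thus $t_0,\dots,t_n$ is a new system of linear coordinates on $S$. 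The defining relations $t_i^2=0$ in $R$ mean precisely that $t_i^2\in I$, so that $J:=(t_0^2,\dots,t_n^2)\subseteq I$.

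The heart of the argument, and the step I expect to carry the real content, is to promote this inclusion to an equality $J=I$. Both $J$ and $I$ are complete intersection ideals generated by $n+1$ quadrics in $n+1$ variables: for $I$ this is the hypothesis, while $J$ is the monomial complete intersection attached to the coordinate system $(t_i)$, so that $t_0^2,\dots,t_n^2$ is a regular sequence. Consequently $S/J$ and $S/I$ have the same Hilbert series $(1+t)^{n+1}$, and in particular $\dim_{\bK}(S/J)=\dim_{\bK}(S/I)=2^{n+1}$. The inclusion $J\subseteq I$ induces a surjection $S/J\twoheadrightarrow S/I=R$ of finite-dimensional $\bK$-vector spaces of equal dimension, which must therefore be an isomorphism; hence $J=I$.

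It then remains only to identify $J$ geometrically. Setting $F=\frac{1}{3}\sum_{i=0}^n t_i^3$, one has $\partial F/\partial t_i=t_i^2$, so $J$ is exactly the Jacobian ideal of the cubic $X=V(F)$, and $R=S/J$ is its Jacobian ring. In the coordinates $t_0,\dots,t_n$ the form $F$ is the Fermat cubic, and the passage from $x_0,\dots,x_n$ to $t_0,\dots,t_n$ is a linear change of variables, i.e. a projective transformation of $\bP^n$; hence $X$ is projectively equivalent to the Fermat cubic, as claimed. The only genuine subtlety is the ideal equality in the middle step: one should not conclude $J=I$ from $J\subseteq I$ and both being generated by $n+1$ quadrics alone, but rather from the colength (equivalently Hilbert function) computation, which forces a complete intersection of quadrics contained in another such ideal of the same colength to coincide with it.
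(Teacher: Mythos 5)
Your proof is correct and follows essentially the same route as the paper: invoke Proposition \ref{PROP:Y2points} to get $n+1$ independent linear forms $t_0,\dots,t_n$ with $t_i^2\in I$, conclude $I=(t_0^2,\dots,t_n^2)$, and recognize this as the Jacobian ideal of the Fermat cubic up to a linear change of coordinates. In fact you are slightly more careful than the paper, which simply asserts that $\{t_0^2,\dots,t_n^2\}$ generates $I$, whereas you justify this via the Hilbert series comparison $(1+t)^{n+1}$ forcing the surjection $S/J\twoheadrightarrow S/I$ to be an isomorphism.
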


\begin{proof}
By assumption we have that $\cN_2=\{[t_0],\dots, [t_n]\}$. By Proposition \ref{PROP:Y2points}, $\{t_0,\dots,t_n\}$ are $n+1$ linearly independent forms so 
$R=S/I$ with $S=\bK[t_0,\dots, t_n]$. On the other hand, in $S$ we have $t_i^2\in I$ and $\{t_0^2,\dots,t_n^2\}$ is a regular sequence which generates $I$ as ideal of $S$. Then, if we set $F=\sum_{i=0}^n t_i^3$, we have that $I$ is the Jacobian ideal of the Fermat cubic hypersurface $X=V(F)$ as claimed.
\end{proof}

\begin{remark}
We have $\Sec^k(\cN_2)\subseteq \cN_{k+1}$. Indeed, if $[t_1],\dots, [t_{k}]\in \cN_2$ we have $t_i^2=0$. In particular, every monomial of degree $k+1$ in the variables $t_i$ is identically $0$. Then $(\sum_{i=1}^{k}a_it_i)^{k+1}\equiv 0$ for all $a_1,\dots, a_k\in \bK$ so $\Sec^k(\cN_2)\subseteq \cN_{k+1}$. More generally, 
$$\mbox{ whenever } r>k(a-1) \mbox{ one has } \Sec^k(\cN_a)\subseteq \cN_{r}.$$
Indeed, consider $[t_1],\dots,[t_k]\in \cN_a$ and let $m=\prod_{i=1}^kt_i^{\alpha_i}$ with $\sum_{i=1}^{k}\alpha_i=r$. We have $m=0$ if there exists $i$ such that $\alpha_i\geq a$. On the other hand, this always happens if $r>k(a-1)$: if $\alpha_i\leq(a-1)$ for all $i$, we would have 
$$r=\sum_{i=1}^k\alpha_i\leq \sum_{i=1}^k(a-1)=k(a-1)<r$$
which gives a contradiction.
\end{remark}

\begin{lemma}
\label{LEM:LINESinZ3}
If $L$ is a line contained in $\cN_3$, then $L\subseteq \Sec(\cN_2)$, i.e. a line in $\cN_3$ is a line joining two different points of $\cN_2$.
\end{lemma}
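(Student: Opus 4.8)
The plan is to parametrize the line and reduce the statement to showing that a certain binary quadratic form has two \emph{distinct} roots. Write $L=\langle[a],[b]\rangle$ with $a,b\in R^1$ linearly independent. Since $L\subseteq\cN_3$, every point of $\bP(\langle a,b\rangle)$ lies in $\cN_3$, so $(sa+tb)^3=0$ for all $(s:t)$; comparing coefficients (we are in characteristic $0$) gives $a^3=a^2b=ab^2=b^3=0$. Equivalently, polarizing the identically vanishing cubic shows $u\,v^2=0$ for all $u,v$ in the plane $\langle a,b\rangle$, so in particular $a,b\in K^1_{v^2}$ for every $v\in\langle a,b\rangle$.

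The first key step is to prove that all squares $v^2$, $v\in\langle a,b\rangle$, span at most a line in $R^2$. Whenever $v^2\neq0$, the inclusion $\langle a,b\rangle\subseteq K^1_{v^2}$ together with Proposition \ref{PROP:SameKer}(a) (here $d-2=1$ and $v^2\in R^2$, whence $\dim K^1_{v^2}\le2$) forces $K^1_{v^2}=\langle a,b\rangle$. Then Proposition \ref{PROP:SameKer}(b), applied with $s=2=\dim K^1_{v^2}$, shows that any two nonzero squares are proportional; since $ab=\tfrac12((a+b)^2-a^2-b^2)$, the whole space $W:=\langle a^2,ab,b^2\rangle$ lies in a single line $\bK w$. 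Note $W\neq0$, for otherwise $\langle a,b\rangle\subseteq\cN_2$ would produce a line inside $\cN_2$, contradicting $\dim\cN_2\le0$ (Remark \ref{REM:dimnihil}).

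Writing $v=\lambda a+\mu b$ and $v^2=q(\lambda,\mu)\,w$ then defines a nonzero binary quadratic form $q$, whose projective zeros are exactly the points of $L\cap\cN_2$. Over the algebraically closed field $\bK$ such a $q$ factors as a product of two linear forms, and it suffices to show these are distinct: then $L$ joins two distinct points of $\cN_2$, i.e. $L\subseteq\Sec(\cN_2)$, which is the claim.

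The main obstacle is excluding the double-root case, and this is precisely where the kernel bound is decisive. If $q$ were a perfect square, its polarization (the bilinear pairing $v,v'\mapsto vv'\in\bK w$) would have rank $1$, so there would exist a nonzero $v_0\in\langle a,b\rangle$ with $v_0v'=0$ for all $v'\in\langle a,b\rangle$. In particular $a,b\in K^1_{v_0}$ with $v_0\in R^1$, giving $\dim K^1_{v_0}\ge2$ and contradicting $\dim K^1_{v_0}\le1$ from Proposition \ref{PROP:SameKer}(a). Hence $q$ has two distinct roots, and the proof concludes.
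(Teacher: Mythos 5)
Your proof is correct and takes essentially the same route as the paper's: both derive the vanishing of all cubics on the line, use Proposition \ref{PROP:SameKer} to force all squares of elements of the line to be proportional (so that squaring restricted to $L$ becomes a binary quadratic form times a fixed class), and then prove the lemma by excluding a double root of that form via a kernel-dimension contradiction. The only cosmetic differences are that the paper works with two chosen points $[v],[w]\in L\setminus\cN_2$ with $vw\neq 0$ and reads the double-root relations as $v-\mu w\in K^1_v\cap K^1_w$, concluding with Proposition \ref{PROP:SameKer}(b), whereas you work with an arbitrary basis and read the same relations as $a,b\in K^1_{v_0}$, concluding with Proposition \ref{PROP:SameKer}(a).
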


\begin{proof}
Assume that $L$ is a line in $\cN_3$. Since the dimension of $\cN_3$ is at most $1$ by Remark \ref{REM:dimnihil}, we have that $L$ is a component of $\cN_3$. As $\dim(\cN_2)\leq0$ and $\dim(K^1_{x})\leq 1$ for any $x\in R^1$ by Proposition \ref{PROP:SameKer}(a), we can find $[v],[w]\in L$ such that $[v]\neq [w]$, $[v],[w]\not\in \cN_2$ and $vw\neq 0$. By hypothesis, we have that $(v+tw)^3=0$ for all $t\in\bK$ so
$v^3=v^2w=vw^2=w^3=0$. Then, $K^1_{v^2},K^1_{w^2}$ and $K^1_{vw}$ contain $\langle v,w\rangle$. On the other hand, these subspaces have dimension at most $2$ by Proposition \ref{PROP:SameKer}(a) so they coincide with $\langle v,w\rangle$.
By Proposition \ref{PROP:SameKer}(b), there exist $\lambda,\mu\in\bK$ such that 
\begin{equation}
\label{EQ:lambdamu}
v^2=\lambda w^2 \qquad \mbox{ and }\qquad vw=\mu w^2
\end{equation}
so we have $(v+tw)^2=v^2+2tvw+t^2w^2=w^2(t^2+2\mu t+\lambda)$.
\medskip

We claim that $t^2+2\mu t+\lambda$ has two distinct roots so $L$ is indeed a line contained in $\Sec(\cN_2)$. Assume, on the contrary, that $t^2+2\mu t+\lambda$ is a square. This implies that $\mu^2=\lambda$. Then, from the Equations \eqref{EQ:lambdamu}, we obtain
$$v(v-\mu w)=0 \qquad w(v-\mu w)=0$$
so $v-\mu w\in K^1_{v}\cap K^1_{w}$. By Proposition \ref{PROP:SameKer} we can conclude that $[v]=[w]$ which is against our assumptions.
\end{proof}

We will generalize this result in Theorem \ref{THM:SEC} by considering suitable linear subspaces contained in $\cN_k$. We need first the following technical lemma.

\begin{lemma}
\label{LEM:indpoint}
Let $k\geq 2$ and let $T$ be an hypersurface in $\bP^{k}\subset \bP(R^1)$. Assume either that 
\begin{enumerate}
    \item $0\leq s\leq k-1$ or
    \item $s=k$ and the support of $T$ is not contained in the union of $2$ different hyperplanes.
\end{enumerate}
Then there exist $[x_0],\dots,[x_{s}]\in T$ which are linearly independent and such that $\Pi_{i=0}^{s}x_i\neq 0$.
\end{lemma}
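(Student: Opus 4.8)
The plan is to build the points $[x_0],\dots,[x_s]$ one at a time, by induction on the number already chosen. I would maintain the invariant that after $m$ steps I have $[x_0],\dots,[x_{m-1}]\in T$ that are linearly independent and satisfy $\prod_{i=0}^{m-1}x_i\neq 0$ in $R$. The base case $m=1$ is immediate: since $k\ge 2$, the hypersurface $T\subset\bP^k$ is nonempty, and any of its points yields a nonzero $x_0\in R^1$. Note also that, throughout, $m\le s\le k\le n=N-1$, so that Proposition \ref{PROP:SameKer} is always applicable with exponent $m\ge 1$.

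For the inductive step I set $\eta=\prod_{i=0}^{m-1}x_i\in R^m$, which is nonzero by the invariant, and I look for $[x_m]\in T$ preserving both properties. There are exactly two loci to avoid. To keep the product nonzero I need $\eta x_m\neq 0$, i.e. $[x_m]\notin\bP(K^1_\eta)$; and since $R$ is presented by quadrics ($d=3$), Proposition \ref{PROP:SameKer}(a) gives $\dim(K^1_\eta)\le m$, so $\bP(K^1_\eta)$ has projective dimension at most $m-1$. To keep linear independence I need $[x_m]\notin\Lambda:=\langle[x_0],\dots,[x_{m-1}]\rangle$, a linear subspace of $\bP^k$ of dimension exactly $m-1$. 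Thus it suffices to show $T\not\subseteq\bP(K^1_\eta)\cup\Lambda$; a suitable $[x_m]$ then exists because $\bK$ is algebraically closed and the bad locus is a proper closed subset of $T$.

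When $m\le k-1$ this is a pure dimension count: $T$ is a hypersurface, hence of pure dimension $k-1$, whereas $\bP(K^1_\eta)\cup\Lambda$ has dimension at most $m-1\le k-2$, so $T$ cannot be contained in it. This settles case (1) entirely, and it also takes care of every intermediate step of case (2).

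The delicate point --- and the main obstacle --- will be the last step of case (2), where $m=s=k$. Here $\Lambda$ is a hyperplane of $\bP^k$, of the same dimension $k-1$ as $T$, so the crude dimension count no longer rules out $T\subseteq\Lambda$. The key observation I would use is that the other bad locus is still contained in a hyperplane: $\bP(K^1_\eta)\cap\bP^k$ is a linear subspace of $\bP^k$ of dimension at most $k-1$, hence lies in some hyperplane $H$ of $\bP^k$. Then $\bP(K^1_\eta)\cup\Lambda\subseteq H\cup\Lambda$ is contained in a union of at most two hyperplanes, and the hypothesis of case (2) --- that the support of $T$ is not contained in a union of two distinct hyperplanes, and hence a fortiori not in a single one --- gives $T\not\subseteq H\cup\Lambda$. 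This produces the final point $[x_k]$ off $\bP(K^1_\eta)\cup\Lambda$ and closes the induction.
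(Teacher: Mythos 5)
Your proof is correct and follows essentially the same route as the paper's: induction adding one point at a time, choosing each new point off the union of the span $\langle [x_0],\dots,[x_{m-1}]\rangle$ and $\bP(K^1_\eta)$ (both of projective dimension at most $m-1$ by Proposition \ref{PROP:SameKer}), with a pure dimension count when $m\leq k-1$ and the hypothesis on unions of two hyperplanes invoked only at the final step of case (2). The only cosmetic difference is that the paper concludes that last step by exhibiting an irreducible component of $T$ distinct from the two linear spaces $\tau_1,\tau_2$, whereas you enlarge $\bP(K^1_\eta)$ to a hyperplane $H$ and apply hypothesis (2) to $H\cup\Lambda$; the content is identical.
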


\begin{proof}
The statement of the lemma is clearly true for $s=0$. We will proceed by induction on $s\leq k$. Then assume that there are $[x_0],\dots,[x_{s-1}]\in T$ which are linearly independent and with $y=x_0\cdots x_{s-1}\neq 0$. Consider the linear spaces $\tau_1=\langle x_0,\dots,x_{s-1}\rangle$ and $\tau_2=\bP(K^1_{y})$. We are done if we prove that $U=T\setminus (\tau_1\cup \tau_2)$ is not empty.
By construction we have $\dim(\tau_1)=s-1$ and $\dim(\tau_2)\leq s-1$ by Proposition \ref{PROP:SameKer}. Hence, if $s<k$, $U$ is an open dense subset of $T$. If $s=k$ and the support of $T$ is not contained in the union of $2$ different hyperplanes, there exists an irreducible component $C$ of $T$ which is different from $\tau_1$ and $\tau_2$. Then $C\setminus (\tau_1\cup \tau_2)$ is not empty so $U$ is again not empty as claimed.
\end{proof}

\begin{theorem}
\label{THM:SEC}
Assume that $\pi$ is a $(k-1)$-plane contained in $\cN_{k+1}$. Then 
\begin{enumerate}[(A)]
    \item $T_k=\pi\cap \cN_k$ is an hypersurface of degree $k$ (with possible multiple components) in $\pi$;
    \item there exist $[x_0],\dots,[x_{k-1}]$ in $T_k$ which are linearly independent, $\Pi_{i=0}^{k-1}x_k\neq 0$.
\end{enumerate}
In particular, $T_k$ is non degenerate in $\pi$ and $\pi\subseteq \Sec^k(\cN_k)$. 
\end{theorem}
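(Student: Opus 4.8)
Write $\pi=\bP(W)$ for a $k$-dimensional subspace $W\subseteq R^1$, and recall that here $d=3$, so Proposition \ref{PROP:SameKer}(a) reads $\dim(K^1_\eta)\le s$ for $[\eta]\in\bP(R^s)$. The plan is to first understand the map $W\to R^k$, $x\mapsto x^k$. Since every $x\in W$ satisfies $x^{k+1}=0$, I would expand $(x+tv)^{k+1}=0$ and read off the coefficient of $t$ to get $x^kv=0$ for all $x,v\in W$; hence $W\subseteq K^1_{x^k}$. When $x^k\neq 0$, Proposition \ref{PROP:SameKer}(a) forces $\dim(K^1_{x^k})\le k=\dim W$, so $K^1_{x^k}=W$. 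Applying Proposition \ref{PROP:SameKer}(b) to two such elements then shows that all nonzero powers $x^k$ (for $[x]\in\pi$) are proportional to a single $\xi\in R^k$. Therefore there is a homogeneous form $g$ of degree $k$ on $W$ with $x^k=g(x)\,\xi$ for every $x\in W$, and consequently $T_k=\pi\cap\cN_k=\{[x]\in\pi:g(x)=0\}=V(g)$.

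For (A) it then remains to check that $g\not\equiv 0$. If $g$ vanished identically, then $\pi\subseteq\cN_k$; but $\dim(\pi)=k-1$ while $\dim(\cN_k)\le k-2$ by Remark \ref{REM:dimnihil}, a contradiction. Hence $g$ is a nonzero form of degree exactly $k$ and $T_k$ is a degree-$k$ hypersurface of $\pi$ (possibly with multiple components), which is (A).

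For (B) I would invoke Lemma \ref{LEM:indpoint} with ambient space $\pi\cong\bP^{k-1}$ and $s=k-1$. This is the boundary case $s=\dim\pi$, so the only hypothesis to verify is that the support of $T_k$ is not contained in the union of two hyperplanes of $\pi$; granting this, the lemma yields $[x_0],\dots,[x_{k-1}]\in T_k$ which are linearly independent with nonzero product, hence a basis of $W$, so $T_k$ is non-degenerate and $\pi=\langle[x_0],\dots,[x_{k-1}]\rangle\subseteq\Sec^k(\cN_k)$. The crux — and the step I expect to be the main obstacle — is therefore the support condition. The key identity I would establish is
$$\Sing(T_k)=T_{k-1}=\pi\cap\cN_{k-1}.$$
Indeed, differentiating $x^k=g(x)\xi$ along $w\in W$ gives $k\,x^{k-1}w=(D_wg)(x)\,\xi$; thus $\nabla g(x)=0$ forces $x^{k-1}w=0$ for all $w\in W$, whence $W\subseteq K^1_{x^{k-1}}$, and since $\dim(K^1_{x^{k-1}})\le k-1<\dim W$ whenever $x^{k-1}\neq 0$, we must have $x^{k-1}=0$. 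The converse is immediate, and $\Sing(V(g))=V(\nabla g)$ by Euler's relation.

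Finally I would derive the contradiction. Assume $\Supp(T_k)$ lies in a union of at most two hyperplanes; since $V(g)$ is a hypersurface of pure dimension $k-2$, each of its components is one of these hyperplanes, so $g=\ell_1^a\ell_2^b$ for linear forms $\ell_i$ with $a+b=k$. A direct computation of $\nabla g$ shows that in every case (one or two distinct $\ell_i$, with $k\ge 3$) the locus $V(\nabla g)=\Sing(T_k)$ contains an entire hyperplane of $\pi$, i.e. a $(k-2)$-plane. But $\Sing(T_k)=\pi\cap\cN_{k-1}$ has dimension at most $\dim(\cN_{k-1})\le k-3$ by Remark \ref{REM:dimnihil}, a contradiction. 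This settles the support condition for $k\ge 3$; the remaining base case $k=2$ (a line $\pi\subseteq\cN_3$) follows directly from Lemma \ref{LEM:LINESinZ3} together with Proposition \ref{PROP:Y2points}, which exhibit two distinct points of $\cN_2$ spanning $\pi$ with nonzero product.
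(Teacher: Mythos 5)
Your proof is correct, but it does not follow the paper's route in the harder half of the statement. For part (A) your argument is essentially the paper's in basis-free form: the paper fixes a basis $x_0,\dots,x_{k-1}$ of $W$ with $x_{k-1}\notin\cN_k$ and uses Proposition \ref{PROP:SameKer} to show every degree-$k$ monomial in the $x_i$ is proportional to $x_{k-1}^k$, which yields exactly your $x^k=g(x)\,\xi$ (note only that your $\xi$ must be produced \emph{before} it is used, via the same dimension count $\dim(\cN_k)\le k-2$ that you invoke to get $g\not\equiv0$; this is a harmless reordering). For part (B) the two proofs genuinely diverge. The paper, like you, reduces via Lemma \ref{LEM:indpoint} to the configurations $\Supp(T_k)=H_1\cup H_2$ and $\Supp(T_k)=H_1$, but it does \emph{not} exclude the two-hyperplane case: it handles it constructively, showing $H_1=\bP(K^1_y)$ for $y$ a product of $k-1$ independent elements of $H_1$ and completing the required set of points with any point of $H_2\setminus H_1$; only the single-hyperplane case is ruled out, by comparing the coefficient of $\prod_i\alpha_i$ in $p_k$ (nonzero because $yx_{k-1}\neq0$) with the shape $\alpha_{k-1}^k$ forced by the support. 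Your route instead kills both configurations at once for $k\ge3$ through the identity $\Sing(T_k)=\pi\cap\cN_{k-1}$, obtained by differentiating $x^k=g(x)\xi$ and applying Proposition \ref{PROP:SameKer}(a), combined with $\dim(\cN_{k-1})\le k-3$ from Remark \ref{REM:dimnihil}. This identity is not in the paper and buys you strictly more: $T_k$ is in fact reduced, and its support is never contained in two hyperplanes when $k\ge3$ (consistently with the Fermat example, where $T_k$ is a union of $k$ coordinate hyperplanes). The price is a separate base case $k=2$ — where the two-point configuration genuinely occurs and your dichotomy fails — which you correctly settle with Lemma \ref{LEM:LINESinZ3} and Proposition \ref{PROP:Y2points}, whereas the paper's case analysis is uniform in $k$. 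Both arguments rest on the same pillars (Proposition \ref{PROP:SameKer}, Remark \ref{REM:dimnihil}, Lemma \ref{LEM:indpoint}), so the difference lies in the mechanism, not the toolkit.
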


\begin{proof}
Notice, first of all, that $\dim(\cN_{k})\leq k-2$ by Remark \ref{REM:dimnihil}, so $\pi\setminus \cN_k\neq \emptyset$. Then, we can find  $\{x_0,\dots,x_{k-1}\}$ linearly independent which span $\pi$ and such that $x_{k-1}\not\in \cN_k$. Since $\pi\subseteq \cN_{k+1}$, we have that
$$(\alpha_0x_0+\cdots +\alpha_{k-1}x_{k-1})^{k+1}\equiv 0\qquad \forall \alpha_0,\dots, \alpha_{k-1}\in \bK$$
and this is equivalent to say that all monomials of degree $k+1$ in the variables $x_0,\dots,x_{k-1}$ are $0$. Then, if $m$ is a monomial of degree $k$ in these variables, either $m=0$ or $K^1_{m}=\langle x_0,\dots,x_{k-1}\rangle$, by Proposition \ref{PROP:SameKer}.
In particular, we have that for each monomial of degree $k$ there exists $\lambda_m\in\bK$ with $m=\lambda_m x_{k-1}^k$ (recall that we assumed $x_{k-1}\not\in \cN_{k}$). 
Then
\begin{equation}
\label{EQ:Degk}
(\alpha_0x_0+\cdots +\alpha_{k-1}x_{k-1})^{k}=p_{k}(\underline{\alpha}) x_{k-1}^k
\end{equation}
where $p_k(\underline{\alpha})$ is a homogeneous polynomial of degree $k$ in the variables $\alpha_0,\dots, \alpha_{k-1}$. It is not $0$ since the coefficient of $\alpha_{k-1}^k$ is $1$ by construction. By Equation \eqref{EQ:Degk}, $T_k=\pi\cap \cN_{k}$ is described by the vanishing  of $p_k(\underline{\alpha})$. In particular, $\cN_k$ is not empty and we have also proved $(A)$.
\medskip

For $(B)$, if $T_k$ has support which is not contained in $2$ different hyperplanes, the thesis follows directly from Lemma \ref{LEM:indpoint} so we have to discuss only the cases 
$$(B_1): \ \Supp(T_k)=H_1\cup H_2\qquad \mbox{ and }\qquad (B_2): \ \Supp(T_k)=H_1$$
where $H_1$ and $H_2$ are distinct hyperplanes.
\medskip

In both cases $(B_1)$ and $(B_2)$, there is an hyperplane $H_1$ of $\pi$ contained in $T_k$. We recall that $T_k$ is contained in $\cN_k$ by construction. By Lemma \ref{LEM:indpoint} applied to $H_1\subset \pi$ we can find $[x_0],\dots,[x_{k-2}]\in H_1$ which are linearly independent and such that $y=\prod_{i=0}^{k-2}x_i\neq 0$. Since $H_1=\langle [x_0],\dots,[x_{k-2}]\rangle$ and $H_1\subset \cN_k$ we have that all monomials of degree $k$ in the variables $x_0,\dots,x_{k-2}$ are $0$. Then, by Proposition \ref{PROP:SameKer}, $K^1_{y}=\langle x_0,\dots,x_{k-2}\rangle$ so $H_1=\bP(K^1_{y})$. 
\medskip

If we are in case $(B_1)$ we can then choose $x_{k-1}$ in $H_2\setminus H_1$ and $\{x_0,\dots,x_{k-2},x_{k-1}\}$ is a set of points with the desired properties. We claim now that case $(B_2)$ can not occur. Assume, by contradiction, that $\Supp(T_k)$ is the hyperplane $H_1=\bP(K^1_{y})$. Then for any $x_{k-1}$ in $\pi\setminus H_1$ we have that $\pi=\langle x_0,\dots,x_{k-2},x_{k-1}\rangle$, $x_{k-1}^k\neq 0$ and $yx_{k-1}\neq 0$. With this choice of the $x_i$'s, the polynomial $p_{k}(\underline{\alpha})$ of Equation \eqref{EQ:Degk} is proportional to $\alpha_{k-1}^k$ since $T_k=\pi\cap \cN_k$ has support on $H_1$. On the other hand the coefficient of $\prod_{i=0}^{k-1}\alpha_i$ can not be zero since $\prod_{i=0}^{k-1}x_i=yx_{k-1}\neq 0$.
\end{proof}

We conclude this section by presenting some examples in order to make the phenomenology of the nihilpotent loci clearer (some computations have been made by using the computer algebra software Magma). We set $S=\bK[x_0,\dots,x_n]=\bigoplus_{k\geq 0} S^k$ and we define $\{w_0,\dots,w_n\}$ to be the projective coordinates on $\bP(R^1)$ induced by the basis $\{x_0,\dots,x_n\}$ of $R^1=S^1$.

\begin{example}
\label{EX:EX1}
Let $X$ be the Fermat cubic in $\bP^n$ and consider the Jacobian ring $R$ of $X$. For any $2\leq k\leq n$ we have 
$\left(\sum_{i}w_ix_i\right)^k\in J^k$ if and only if all monomials in the $\{w_i\}$ of degree $k$ without multiple factors vanish. This is true whenever any set of $n-k+1$ variables is zero. With these arguments one can prove that $\cN_k$ is the union of the coordinated planes of dimension $k-2$. In particular, $\cN_k=\Sec^k(\cN_2)$ and $\Sing(\cN_k)=\cN_{k-1}$ for $k\geq 2$.
\end{example}

\begin{example}
\label{EX:EX2}
Consider the smooth cubic surface $X=V(f)$ with $f=x_0^3+x_1^3+x_2^3+x_3^3+6x_0x_1x_2$ and consider the Jacobian ring $R$ of $X$.
One has that there are $4$ points in $\cN_2$ so, by Corollary \ref{COR:ReconstructFermat}, $X$ is the Fermat cubic up to a projective transformation. Indeed, if $\lambda$ is a non trivial third root of $1$, we have
$$(x_0+x_1+x_2)^3+(x_0 - (\lambda+1)x_1 + \lambda x_2)^3+(x_0 + \lambda x_1 - (\lambda+1)x_2)^3+3x_3^3=3f.$$
\end{example}

\begin{example}
\label{EX:EX3}
Consider the smooth cubic surface $X=V(f)$ with $f=x_0^3+x_1^3+x_2^3+x_3^3+3x_0x_1x_2$ and consider the Jacobian ring $R$ of $X$.
If $P=[0:0:0:1]$ and $C=V(w_3,g)$ is the smooth plane cubic with $g:=w_0^3+w_1^3+w_2^3-6w_0w_1w_2$, we have (considering the reduced structure)
$$\cN_2=\{P\}\qquad \cN_3=\{P\}\cup C\qquad \cN_4=V(w_3)\cup V(g).$$
In particular, $\cN_4$ is the union of the plane containing the cubic curve $C$ and the cone with vertex $P$ generated by $C$. Notice that $\cN_3$ does not have pure dimension.
\end{example}

\begin{example}
\label{EX:EX4}
Consider the smooth cubic surface $X=V(f)$ with $f=x_0^3+x_1^3+x_2^3+x_3^3+x_0(x_1^2+x_2^2+x_3^2)$ and let $R$ be its Jacobian ring. One can show that, in this case, $\cN_2$ and $\cN_3$ are both empty whereas $\cN_4$ is a smooth quartic hypersurface.
\end{example}

\begin{example}
\label{EX:EX5}
Consider the regular sequence $\{x_0^2,x_1^2,x_2^2,x_3^2+2x_0x_1\}$ in $S=\bK[x_0,\dots,x_3]$, the ideal $J$ spanned by it and set $R=S/J$. Notice that $J$ is not the Jacobian ideal of a cubic surface.
Let $P_i$ be the coordinated points and consider the conic $C=V(w_2,g)$ with $g:=w_3^2-3w_0w_1$. Then we have
$$\cN_2=\{P_0,P_1,P_2\}\qquad \cN_3=\langle<P_0,P_1>\rangle\cup \langle<P_0,P_2>\rangle\cup\langle<P_1,P_2>\rangle\cup C$$
$$\cN_4=V(w_3)\cup V(w_2)\cup V(g).$$
In particular, $\cN_4$ is the union of two planes (the first one - $V(w_3)$ - contains $P_0,P_1$ and $P_2$ and the lines joining these points whereas the second - $V(w_2)$ - is the plane containing the conic $C$ and the line $\langle P_0,P_1\rangle$) and $V(g)$ (which is a quadric cone with vertex $P_2$). Notice that, as varieties, we have $\Sing(\cN_k)=\cN_{k-1}$ for $k=2,3,4$.
\end{example}


\section{Strong Lefschetz property in degree $1$ for codimension $6$}
\label{SEC:SLP1}

In this section we prove our main result, i.e. the following:
\begin{theorem}
\label{THM:SLP1}
Let $R$ be a complete intersection SAGA presented by quadrics of codimension $6$. Then $R$ satisfies the strong Lefschetz property in degree 1 ($SLP_1$), i.e. the general element $x\in R^1$ is such that the map $x^4\cdot:R^1\rightarrow R^5$ is an isomorphism.
\end{theorem}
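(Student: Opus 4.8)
The plan is to argue by contradiction, inside the framework of Construction \ref{CONSTR:OURFRAMEWORK}. Since $R$ is a complete intersection of six quadrics, its Hilbert function is $\dim R^i=\binom{6}{i}$, so $\dim R^1=\dim R^5=6$ and the socle lies in degree $N=6$; thus $SLP_1$ is exactly $SLP_1(4)$, and as source and target have equal dimension it suffices to show that $x^4\cdot\colon R^1\to R^5$ is injective for general $x$. Suppose $SLP_1(4)$ fails. Then $p_1\colon\Gamma_{4,1}^{(1,1)}\to\bP(R^1)$ is surjective and we obtain $\Theta$, $Y=\pi_2(\Theta)$ and the fibres $F_y$ as in \ref{CONSTR:OURFRAMEWORK}, with $n=5$ and $k=N-2=4$. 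By Lemma \ref{LEM:easyresults}(c), $1\le\dim Y\le 3$, $2\le\dim F_y\le 4$ and $\dim Y+\dim F_y=\dim\Theta\ge 5$. Applying Proposition \ref{PROP:KERCOKERDelux} to $\Theta\subseteq\Gamma_{4,1}^{(1,1)}$ (here $k=4\le N-2$) gives $\Theta\subseteq\Gamma_{i,j}^{(1,1)}$ for all $i+j=5$, $j\ge 1$; in particular the slot $(i,j)=(1,4)$, for which $a\,i+b(j+1)=6\le N$, yields $y^5=0$ by part (2). Hence $Y\subseteq\cN_5$, which has dimension $\le 3$ by Remark \ref{REM:dimnihil}.

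Next I would sharpen the numerics. For general $([x],[y])\in\Theta$ the point $x$ is a smooth point of the cone $\tilde{F}_y$, so $\dim T_{\tilde{F}_y,x}=\dim F_y+1$; by Lemma \ref{LEM:xtangy}(b) (slot $(\alpha,\beta)=(3,1)$) we have $T_{\tilde{F}_y,x}\subseteq K^1_{x^3y}$, and $\dim K^1_{x^3y}\le 4$ by Proposition \ref{PROP:SameKer}(a), whence $\dim F_y\le 3$. The decisive step bounds $\dim Y$: since $Y\subseteq\cN_5$, Lemma \ref{LEM:tangnihil} gives $T_{Y,[y]}\subseteq T_{[y]}(\cN_5)\subseteq\bP(K^1_{y^4})$ at general $[y]\in Y$, while $x\in\bP(K^1_{y^4})$ (because $xy^4=0$) and $x\notin T_{Y,[y]}$ by Lemma \ref{LEM:xtangy}(a). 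So the inclusion $T_{Y,[y]}\subseteq\bP(K^1_{y^4})$ is strict, giving $\dim Y\le\dim K^1_{y^4}-2\le 2$ whenever $y^4\ne 0$; and if $y^4=0$ on $Y$ then $Y\subseteq\cN_4$ forces $\dim Y\le 2$ by Remark \ref{REM:dimnihil} too. Combined with $\dim Y+\dim F_y\ge 5$ and $\dim F_y\le 3$, this pins down $\dim Y=2$, $\dim F_y=3$, $\dim\Theta=5$, and since the general fibre of $\pi_1$ is $\bP(K^1_{x^4})$ of dimension $\dim\Theta-n=0$, also $\dim K^1_{x^4}=1$.

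In this surviving case $\dim T_{\tilde{F}_y,x}=4$ is contained, by Lemma \ref{LEM:xtangy}(b), in each of $K^1_{x^3y}$, $K^1_{x^2y^2}$, $K^1_{xy^3}$, all of dimension $\le 4$; hence these three kernels coincide, each of dimension exactly $4$. As $4=(d-2)\cdot 4$ with $d=3$, this is the equality case of Proposition \ref{PROP:SameKer}(b), which forces $[x^3y]=[x^2y^2]=[xy^3]$ in $\bP(R^4)$. I would then combine these proportionalities with the cone structure of $F_y$: writing $x^2y^2=a\,x^3y$ and $xy^3=b\,x^3y$ and expanding $z^2y^2$ along a ruling $\langle x,y\rangle\subseteq F_y$, the constancy of the Gauss map of the cone splits the argument by whether $y^4=0$. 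If $y^4\ne 0$, one gets $y^4$ proportional to $x^3y$ as well, so $F_y=\bP(\langle y\rangle\oplus K^1_{y^3})$ is a linear $\bP^3$ with $\dim K^1_{y^3}=3$; expanding $x^4$ for $x=sy+k$, $k\in K^1_{y^3}$ (using $y^3k=0$ and the forced relations $yk^4=y^2k^3=0$) should exhibit a nonzero $v\in K^1_{y^3}\setminus\langle y\rangle$ with $vx^4=0$, contradicting $\dim K^1_{x^4}=1$. If instead $y^4=0$, then $Y$ is a $2$-dimensional component of $\cN_4$ with $\dim K^1_{y^3}=3$, and the contradiction comes from the nondegeneracy and secant constraints of Theorem \ref{THM:SEC} applied to the linear spaces forced inside the nihilpotent loci.

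The clean part is the reduction to the single numerical case $(\dim Y,\dim F_y,\dim K^1_{x^4})=(2,3,1)$, achieved by combining Proposition \ref{PROP:KERCOKERDelux} (to place $Y$ inside $\cN_5$) with the tangent-space comparison of Lemmas \ref{LEM:tangnihil} and \ref{LEM:xtangy}. The main obstacle is the last step: turning the proportionalities $[x^3y]=[x^2y^2]=[xy^3]$ into a genuine contradiction. This needs a careful multilinear analysis along the cone $F_y$, a separate treatment of the regimes $y^4\ne 0$ and $y^4=0$, and the detailed geometry of the nihilpotent loci from Section \ref{SEC:Nihil} (Remark \ref{REM:dimnihil} and Theorem \ref{THM:SEC}).
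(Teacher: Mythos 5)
Your overall strategy coincides with the paper's (contradiction, the framework of Construction \ref{CONSTR:OURFRAMEWORK}, and a case analysis on the pair $(\dim Y,\dim F_y)$), and your bound $\dim Y\le 2$ is a correct variant of the paper's Proposition \ref{PROP:dimYdiversoda_nmeno2} (the intermediate inclusion $T_{Y,[y]}\subseteq T_{[y]}(\cN_5)$ is not literally licensed by Lemma \ref{LEM:tangnihil} when $Y$ is a proper subvariety of a component of $\cN_5$, but differentiating arcs of $Y$ inside $\cN_5$ gives $T_{Y,[y]}\subseteq\bP(K^1_{y^4})$ directly, so this is a harmless slip). The first genuine gap is in your bound $\dim F_y\le 3$: Proposition \ref{PROP:SameKer}(a) gives $\dim K^1_{x^3y}\le 4$ only when $x^3y\neq 0$, i.e.\ when $[x^3y]$ is a point of $\bP(R^4)$. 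If instead $x^3y\equiv 0$ along $\Theta$ (equivalently $\Theta\subseteq\Gamma_{3,1}^{(1,1)}$, which a priori can happen --- it is exactly the failure of $SLP_1(3)$, not yet available at this stage), then $K^1_{x^3y}=R^1$ and your inequality says nothing, leaving $\dim F_y=4$ open. Ruling out precisely this degeneration is the content of the paper's Proposition \ref{PROP:Fyhyp}, whose proof is not a one-liner: it shows that $\dim F_y=n-1$ forces $\Theta\subseteq\Gamma_{k-1}$, hence $\Theta=\Theta_{k-1}$, and iterates this descent down to $\Theta\subseteq\Gamma_{1}$, contradicting $WLP_1$ (known from \cite{MN13}). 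Your proposal contains no substitute for this iteration.

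The second, and larger, gap is the surviving case $(\dim Y,\dim F_y)=(2,3)$, which is the heart of the theorem and which you yourself flag as an obstacle. The proportionalities $[x^3y]=[x^2y^2]=[xy^3]$ (valid only after checking that these products are nonzero, which you do not do, though it can be arranged as in the paper) are merely a starting point; the subsequent steps --- ``constancy of the Gauss map'', the claim that $F_y=\bP(\langle y\rangle\oplus K^1_{y^3})$ is a linear $\bP^3$, ``should exhibit a nonzero $v$'' --- are not proved. Moreover the appeal to Theorem \ref{THM:SEC} in the regime $y^4=0$ does not apply: that theorem requires a \emph{linear} subspace contained in a nihilpotent locus, while here $Y$ is merely a $2$-dimensional subvariety of $\cN_4$, with no reason to be a plane. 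The paper closes this case by a different mechanism (Proposition \ref{PROP:2,3} combined with Remark \ref{REM:dimnihil}): assuming $\dim F_y=k-1$ and $Y\not\subseteq\cN_{k-1}$, it proves $T_y(Y)\subseteq T_x(F_y)$, lifts an arbitrary $w\in T_{\tilde{Y},y}$ to $(v,w)\in T_{\tilde{\Theta},p}$ using surjectivity of $d_p\tilde{\pi}_2$, and combines the two linearized relations coming from $(\alpha,\beta)=(k-1,2)$ and $(k,1)$ to conclude $x^k w=0$, i.e.\ $T_y(Y)\subseteq\pi_1^{-1}([x])$; this forces $\dim Y\le\dim\Theta-n$, contradicting Lemma \ref{LEM:easyresults}. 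Hence $Y\subseteq\cN_3$, impossible for $\dim Y=2$ since $\dim\cN_3\le 1$. Without this argument (or an equally rigorous replacement), your proposal is a correct reduction to the hard case, not a proof of the theorem.
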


In order to prove the above theorem we will need some technical results (Propositions \ref{PROP:Fyhyp}, \ref{PROP:dimYdiversoda_nmeno2} and \ref{PROP:2,3}) which will be treated separately in Subsection \ref{SUBSEC:prepSLP1} as they are interesting on their own because they are valid in a much larger framework than the one of the theorem. Now we will assume them and we will deal with the proof of the main theorem.

\begin{proof} (of Theorem \ref{THM:SLP1})
Let us assume by contradiction that the statement does not hold: the map $x^4\cdot:R^1\rightarrow R^5$ is never injective for $x\in R^1$. We are then in the situation described in Construction \ref{CONSTR:OURFRAMEWORK}
with $\Gamma=\Gamma_{4,1}^{(1,1)}=\{([x],[y])\in\bP(R^1)\times\bP(R^1) \ | \ x^4y=0\}$: we define $\Theta,Y$ and $F_y$ as usual.
Let us now focus on the dimensions of $Y$ and of $F_y$ for general $[y]\in Y$.

First of all, let us recall that, by Lemma \ref{LEM:easyresults}, we have 
\begin{equation}
\label{EQ:bounds}
1\le\dim(Y)\le3 \qquad 2\le\dim(F_y)\le4 \qquad \dim(Y)+\dim(F_y)=\dim(\Theta)\ge5.
\end{equation}
Let us now list in the following table the possible values of the pairs $(\dim(Y),\dim(F_y))$.
By using the constraints \eqref{EQ:bounds} and the various results proved in Subsection \ref{SUBSEC:prepSLP1}, we can exclude all the cases: in the table below we specify which result rules out each pair.

\begin{center}
\begin{tabular}{c||c|c|c|}
$\dim(Y)$ VS $\dim(F_y)$ & $2$ & $3$ & $4$\\
\hline\hline
1 & \eqref{EQ:bounds} & \eqref{EQ:bounds} & Prop. \ref{PROP:Fyhyp}\\
\hline
2 & \eqref{EQ:bounds} & Prop. \ref{PROP:2,3} + Rem. \ref{REM:dimnihil} & Prop. \ref{PROP:Fyhyp}\\
\hline
3 & Prop. \ref{PROP:dimYdiversoda_nmeno2} & Prop. \ref{PROP:dimYdiversoda_nmeno2} & Prop. \ref{PROP:Fyhyp} \\ \hline
\end{tabular}
\end{center}

Since no pair as above is possible for our framework, we get a contradiction and this concludes the proof.
\end{proof}

We have the following easy but important consequence.

\begin{corollary}
The jacobian ring of a smooth cubic fourfold satisfies the strong Lefschetz property in degree $1$.
\end{corollary}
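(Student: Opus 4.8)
The corollary follows immediately from Theorem \ref{THM:SLP1}, so the plan is essentially to verify that the jacobian ring of a smooth cubic fourfold falls exactly into the class of algebras to which that theorem applies. Let $X = V(F) \subseteq \bP^5$ be a smooth cubic fourfold, so $F \in S = \bK[x_0,\dots,x_5]$ is a homogeneous polynomial of degree $3$, and let $R = S/J(F)$ be its jacobian ring, where $J(F) = \left(\partial F/\partial x_0, \dots, \partial F/\partial x_5\right)$ is the jacobian ideal.

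The key steps are the following. First I would record that since $X$ is smooth, the partial derivatives $\partial F/\partial x_0, \dots, \partial F/\partial x_5$ have no common zero in $\bP^5$, i.e. $V(J(F)) = \emptyset$; this is precisely the Jacobian criterion for smoothness. Consequently these six partials form a regular sequence of length $6 = n+1$ (with $n=5$) in $S$, so that $R = S/J(F)$ is a complete intersection. As recalled in the discussion following Definition \ref{DEF:ARTGOR}, such a quotient is a complete intersection SAGA. Second, I would observe that since $\deg F = 3$, each partial derivative $\partial F/\partial x_i$ is homogeneous of degree $2$; hence $J(F)$ is generated by forms of degree $e = 2$, so $R$ is \emph{presented by quadrics} in the terminology of the paper. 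Third, the codimension of $R$ equals the number of generators of the regular sequence, namely $n+1 = 6$, matching the hypothesis of the theorem exactly.

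Having verified that $R$ is a complete intersection SAGA presented by quadrics of codimension $6$, I would simply invoke Theorem \ref{THM:SLP1} to conclude that $R$ satisfies $SLP_1$: for general $x \in R^1$ the multiplication map $x^4\cdot : R^1 \to R^5$ is an isomorphism.

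There is no genuine obstacle here — this corollary is a direct specialization of the main theorem, and all the work has already been done in proving Theorem \ref{THM:SLP1}. The only point requiring a line of justification is the translation of the geometric smoothness hypothesis on $X$ into the algebraic statement that the partials form a regular sequence presenting $R$ as a codimension-$6$ complete intersection by quadrics; this is classical (the partials of a smooth hypersurface of degree $d$ in $\bP^n$ always yield such an algebra, with socle in degree $N = (n+1)(d-2)$, which here gives $N = 6 \cdot 1 = 6$, consistent with $N = n+1 = 6$).
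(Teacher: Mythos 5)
Your proposal is correct and coincides with the paper's own (implicit) argument: the paper states this corollary without proof precisely because it is the direct specialization of Theorem \ref{THM:SLP1} to $R = S/J(F)$, using the classical fact---recalled after Definition \ref{DEF:ARTGOR}---that the jacobian ring of a smooth cubic hypersurface in $\bP^5$ is a complete intersection SAGA of codimension $6$ presented by quadrics. Your verification of the hypotheses (regular sequence via the Jacobian criterion, generators of degree $2$, codimension $6$, socle degree $N=6$) is exactly the routine check the authors left to the reader.
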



\subsection{General preparatory results}
\label{SUBSEC:prepSLP1}

In this subsection we consider a complete intersection SAGA presented in degree $d-1$, i.e. $R=S/I$ where $S=\bK[x_0,\dots,x_n]$ and $I$ is generated by a regular sequence of forms of degree $d-1$.
In this case, we recall that $R$ is a SAGA of codimension $n+1$ with socle in degree $N=(d-2)(n+1)$. 
Moreover, we will assume that $R$ is a SAGA which does not satisfy the strong Lefschetz property in degree $1$ at range $k$ with $2\leq k\leq N-2$ ($SLP_1(k)$). Equivalently, the multiplication map 
$x^k\cdot :R^1\rightarrow R^{k+1}$ is never injective. Hence, we are in the situation described more generally in Section \ref{SEC:Notations}. Under the above assumptions we have 
\begin{equation}
\label{DIAG:k}
\xymatrix@R=10pt{
F_y\times [y] \ar[ddd]_{\simeq} \ar@{^{(}->}[rd] \ar[rrr]^-{\pi_2} & & & [y] \ar@{^{(}->}[d] \\ 
& \Theta \ar@/_1.0pc/@{->>}[ddr]_-{\pi_1}  \ar@{->>}[rr]^-{\pi_2} \ar@{^{(}->}[rd]  & & Y \ar@{^{(}->}[d] \\
& & \Gamma_{k} \ar[r]^-{p_2} \ar@{->>}[d]^-{p_1} & \bP(R^1) \\
F_y \ar@{^{(}->}[rr] & & \bP(R^1)
}
\end{equation}
where we have set $\Gamma_{k}:=\Gamma_{k,1}^{(1,1)}=\{([x],[y])\in\bP(R^1)\times\bP(R^1) \ | \ x^ky=0\}$. We recall that $\Theta$ is the unique irreducible component of $\Gamma_{k}$ that dominates $\bP(R^1)$ via its first projection $\pi_1$, $Y=\pi_2(\Theta)$ and $F_y=\pi_1(\pi_2^{-1}([y]))$ for $[y]\in Y$.

\begin{remark}
\label{REM:WLP1}
We recall that all complete intersection SAGAs presented in degree $d-1$ satisfy $SLP_1(1)=WLP_1$ (see \cite[Proposition 4.3]{MN13} and \cite[Corollary 4.3]{BFP}). For this reason in this subsection we do not consider $k=1$ since in this case it is possible to construct $\Gamma_{1}$, but $p_1$ is never dominant (so $\Theta,Y$ and $F_y$ cannot be constructed).
\end{remark}

Now we prove some results giving restrictions on the dimensions of $Y$ and of the general $F_y$ with $[y]\in Y$. We are ultimately interested into the case where $d=3$. Nevertheless, we stress that the following proposition (\ref{PROP:Fyhyp}) holds for every $d\geq 3$.


\begin{proposition}
\label{PROP:Fyhyp}
If we assume $n>\frac{k}{d-2}$ and $k\geq 2$, then
$$\dim(F_y)\le n-2.$$
In particular, if $d=3$, then $F_y$ cannot be an hypersurface ($k\le N-2$).
\end{proposition}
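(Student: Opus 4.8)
The plan is to bound $\dim F_y$ by controlling the embedded tangent space of its affine cone at a general point and feeding the result into Proposition \ref{PROP:SameKer}(a). Fix a general point $([x],[y])\in\Theta$ and let $\tilde{F}_y$ be the affine cone over $F_y$. Then $[x]$ is a general, hence smooth, point of $\tilde{F}_y$, so that $\dim F_y=\dim \tilde{F}_y-1=\dim T_{\tilde{F}_y,x}-1$; it therefore suffices to prove $\dim T_{\tilde{F}_y,x}\le \tfrac{k}{d-2}$.

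The key idea is to differentiate the \emph{sharpest} available nilpotency relation along $\tilde{F}_y$, rather than the relation $x^ky=0$ itself. Let $m=\max\{\,j\ge 0 : x^jy\neq 0\,\}$ at the general point $([x],[y])\in\Theta$. Since $y\neq 0$ we have $m\ge 0$, and since $\Theta\subseteq\Gamma_k=\{x^ky=0\}$ we have $m\le k-1$; as $\Theta$ is irreducible, $m$ is constant on a dense open subset, so generically $x^{m}y\neq 0$ while $x^{m+1}y=0$. For the fixed general $[y]$, the vanishing $(x')^{m+1}y=0$ is a closed condition on $x'$ that holds on a dense subset of $\tilde{F}_y$, hence it holds identically on $\tilde{F}_y$.

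I would then differentiate this identity. For a tangent vector $v\in T_{\tilde{F}_y,x}$, choosing a curve $x+tv+t^2(\cdots)$ inside $\tilde{F}_y$ and expanding gives
\[
0=(x+tv+t^2(\cdots))^{m+1}\,y=x^{m+1}y+(m+1)\,t\,v\,x^{m}y+t^2(\cdots),
\]
and, since $x^{m+1}y=0$ and $\Char\bK=0$, this forces $v\,x^{m}y=0$. Thus $T_{\tilde{F}_y,x}\subseteq K^1_{x^{m}y}$. Now $x^{m}y$ is a \emph{nonzero} element of $R^{m+1}$ with $1\le m+1\le k\le N-2$, so Proposition \ref{PROP:SameKer}(a) yields $\dim K^1_{x^{m}y}\le \tfrac{m+1}{d-2}\le \tfrac{k}{d-2}$. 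Combining the inequalities, $\dim F_y\le \tfrac{k}{d-2}-1<n-1$ by the hypothesis $n>\tfrac{k}{d-2}$, and since $\dim F_y$ is an integer we conclude $\dim F_y\le n-2$. For the final assertion, when $d=3$ one has $\tfrac{k}{d-2}=k$ and $N=n+1$, so the standing constraint $k\le N-2=n-1$ automatically gives $n>k$; hence $\dim F_y\le n-2$ and $F_y$ cannot be a hypersurface.

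I expect the only genuinely delicate point to be exactly the one the argument is built around. The first-order relation coming directly from membership in $\Theta$ is $v\,x^{k-1}y=0$ (the $\beta=1$ instance of Lemma \ref{LEM:xtangy}(b)), which is vacuous precisely when $x^{k-1}y=0$, so it does not by itself bound the tangent space. The remedy, and the crux of the proof, is to pass to the top nonvanishing power $x^{m}y$ and to verify that $x^{m+1}y=0$ propagates from the generic point of $\Theta$ to the whole cone $\tilde{F}_y$, so that it may legitimately be differentiated; once this is in place, the rest is a routine dimension count via Proposition \ref{PROP:SameKer}(a).
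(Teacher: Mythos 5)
Your proof is correct, but it takes a genuinely different route from the paper's. The paper argues by contradiction: assuming $\dim(F_y)=n-1$, it differentiates the fixed relation $x^ky=0$ (via Lemma \ref{LEM:xtangy}(b)) to get $T_{\tilde{F}_y,x}\subseteq K^1_{x^{k-1}y}$, and since the hypothesis $n>\frac{k}{d-2}$ is incompatible with the bound from Proposition \ref{PROP:SameKer}(a), it concludes $x^{k-1}y=0$ identically on $\Theta$; it then identifies $\Theta=\Theta_{k-1}$ inside $\Gamma_{k-1}$ and iterates this descent all the way down to $\Gamma_1$, where the contradiction comes from the known validity of $WLP_1$ (Remark \ref{REM:WLP1}, i.e.\ the external input from \cite{MN13} and \cite{BFP}). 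You short-circuit the descent: by passing at once to the maximal $m$ with $x^my\neq 0$ generically on $\Theta$, the relation $x^{m+1}y=0$ holds identically on $\Theta$, hence on $F_y\times[y]\subseteq\Theta$, so it can be differentiated non-vacuously, and Proposition \ref{PROP:SameKer}(a) applies to the \emph{nonzero} element $x^my\in R^{m+1}$. This buys three things: the proof is direct rather than by contradiction; it needs no appeal to $WLP_1$ (even the degenerate case $m=0$ is absorbed by the same bound, since then $T_{\tilde{F}_y,x}\subseteq K^1_y$ and $\dim K^1_y\le \frac{1}{d-2}$); and it yields the sharper estimate $\dim(F_y)\le\frac{m+1}{d-2}-1\le\frac{k}{d-2}-1$, which for $d=3$ recovers the conclusion of Lemma \ref{LEM:tech1}(a) without its hypothesis that $SLP_1(k-1)$ holds. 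The one point you should spell out (at the same level of rigor the paper itself uses for generality arguments) is that for $[y]\in Y$ general the open locus $\{x^my\neq 0\}$ meets every top-dimensional component of $F_y$, so that your chosen point is simultaneously a smooth point of $\tilde{F}_y$ and satisfies $x^my\neq 0$: this follows from the standard fiber-dimension count, since the complement of that locus is a proper closed subset of the irreducible variety $\Theta$, whose general $\pi_2$-fibers therefore have dimension strictly smaller than $\dim\Theta-\dim Y$, and so it cannot contain any component of the general fiber.
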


\begin{proof}
Recall that $\dim(F_y)\le n-1$ by Lemma \ref{LEM:easyresults} so we have to rule out only the case $\dim(F_y)=n-1$.

Let us assume, by contradiction, that $F_y$ is an hypersurface. Hence, by denoting with $\tilde{F}_y$ the affine cone over $F_y$, we have $\dim(\tilde{F}_y)=n$.
\medskip

Recall that $\Theta:=\Theta_k\subseteq \Gamma_k=\{([x],[y])\,|\, x^ky=0\}$ by assumption. We will show that the multiplication map $x^{k-1}\cdot:R^1\to R^{k}$ is never injective so we can define, as we have done for $\Theta_k$, an incidence correspondence $\Gamma_{k-1}$ with a unique irreducible component $\Theta_{k-1}$ which dominates $\bP(R^1)$ via its first projection. Moreover, we will have $\Theta_k=\Theta_{k-1}$ so $F_y$ is also the fiber of the second projection from $\Theta_{k-1}$ and we can iterate this process.
\medskip

We claim now that $\Theta\subseteq \Gamma_{k-1}$. If $p=([x],[y])\in\Theta$ is general, by using Lemma \ref{LEM:xtangy} and Proposition \ref{PROP:SameKer} we can conclude 
$$n=\dim(T_{\tilde{F_y},x})\le\dim(K^1_{x^{k-1}y})\leq \frac{k}{d-2}$$
unless $x^{k-1}y=0$. Since, by hypothesis, we have that $n>k/(d-2)$, the only possibility is that $x^{k-1}y=0$. In particular we have shown that $\Theta\subseteq \{([x],[y])\,|\, x^{k-1}y=0\}=\Gamma_{k-1}$ as claimed.
\medskip

Then, $\Theta$ is contained in $\Theta_{k-1}$ since it dominates $\bP(R^1)$. On the other hand, since $\Gamma_{k-1}\subseteq \Gamma_{k}$, we have also the other inclusion: $\Theta=\Theta_{k-1}$. In particular, the varieties $Y$ and $F_y$ defined for $\Theta$ are the same as the ones defined for $\Theta_{k-1}$. Then, by reasoning as before, we obtain $n=\dim(T_{\tilde{F}_y,x})\leq \dim(K^1_{x^{k-2}y})$. If we assume that $x^{k-2}y\neq 0$ for $p$ general in $\Theta$, by Proposition \ref{PROP:SameKer} we would obtain $n\leq (k-1)/(d-2)\leq k/(d-2)$ which is, as before, incompatible with the hypothesis on $n$. Then $\Theta\subseteq \Gamma_{k-2}$ and we can iterate this process.
\medskip

By recursion, we reduce ourselves to the case with $k=1$. We can then see $\Theta$ as a subvariety of $\Gamma_1$ which dominates $\bP(R^1)$ via its first projection. This implies the failure of the weak Lefschetz Property in degree $1$. Then, by Remark \ref{REM:WLP1}, we get a contradiction: $F_y$ has dimension at most $n-2$, as claimed.
\end{proof}


\begin{proposition}
\label{PROP:dimYdiversoda_nmeno2}
Assume that $d=3$. Then, the dimension of $Y$ is at most $n-3$.
\end{proposition}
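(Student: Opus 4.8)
The plan is to establish the sharper bound $\dim(Y)\le k-2$; since $d=3$ forces $N=n+1$ and the running hypothesis of the subsection gives $2\le k\le N-2=n-1$, this immediately yields $\dim(Y)\le n-3$.

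I would first extract two facts from Proposition \ref{PROP:KERCOKERDelux} applied to $T=\Theta\subseteq\Gamma_k=\Gamma_{k,1}^{(1,1)}$. With the index pair $(i,j)=(1,k)$, which is admissible because $ai+b(j+1)=k+2\le N$, part $(1)$ gives $v\,y^{k+1}=0$ for all $v\in R^1$ and then part $(2)$ upgrades this to $y^{k+1}=0$ on all of $\Theta$; hence $Y\subseteq\cN_{k+1}$. The same proposition (its ``in particular'' clause, valid since $k\le N-2$) gives $\Theta\subseteq\Gamma_{1,k}^{(1,1)}$, i.e. $xy^k=0$, so that $x\in K^1_{y^k}$ for every $([x],[y])\in\Theta$. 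I then split according to whether the general $[y]\in Y$ is $k$-nihilpotent. If $Y\subseteq\cN_k$, then Remark \ref{REM:dimnihil} gives $\dim(Y)\le\dim(\cN_k)\le k-2$ and we are done.

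In the remaining case $y^k\neq0$ for $[y]\in Y$ general. Fix a general $p=([x],[y])\in\Theta$, so that, by dominance of $\pi_1$ and $\pi_2$, $[x]$ is general in $\bP(R^1)$ and $[y]$ is a smooth point of $Y$ with $y^k\neq0$. Differentiating the identity $(\,\cdot\,)^{k+1}\equiv0$ along $\tilde{Y}$ shows $T_{\tilde{Y},y}\subseteq K^1_{y^k}$, while Step~1 already places $x$ in $K^1_{y^k}$. The crux is that $x\notin T_{Y,[y]}$: this is the content of Lemma \ref{LEM:xtangy}(a), but since our range $k=N-2$ may exceed $N/2$ I would re-derive it directly. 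Assuming $x\in T_{\tilde{Y},y}$, surjectivity of $d_p\tilde{\pi}_2$ produces a tangent vector $(v,x)\in T_{\tilde{\Theta},p}$; differentiating $x^ky=0$ gives $kv\,x^{k-1}y+x^{k+1}=0$, and multiplying by $x$ and using $x^ky=0$ forces $x^{k+2}=0$. This is impossible for general $x$, because $\dim(\cN_{k+2})\le k<n$ by Remark \ref{REM:dimnihil}, so indeed $x\notin T_{Y,[y]}$.

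Combining these three facts, $T_{\tilde{Y},y}$ is a proper linear subspace of $K^1_{y^k}$ (it omits $x$), whence $\dim(Y)+1=\dim(T_{\tilde{Y},y})\le\dim(K^1_{y^k})-1\le k-1$, where the last inequality is Remark \ref{REM:dimnihil} applied to $[y^k]\in\bP(R^k)$, legitimate because $y^k\neq0$. Therefore $\dim(Y)\le k-2\le n-3$. The step I expect to require the most care is verifying that a general point of $\Theta$ has both coordinates general, so that the smoothness of $Y$ at $[y]$, the surjectivity of $d_p\tilde{\pi}_2$, and the genericity of $[x]$ needed for the $x^{k+2}=0$ contradiction all hold simultaneously; the self-contained replacement for Lemma \ref{LEM:xtangy}(a) through $x^{k+2}=0$ is the other point to handle carefully.
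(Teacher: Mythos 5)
Your proof is correct, and while it uses the same basic toolkit as the paper (Proposition \ref{PROP:KERCOKERDelux} to get $Y\subseteq\cN_{k+1}$ and $xy^k=0$ on $\Theta$, the kernel bounds of Proposition \ref{PROP:SameKer}, and the non-tangency of $x$ to $Y$), it is organized genuinely differently. The paper splits on the value of $k$: for $k\le N-3$ it needs nothing beyond the dimension count $\dim(Y)\le\dim(\cN_{k+1})\le k-1\le n-3$, and the tangency argument enters only in the boundary case $k=N-2$, where assuming $\dim(Y)=n-2$ makes $Y$ a component of $\cN_n$, so that $T_y(Y)=\bP(K^1_{y^{n-1}})$ by the equality case of Lemma \ref{LEM:tangnihil}; since the general $[x]\in F_y$ lies in $\bP(K^1_{y^{n-1}})$, this puts $[x]\in T_y(Y)$, contradicting Lemma \ref{LEM:xtangy}(a). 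You instead split on whether $Y\subseteq\cN_k$, and in the nondegenerate case run the tangency argument uniformly in $k$, deducing from the proper inclusion $T_{\tilde{Y},y}\subsetneq K^1_{y^k}$ (proper because it misses $x$) the sharper bound $\dim(Y)\le k-2$; this covers the boundary case without Lemma \ref{LEM:easyresults}(c) or the component analysis of $\cN_n$. Two further points are in your favour. First, your caution about Lemma \ref{LEM:xtangy} is justified: it is stated for $1\le k\le N/2$, whereas here $k$ can equal $N-2$, so the paper invokes it outside its stated range, and your self-contained substitute removes this issue. Second, your substitute also repairs a computational jump: from the first-order relation $kvx^{k-1}y+x^{k+1}=0$ one cannot directly conclude $x^{k+1}=0$ (as the proof of Lemma \ref{LEM:xtangy} asserts), since $vx^{k-1}y=0$ is not known for the particular $v$ at hand; your step of multiplying by $x$ and using $x^ky=0$ in $R$ to obtain $x^{k+2}=0$ --- still absurd for $[x]$ general, since $k+2\le N$ and $\dim(\cN_{k+2})\le k<n$ --- is exactly what is needed to close that argument.
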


\begin{proof}
First of all, let us notice that if $k\le N-3=n-2$, then by Proposition \ref{PROP:KERCOKERDelux}, $Y$ is contained in $\cN_{k+1}\subseteq \cN_{n-1}$, whose dimension is at most $n-3$ (see Lemma \ref{LEM:tangnihil}). Hence, we easily get that in this case $\dim(Y)\le n-3$ as claimed.
\medskip

Let us now consider the remaining case: $k=N-2=n-1$. Recall that $\dim(Y)\leq n-2$ by Lemma \ref{LEM:easyresults} so, to prove the proposition, we only have to rule out the case $\dim(Y)=n-2$. Assume by contradiction that $\dim(Y)=n-2$. Since $k=n-1$, proceeding as above, we have that $Y$ is contained in $\cN_n$, which has dimension at most $n-2$. Hence, we get that $Y$ is a component of $\cN_n$. Then, if $[y]\in Y$ is a general point ($y^{n-1}\ne0$, since $Y\not\subseteq \cN_{n-1}$, for dimension reasons), we can write $T_y(Y)=T_y(\cN_{n})=\bP(K^1_{y^{n-1}})$ by Lemma \ref{LEM:tangnihil}. However, the general $[x]\in F_y$ belongs to $\bP(K^1_{y^{n-1}})$ and then to $T_y(Y)$, contradicting Lemma \ref{LEM:xtangy}.
\end{proof}


\begin{proposition}
\label{PROP:2,3}
Assume that $d=3$. Then, the following conditions are not compatible: 
\begin{enumerate}[(a)]
\item for $y\in Y$ general, $\dim(F_y)=k-1$;
\item $Y\not\subseteq \cN_{k-1}$.
\end{enumerate}
\end{proposition}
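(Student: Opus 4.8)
The plan is to assume that both (a) and (b) hold and to derive a contradiction with the assertion $x\notin T_{Y,[y]}$ of Lemma \ref{LEM:xtangy}(a). First I would dispose of the small values of $k$: by Lemma \ref{LEM:easyresults}(a) together with (a) we have $\dim(Y)=\dim(\Theta)-\dim(F_y)\ge n-(k-1)$, while Proposition \ref{PROP:dimYdiversoda_nmeno2} gives $\dim(Y)\le n-3$; these are incompatible as soon as $k\le 3$, so from now on I may assume $k\ge 4$. Next I would show that the product $xy^{k-1}$ does not vanish for the general $([x],[y])\in\Theta$: if $xy^{k-1}=0$ held generically then $F_y\subseteq\bP(K^1_{y^{k-1}})$, and since $y^{k-1}\ne 0$ by (b), Proposition \ref{PROP:SameKer}(a) would force $\dim(F_y)\le\dim(K^1_{y^{k-1}})-1\le k-2$, against (a).

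Knowing $xy^{k-1}\ne 0$, I would pin down the tangent space of the fibre. By Lemma \ref{LEM:xtangy}(b) applied with $(\alpha,\beta)=(1,k-1)$ we get $T_{\tilde{F}_y,x}\subseteq K^1_{xy^{k-1}}$, and since $\dim(\tilde{F}_y)=k$ by (a) while $\dim(K^1_{xy^{k-1}})\le k$ by Proposition \ref{PROP:SameKer}(a), this inclusion is an equality with $\dim(K^1_{xy^{k-1}})=k$. Now $K^1_{y^{k-1}}\subseteq K^1_{xy^{k-1}}$ (anything killing $y^{k-1}$ kills $xy^{k-1}$), and $x\in K^1_{xy^{k-1}}$ because $x^2y^{k-1}=0$ by Proposition \ref{PROP:KERCOKERDelux} (here $2+(k-1)=k+1$ and $k\le N-2$); since $x\notin K^1_{y^{k-1}}$ we obtain $K^1_{xy^{k-1}}=K^1_{y^{k-1}}\oplus\langle x\rangle$ and hence $\dim(K^1_{y^{k-1}})=k-1$. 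The crucial consequence is that the fixed subspace $W:=K^1_{y^{k-1}}$, which depends only on $[y]$, is contained in $T_{\tilde{F}_y,x}$ for every general $x\in\tilde{F}_y$. Thus $\tilde{F}_y$ is invariant under translations by $W$, and being a cone of dimension $k=\dim(W)+1$ it must coincide with the linear space $W\oplus\langle x\rangle$. In other words $F_y$ is a $(k-1)$-plane containing the hyperplane $\bP(W)=\bP(K^1_{y^{k-1}})$.

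With $F_y$ linear I would extract the contradiction from Lemma \ref{LEM:xtangy}(a). Since $F_y$ is linear, $T_{F_y,[x]}=F_y$, so the relation $y\in T_{F_y,[x]}$ merely records $y\in\tilde{F}_y=K^1_{y^{k-1}}\oplus\langle x\rangle$. On the other hand $Y\subseteq\cN_{k+1}$ by Proposition \ref{PROP:KERCOKERDelux}, whence $T_{\tilde{Y},y}\subseteq K^1_{y^k}$; moreover, by Construction \ref{CONSTR:OURFRAMEWORK} the whole linear fibre $\bP(K^1_{x'^k})$ is contained in $Y$ for the general $[x']$, which feeds linear subspaces of $Y$ through $[y]$ into $T_{Y,[y]}$. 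Combining these facts with the now-explicit linear space $F_y$ and the identity $\dim(K^1_{y^{k-1}})=k-1$, the aim is to show that $x$ itself is a tangent direction to $Y$ at $[y]$, i.e. $x\in T_{Y,[y]}$, contradicting Lemma \ref{LEM:xtangy}(a).

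I expect this last step to be the main obstacle. Steps one through three are essentially forced and linear-algebraic, but the final comparison requires controlling $T_{Y,[y]}$ precisely, not merely up to the ambient $\bP(K^1_{y^k})$, and then certifying that the single vector $x$ is a genuine tangent direction to $Y$. This is delicate because $\dim(Y)$ need not equal $\dim(K^1_{y^k})-1$, so a naive dimension count does not close the argument; one has to exploit the explicit linear structure of $F_y$ and the genericity of $([x],[y])$ in $\Theta$ to manufacture the missing tangent direction. I anticipate it is cleanest to treat separately the regimes $y^k\ne 0$, where Lemma \ref{LEM:xtangy}(a) upgrades the description to $\tilde{F}_y=K^1_{y^{k-1}}\oplus\langle y\rangle=K^1_{y^k}$, and $y^k=0$, where $[y]\in\bP(K^1_{y^{k-1}})$, $Y$ is a component of $\cN_k$, and $T_{Y,[y]}=\bP(K^1_{y^{k-1}})$ by Lemma \ref{LEM:tangnihil}, since the identification of $T_{Y,[y]}$ is genuinely different in the two cases.
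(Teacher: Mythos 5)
Your proposal has two gaps, one of which is the heart of the matter. The local gap is in your second paragraph: from the inclusion $K^1_{y^{k-1}}\oplus\langle x\rangle\subseteq K^1_{xy^{k-1}}$ and $\dim K^1_{xy^{k-1}}=k$ you conclude that the inclusion is an equality and that $\dim K^1_{y^{k-1}}=k-1$. This is a non sequitur: the inclusion only yields $\dim K^1_{y^{k-1}}\leq k-1$ (the same bound Proposition \ref{PROP:SameKer} already gives), and nothing forces equality. Consequently the identification $\tilde{F}_y=K^1_{y^{k-1}}\oplus\langle x\rangle$, i.e.\ the linearity of $F_y$, is not established. When $y^k\neq 0$ linearity can be rescued directly ($xy^k=0$ gives $F_y\subseteq\bP(K^1_{y^k})$, and both have dimension $k-1$, so they coincide), but hypothesis (b) only excludes $Y\subseteq\cN_{k-1}$, so the regime $y^k=0$ is genuinely allowed, and there your construction of $W\oplus\langle x\rangle$ collapses.

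The decisive gap is that the contradiction is never reached: you state the aim (produce $x\in T_{Y,[y]}$ against Lemma \ref{LEM:xtangy}(a)) and explicitly acknowledge you do not know how to carry it out. This missing step is exactly where the paper's proof does its work, and by a different mechanism that needs neither linearity of $F_y$ nor the vector $x$ as a tangent direction. The paper first proves $T_y(Y)\subseteq T_x(F_y)$ for general $([x],[y])\in\Theta$ (splitting into the cases $Y\not\subseteq\cN_k$ and $Y\subseteq\cN_k$, via Lemma \ref{LEM:tangnihil}), then lifts an arbitrary $w\in T_{\tilde{Y},y}$ to $(v,w)\in T_{\tilde{\Theta},p}$ using the surjectivity of $d_p\tilde{\pi}_2$, and substitutes it into the first-order equations of $\tilde{\Gamma}_{k-1,2}$ and $\tilde{\Gamma}_{k,1}$, namely $(k-1)vx^{k-2}y^2+2x^{k-1}yw=0$ and $kvx^{k-1}y+x^kw=0$. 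Since $[w]\in T_x(F_y)=\bP(K^1_{x^{k-1}y})=\bP(K^1_{x^{k-2}y^2})$, the first equation gives $v\in K^1_{x^{k-1}y}$, and then the second gives $x^kw=0$. Hence $T_y(Y)\subseteq\bP(K^1_{x^k})=\pi_1^{-1}([x])$, so $\dim Y\leq\dim\pi_1^{-1}([x])=\dim\Theta-n=\dim Y+\dim F_y-n$, forcing $\dim F_y\geq n$, which contradicts the bound $\dim F_y\leq n-1$ of Lemma \ref{LEM:easyresults}. Without an argument of this kind (bounding the whole tangent space of $Y$ inside a fibre of $\pi_1$ and finishing with a dimension count), your outline does not constitute a proof.
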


\begin{proof}
Notice that for $k=1$, condition $(a)$ cannot hold since otherwise we would have $\dim(Y)=n$, which is impossible. Hence we can assume that $k\geq 2$.
\medskip

Let us observe that by Proposition \ref{PROP:KERCOKERDelux} we have that $x^{\alpha}y^{\beta}=0$ for every $\alpha,\beta$ such that $\alpha+\beta=k+1$ and $\beta\ge1$; in particular we have $y^{k+1}=0$ and $Y\subseteq \cN_{k+1}$.
\medskip

Let us assume by contradiction that both conditions $(a)$ and $(b)$ hold. By $(b)$ we have $y^{k-1}\ne0$ for $y\in Y$ general so for $([x],[y])$ general in $\Theta$ we have $xy^{k-1}\ne0$. Indeed, otherwise, $F_y$ would be contained in $\bP(K^1_{y^{k-1}})$, whose dimension is at most $k-2$, which is impossible by assumption. As a consequence, we have that
\begin{equation}
\label{EQ:2.3_1}
\mbox{for }\quad ([x],[y])\in \Theta\quad \mbox{general, }\quad x^{\alpha}y^{\beta}\ne0\quad \mbox{ for }\quad \alpha+\beta=k \quad \mbox{ with }\quad \alpha,\beta\ge1
\end{equation}
since, otherwise, by using Proposition \ref{PROP:KERCOKERDelux}, we would obtain also $xy^{k-1}=0$.
\medskip

By property \eqref{EQ:2.3_1} and since $\dim(F_y)=k-1$ by assumption, we also have
\begin{equation}
\label{EQ:2.3_2}
T_x(F_y)=\bP(K^1_{x^{k-1}y})=\bP(K^1_{x^{k-2}y^2})
\end{equation}
by Lemma \ref{LEM:xtangy}.

Let us now claim that 
\begin{equation}
\label{EQ:2.3_3}
\mbox{for }\quad ([x],[y])\in \Theta\quad \mbox{general, }\quad T_y(Y)\subseteq T_x(F_y).
\end{equation}

To show this, first of all, recall that $Y\not \subseteq \cN_{k-1}$ and $Y\subseteq \cN_{k+1}$. Let us now consider two cases:
$$1)\quad Y\not\subseteq \cN_k\qquad \qquad \mbox{ and }\qquad\qquad  2)\quad Y\subseteq \cN_k.$$
Assume that $p=([x],[y])\in \Theta$ is general (so that $[y]$ is general in $Y$ and $[x]$ is general in $F_y$).
In the first case, since $Y$ is contained in $\cN_{k+1}$, we have $T_y(Y)\subseteq\bP(K^1_{y^k})$ by Lemma \ref{LEM:tangnihil}. Moreover, we have that $\bP(K^1_{y^k})=T_x(F_y)$ since $y^k\ne0$ and $\dim(F_y)=k-1$. Analogously, for the second case we have
$T_y(Y)\subseteq\bP(K^1_{y^{k-1}})\subseteq \bP(K^1_{xy^{k-1}})=T_x(F_y)$.
Here, we have used that $xy^{k-1}\neq 0$ since $p$ is general (by property \eqref{EQ:2.3_1}).
\medskip

Consider, as in Lemma \ref{LEM:xtangy}, the affine cone $\tilde{Y}$ of $Y$, the lifting $\tilde{\Theta}$ of $\Theta\subset \bP(R^1)\times \bP(R^1)$ to $R^1\times R^1$ and its projection $\tilde{\pi}_2$ on the second factor. By construction, we have that
$\tilde{\Theta}\subseteq \{(x,y)\,|\, x^\alpha y^\beta=0\}=\tilde{\Gamma}_{\alpha,\beta}$ whenever $\alpha+\beta=k+1$ and $\beta\geq 1$.
As in Lemma \ref{LEM:xtangy}, for $p=(x,y)\in \tilde{\Theta}$ general, the differential map
$$d_p\tilde{\pi}_2:T_{\tilde{\Theta},p}\to T_{\tilde{Y},y}$$ is surjective.

Let us take any $w$ in $T_{\tilde{Y},y}$. By Property \eqref{EQ:2.3_3}, we have that its class $[w]$ belongs to $T_x(F_y)$. Moreover, by the surjectivity of $d_p\tilde{\pi}_2$, we can take in $T_{\tilde{\Theta},p}$ an element of the form $(v,w)$.

The tangent space to $\tilde{\Theta}$ in $p$ is a subspace of the tangent space $T_{\alpha,\beta}=T_{\tilde{\Gamma}_{\alpha,\beta},p}$ to the locus $\tilde{\Gamma}_{\alpha,\beta}$ in $p$, so we have $(v,w)\in T_{\alpha,\beta}$. Then, we have 
$$0\equiv (x+tv+t^2(\cdots))^\alpha(y+tw+t^2(\cdots))^\beta\, (\mbox{mod }\, t^2)$$
for $\alpha,\beta$ as above. In particular, by taking $(\alpha,\beta)$ equal to $(k-1,2)$ and $(k,1)$ we obtain the following relations satisfied by $(v,w)$:
$$(k-1)vx^{k-2}y^2+2x^{k-1}yw=0 \qquad kvx^{k-1}y+x^kw=0.$$
Since $[w]\in T_x(F_y)$, by property \eqref{EQ:2.3_2}, we have $x^{k-1}yw=0$. Then, from the first equation we get $v\in K^1_{x^{k-2}y^2}=K^1_{x^{k-1}y}$ (again by property \eqref{EQ:2.3_2}).
Hence the second equation yields $x^kw=0$. In conclusion, we have proven that $T_{y}(Y)\subset \bP(K^1_{x^k})=\pi_1^{-1}([x])$. We stress that the last equality holds since $[x]$ is general and then, the whole fiber over $[x]$ with respect to $p_1$ is contained in $\Theta$ so $\bP(K^1_{x^k})=p_1^{-1}([x])=\pi_1^{-1}([x])$.

This easily brings to a contradiction. Indeed, the above property implies that $\dim(Y)\le\dim(\pi_1^{-1}([x]))$ for $[x]\in\bP(R^1)$ general, and since
$$\dim(\Theta)=\dim(Y)+\dim(F_y)=\dim(\pi_1^{-1}([x]))+n$$

we also get $\dim(Y)\le\dim(Y)+\dim(F_y)-n$, which is impossible by Lemma \ref{LEM:easyresults}(c).
\end{proof}



\section{A lifting criterion for weak Lefschetz property}
\label{SEC:WLP}

It is known that, the $SLP$ for a graded algebra is inherited to its quotients by suitable conductor ideals (see, for example, \cite[Proposition 3.11]{bookLef}).
In this section we prove a sort of converse for $WLP_2$ for complete intersection SAGAs presented by quadrics. More precisely, we will give a criterion to reduce the proof of $WLP_2$ for a SAGA $R$ as above to a suitable quotient of $R$, modulo the existence of a non-Lefschetz element. We stress, moreover, that this criterion works for any codimension. 
\medskip

Let us start by setting $S=\bK[x_0,\dots, x_n]$ and by proving the following result.

\begin{lemma}
\label{LEM:quotsaga}
Assume that $R=S/J$ is a complete intersection SAGA of codimension $n+1$ presented by quadrics so the socle is in degree $N=n+1$. Assume that there are $w,z\in R^1\setminus\{0\}$ such that $zw=0$. Then $(z)=(0:w)$ and $\bar{R}=R/(z)$ is a complete intersection SAGA of codimension $n$ presented by quadrics. In particular, $\dim(K^s_w)=\dim(R^{s-1})-\dim(K^{s-1}_z)$.
\end{lemma}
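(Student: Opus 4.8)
The plan is to reduce everything to a Hilbert‑function computation for $\bar R$ together with the simplicity of the socle of a Gorenstein algebra. Since $zw=0$ with $z,w\neq 0$, Proposition \ref{PROP:SameKer}(a) applied with $s=1$ and $d-1=2$ gives $\dim(K^1_z)\le 1$; as $w\in K^1_z\setminus\{0\}$ we get $\dim(K^1_z)=1$ and $K^1_z=\langle w\rangle$ (and symmetrically $K^1_w=\langle z\rangle$). In particular $(z)\cap R^1=\langle z\rangle$ is one dimensional, so $\dim(\bar R^1)=n$ and the codimension drops by exactly one.

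First I would pin down the ring structure of $\bar R$ through an explicit presentation. After a linear change of coordinates I may assume that $z$ is the class of $x_n$, so that $\bar R=R/(z)=S'/(\bar q_0,\dots,\bar q_n)$, where $S'=\bK[x_0,\dots,x_{n-1}]$ and $\bar q_i$ is the reduction of the defining quadric $q_i$ modulo $x_n$. The heart of the argument is a dimension count in degree $2$: using $\dim(zR^1)=\dim(R^1)-\dim(K^1_z)=n$ one finds $\dim(\bar R^2)=\dim(R^2)-n=\binom{n}{2}$, so the degree‑$2$ part of the defining ideal $\bar I=(\bar q_0,\dots,\bar q_n)$ has dimension $\dim(S'^{\,2})-\binom{n}{2}=n$. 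Hence the $n+1$ quadrics $\bar q_0,\dots,\bar q_n$ span only an $n$‑dimensional space and are therefore $\bK$‑linearly dependent; discarding one redundant generator, $\bar I$ is generated by $n$ quadrics. Since $\bar R$ is a quotient of the Artinian ring $R$ it is Artinian, so these $n$ quadrics have the origin as their only common zero in $S'$ and thus form a regular sequence. This shows at once that $\bar R$ is a complete intersection SAGA of codimension $n$ presented by quadrics (complete intersections being Gorenstein), with socle in degree $N-1=n$.

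It remains to identify $(z)$ with the annihilator $(0:w)$. The inclusion $(z)\subseteq(0:w)$ is immediate from $zw=0$, and it yields a surjection $\bar R=R/(z)\twoheadrightarrow R/(0:w)$. I would conclude that this is an isomorphism by comparing top degrees. On one side $\bar R$ is Gorenstein with one‑dimensional socle $\bar R^n$; on the other, $(0:w)\cap R^n=K^n_w$ is a \emph{proper} subspace of $R^n$, because $w\neq 0$ and the perfect Gorenstein pairing $R^1\times R^n\to R^{n+1}$ force $wR^n=R^{n+1}\neq 0$, so $R/(0:w)$ is nonzero in degree $n$. Since in a graded Artinian Gorenstein algebra every nonzero ideal contains the simple socle, a nonzero kernel of $\bar R\twoheadrightarrow R/(0:w)$ would contain $\bar R^n$ and kill degree $n$, a contradiction; hence the kernel is zero and $(z)=(0:w)$. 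The displayed dimension formula is then just the degree‑$s$ incarnation of this equality: $K^s_w=(0:w)\cap R^s=(z)\cap R^s=zR^{s-1}$, whence $\dim(K^s_w)=\dim(R^{s-1})-\dim(K^{s-1}_z)$.

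I expect the main obstacle to be the middle step, namely controlling the passage from the $n+1$ reductions $\bar q_i$ to a genuine regular sequence of $n$ quadrics. The linear dependence of the $\bar q_i$ is forced by the degree‑$2$ Hilbert value $\binom{n}{2}$, which itself rests on the sharp bound $\dim(K^1_z)=1$ coming from Proposition \ref{PROP:SameKer}; verifying that the surviving $n$ quadrics are actually regular (and not merely the right \emph{number} of forms) is what uses the Artinian‑ness of $\bar R$ in an essential way. Once $\bar R$ is known to be Gorenstein with socle degree $n$, both the annihilator identity and the dimension formula follow formally from the socle comparison above.
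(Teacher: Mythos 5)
Your proof is correct, and it follows the same two-step skeleton as the paper's: first present $\bar{R}$ as a quotient of a polynomial ring in $n$ variables by $n$ quadrics and use Artinian-ness to conclude that they form a regular sequence, then identify $(z)$ with $(0:w)$ by a Gorenstein socle argument. The differences lie in how each step is implemented. For the presentation of $\bar{R}$, the paper bypasses your degree-$2$ Hilbert-function count by choosing generators of $J$ adapted to the situation: since $zw\in J$ is a quadric, one completes $zw$ to a regular sequence $\{g_0,\dots,g_{n-1},zw\}$ generating $J$, and then $zw$ reduces to zero modulo $(z)$, so exactly $n$ quadrics $\bar{g}_i$ survive --- the linear dependence you extract from $\dim(\bar{I}_2)=n$ is built into the choice of basis, while the verification that the survivors form a regular sequence (via Artinian-ness of $\bar{R}$) is the same in both arguments. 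For the identification $(z)=(0:w)$, the paper cites \cite{FP21} to know that $R/(0:w)$ is a SAGA of codimension $n$ with socle in degree $n$, and then proves injectivity of the epimorphism $\bar{R}\to R/(0:w)$ by pairing any nonzero homogeneous element of $\bar{R}$ up to a socle generator, whose image is nonzero; your argument replaces the citation by the direct observation that the perfect pairing $R^1\times R^n\to R^{n+1}$ forces $(R/(0:w))^n\neq 0$, and then invokes the equivalent fact that every nonzero homogeneous ideal of the Artinian Gorenstein algebra $\bar{R}$ contains its socle $\bar{R}^n$. So your route is slightly more self-contained (no external lemma, no adapted generators, and it makes explicit why $\dim(K^1_z)=1$ via Proposition \ref{PROP:SameKer}), at the price of the Hilbert computation; the paper's is shorter because the adapted generators make the redundancy of one reduced quadric automatic.
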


\begin{proof}
By definition, we have $(z)\subseteq (0:w)$, so we can define an epimorphism of graded $\bK$-algebras $$\varphi:\bar{R}:=R/(z)\rightarrow \tilde{R}:=R/(0:w).$$
By \cite[Lemma 2.3]{FP21}, the latter is a SAGA of codimension $n$ and socle in degree $\tilde{N}=n$.
Considering $\bar{R}$, it is clearly an Artinian standard algebra of codimension $n$. We want to show that $\bar{R}$ is also a complete intersection SAGA presented by quadrics. 
By hypothesis, we know that $zw\in J$, so we can complete $\{zw\}$ to a regular sequence of the form $\{g_0,\cdots,g_{n-1},zw\}$ spanning $J$. Notice that, by construction, $g_0,\cdots,g_{n-1}$ do not belong to the ideal $(z)$ and the reductions $\bar{g}_i$ of $g_i$ modulo $(z)$ are a regular sequence of quadrics in the polynomial ring $\bar{S}=S/(z)$.

Hence we have
$$\bar{R}=R/(z)=\frac{S/J}{(z)}\simeq\frac{\bar{S}}{(\bar{g}_0,\cdots,\bar{g}_{n-1})}$$
so $\bar{R}$ is a complete intersection SAGA presented by quadrics. In particular, it has socle in degree $\bar{N}=n=\tilde{N}$.

Since $\varphi$ is an epimorphism and preserve the degrees, the image of a generator $\bar{\sigma}$ of $\bar{R}^n$ is a non-zero multiple of the generator $\tilde{\sigma}$ of $\tilde{R}^n$. This also implies the injectivity of $\varphi$. Indeed, let us take a non-zero element $x\in\bar{R}^i$. There exists $y\in\bar{R}^{n-i}$ such that $xy=\bar{\sigma}$. Hence, we have 
$$\lambda\tilde{\sigma}=\varphi(\bar{\sigma})=\varphi(xy)=\varphi(x)\varphi(y),$$
and so we get that $\varphi(x)$ can not be zero and $\varphi$ is an isomorphism. In particular, for all $s$,
$$R^{s-1}\cdot z=(z)_s=(0:w)_s=K^s_w$$
then we clearly have
$$\dim(K^s_w)=\dim(R^{s-1}z)=\dim(R^{s-1})-\dim(K^{s-1}_z)$$
as claimed.
\end{proof}

We assume again that $R=S/I$ is a complete intersection SAGA presented by quadrics with codimension $n+1$ so that it has socle in degree $N=n+1$. We recall that $[z]\in \bP(R^1)$ is a non-Lefschetz element (in degree $1$), i.e. $z\not \in \cL_1$, if and only if there exists $[w]\in \bP(R^1)$ such that $zw=0$ in $R$ or, equivalently $K^1_{z}=\langle w\rangle$.

\begin{theorem}
\label{THM:induction}
Let $R=S/I$ be a complete intersection SAGA of codimension $n+1\geq 6$ presented by quadrics and assume that $z$ is a non-Lefschetz element for $R$. If $\bar{R}=R/(z)$ satisfies the $WLP_2$, then the same holds for $R$.
\end{theorem}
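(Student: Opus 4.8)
The plan is to compare the multiplication map $x\cdot\colon R^2\to R^3$ with the corresponding maps on the two codimension-$n$ quotients attached to the non-Lefschetz pair $z,w$, and to reassemble injectivity from these pieces via the short exact sequence defining the ideal $(z)$.

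First I would fix the setup. Since $z$ is non-Lefschetz in degree $1$ and $\dim(K^1_z)\le1$ by Proposition \ref{PROP:SameKer}(a), we have $K^1_z=\langle w\rangle$ for a unique $[w]\in\bP(R^1)$ with $zw=0$. Applying Lemma \ref{LEM:quotsaga} to $(z,w)$ shows that $\bar R=R/(z)$ is a complete intersection SAGA of codimension $n$ presented by quadrics; applying it with the roles of $z$ and $w$ exchanged gives the symmetric conclusions $(0:z)=(w)$ and that $R'\colonequals R/(w)$ is also a complete intersection SAGA of codimension $n$ presented by quadrics. The crucial observation is that, for every $s$, multiplication by $z$ induces an isomorphism $(R')^{s-1}\xrightarrow{\ \sim\ }(z)^s=zR^{s-1}$: the map $r\mapsto zr$ is surjective onto $(z)^s$, and its kernel is $(0:z)^{s-1}=(w)^{s-1}$ by the symmetric form of the lemma.

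Next I would transport the two relevant maps through this identification. Under $(R')^{s-1}\cong(z)^s$, multiplication by $x\in R^1$ on $(z)$ becomes multiplication by the image $x'$ of $x$ in $R'$, so that $\ker\bigl(x\cdot\colon(z)^2\to(z)^3\bigr)\cong\ker\bigl(x'\cdot\colon(R')^1\to(R')^2\bigr)$. As $R'$ is a complete intersection SAGA presented by quadrics it satisfies $WLP_1$ (Remark \ref{REM:WLP1}), and since such an algebra of codimension $m$ has Hilbert function $i\mapsto\binom{m}{i}$ we get $\dim(R')^1=n\le\binom{n}{2}=\dim(R')^2$, so $x'\cdot$ is injective for $x'$ general. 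On the other side, $\bar R$ satisfies $WLP_2$ by hypothesis, and since $\dim\bar R^2=\binom{n}{2}\le\binom{n}{3}=\dim\bar R^3$ for $n\ge5$, the map $\bar x\cdot\colon\bar R^2\to\bar R^3$ is injective for the image $\bar x$ of $x$ general in $\bar R^1$.

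Finally I would choose $x$ and conclude. Both conditions --- that $x'$ makes $(R')^1\to(R')^2$ injective and that $\bar x$ makes $\bar R^2\to\bar R^3$ injective --- are pullbacks, along the surjective linear maps $R^1\to(R')^1$ and $R^1\to\bar R^1$, of nonempty Zariski-open sets, hence they define nonempty open subsets of the irreducible space $R^1$, and a general $x$ satisfies both. For such an $x$, take $\xi\in R^2$ with $x\xi=0$: its image in $\bar R^2$ lies in $\ker(\bar x\cdot)=0$, so $\xi\in(z)^2$, and then $x\xi=0$ together with the injectivity of $x\cdot$ on $(z)^2$ forces $\xi=0$. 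Hence $x\cdot\colon R^2\to R^3$ is injective, and since $\dim R^2=\binom{n+1}{2}<\binom{n+1}{3}=\dim R^3$ for $n\ge5$ this is maximal rank, i.e. $R$ satisfies $WLP_2$. I expect the main obstacle to be the structural identification $(z)^s\cong(R')^{s-1}$ and the consequent translation of $x\cdot$ on $(z)$ into $x'\cdot$ on $R'$: this rests essentially on the symmetric use of Lemma \ref{LEM:quotsaga}, which tells us that $(0:z)$ is exactly the principal ideal $(w)$ and that $R/(w)$ is again a complete intersection SAGA carrying its own $WLP_1$. Once that is in place, the diagram chase and the Hilbert-function comparisons are routine.
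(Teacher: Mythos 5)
Your proof is correct, and it takes a genuinely different route from the paper's. The two arguments share the same opening move: since $\bar R=R/(z)$ satisfies $WLP_2$ and $\dim\bar R^2=\binom{n}{2}\le\binom{n}{3}=\dim\bar R^3$, any kernel element of $x\cdot\colon R^2\to R^3$ (for $x$ general) must lie in $(z)_2=zR^1$. From there the paper argues by contradiction using the incidence machinery of Construction \ref{CONSTR:OURFRAMEWORK}: it forms $\Gamma=\{([x],[v])\,:\,xvz=0\}$, shows the first projection is dominant with general fiber of dimension exactly $1$, deduces $Y=\bP(R^1)$ by a dimension count, and then obtains $v^2z=0$ for all $v\in R^1$, hence $zR^2=0$ since squares span $R^2$, contradicting Gorenstein duality. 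You instead dispose of kernel elements inside $(z)_2$ directly: the symmetric application of Lemma \ref{LEM:quotsaga} (which the paper never makes) gives $(0:z)=(w)$ and that $R'=R/(w)$ is again a complete intersection SAGA of codimension $n$ presented by quadrics, whence the graded identification $(z)_s\cong (R')^{s-1}$ compatible with multiplication by $x$; injectivity of $x\cdot$ on $(z)_2$ then becomes exactly $WLP_1$ for $R'$, which holds unconditionally by Remark \ref{REM:WLP1}. Your route is shorter, purely algebraic, avoids both the contradiction and the geometric apparatus, and isolates the true inputs ($WLP_2$ for $R/(z)$ plus $WLP_1$ for $R/(0:z)$); it also visibly generalizes, since the same diagram chase shows that $WLP_k$ for $\bar R$ together with $WLP_{k-1}$ for $R'$ lifts to $WLP_k$ for $R$ whenever the Hilbert-function inequalities make maximal rank mean injectivity. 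What the paper's approach buys is that it needs only the single quotient $\bar R$ and Gorenstein duality, and it exercises the incidence-correspondence technique that powers the rest of the article. Your numerical checks ($n\ge5$ ensuring injectivity equals maximal rank at each stage) and the openness/pullback argument for choosing one $x$ that is simultaneously general for both quotients are all correct.
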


\begin{proof}
First of all, by Lemma \ref{LEM:quotsaga}, we have that $\bar{R}=R/(z)=R/(0:w)$ is a complete intersection SAGA presented by quadrics and  $K^1_z=\langle w\rangle$ . Let  $\mathrm{pr}_z$ be the projection $R\to \bar{R}$. 
\medskip

Assume, by contradiction, that $WLP_2$ holds for $\bar{R}$ but not for $R$. In particular, for all $x\in R^1$, the multiplication map $x\cdot:R^2\rightarrow R^3$ has non trivial kernel, i.e. $K^2_x\ne\{0\}$. Consider the incidence correspondence 
$$\Gamma=\{([x],[v])\in \bP(R^1)\times \bP(R^1)\,|\, xvz=0\}$$
with its projections $p_1$ and $p_2$ on the factors. 
\medskip

We claim that $p_1$ is surjective. Since $\Gamma$ is a closed subset, it is enough to show that for $[x]\in \bP(R^1)$ general there exists $[v]\in \bP(R^1)$ such that $xvz=0$. Let $x$ be a general element of $R^1$. As $K^2_x\neq 0$ we have that there exists $[q]\in \bP(R^2)$ such that $xq=0$ in $R$. Then we have also $\mathrm{pr}_z(xq)=\overline{x}\overline{q}=0$ in $\bar{R}$. Since $[x]$ is general in $\bP(R^1)$, then the same holds for $[\bar{x}]\in\bP(\bar{R}^1)$, so we get $\bar{q}=0$ in $\bar{R}^2$, as $WLP_2$ holds for $\bar{R}$ by assumption. Then, by Lemma \ref{LEM:quotsaga}, we have $q\in (0:w)_2=(z)_2=z\cdot R^1$ so there exists $[v]\in \bP(R^1)$ such that $0=xq=xvz$ as claimed.
\medskip

In analogy with what happens for the construction \ref{CONSTR:OURFRAMEWORK}, we have that there exists a unique irreducible component $\Theta$ of $\Gamma$ which dominates $\bP(R^1)$ via $\pi_1$, where we set $\pi_i$ to be the restriction of $p_i$ to $\Theta$ for $i=1,2$. We have that for $[x]\in \bP(R^1)$ general 
$$\pi_1^{-1}([x])=p_1^{-1}([x])=[x]\times \bP(K^1_{xz})$$
so the general fiber of $\pi_1$ has dimension at most $1$ by Proposition \ref{PROP:SameKer}.
\medskip

Let us now show that the general fiber of $\pi_1$ has dimension $1$. Consider $[x]\in \bP(R^1)$ general. Firstly, let us observe that $([x],[w])$ belongs to $p_1^{-1}([x])=\pi_1^{-1}([x])$ since $zw=0$. 
As shown above, there exists $[q]\in \bP(R^2)$ such that $xq=0$ and $q=zv$ for suitable $[v]\in \bP(R^1)$. Moreover $[v]\ne[w]$ since, otherwise, $[q]$ would be zero, hence $\pi_1^{-1}([x])=\langle[w],[v]\rangle$ as claimed.
\medskip

By considering the second projection $\pi_2$, we have that for $[v]$ general in $Y=\pi_2(\Theta)$, the fiber $\pi_2^{-1}([v])$ is such that
$$\pi_2^{-1}([v])\subseteq p_2^{-1}([v])=\bP(K^1_{vz})\times [v],$$
which has dimension at most $1$ by Proposition \ref{PROP:SameKer}.
Since $\pi_1$ is dominant, for $([x],[v])\in \Theta$ general we have
$$n+1=\dim(\bP(R^1))+\dim(\pi_1^{-1}([x]))=\dim(\Theta)=\dim(Y)+\dim(\pi_2^{-1}([v])).$$
Since $\dim(Y)\le n$ and $\dim(\pi_2^{-1}([v]))\le 1$, for $v$ general, the only possibility is to have $\dim(\pi_2^{-1}([v]))=1$ and  $Y=\bP(R^1)$.
\medskip

We will show now that having $Y=\bP(R^1)$ gives a contradiction. First of all, by reasoning as in Proposition \ref{PROP:KERCOKERDelux}, one can prove that
$$Y\subseteq\{[v]\in\bP(R^1) \ | \ v^2z=0\}.$$
Since $Y=\bP(R^1)$ and squares of elements of $R^1$ generates $R^2$ (as $R$ is standard), we have that $z\cdot R^2=0$. This is impossible by Gorenstein duality.
\end{proof}

In the statement of Theorem \ref{THM:induction} we require $n+1\leq 6$ since for codimension $5$ the $WLP_2$ has already been proved (in \cite{AR19} or in \cite{BFP} as consequence of $SLP$) and in even smaller codimension, it easily follows from $WLP_1$ that is known to hold. From this, we get the following consequence.

\begin{corollary}
Let $R$ be a complete intersection SAGA of codimension $6$ presented by quadrics (e.g. $R$ is the jacobian ring of a cubic fourfold). If $\cL_1$ is not the whole $\bP(R^1)$, $R$ satisfies $WLP$.
\end{corollary}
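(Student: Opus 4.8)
The plan is to reduce the full $WLP$ for $R$ to the single property $WLP_2$, and then to obtain $WLP_2$ from the lifting criterion of Theorem \ref{THM:induction}.

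First I would observe that the reduction to $WLP_2$ is forced by the combinatorics of the socle degree. Since $R$ is a complete intersection SAGA of codimension $6$ presented by quadrics, its socle is in degree $N=n+1=6$. By the Remark following Definition \ref{DEF:LefProp}, Gorenstein duality gives $WLP_k\iff WLP_{N-1-k}=WLP_{5-k}$, while \cite[Proposition 2.1]{MMN} gives $WLP_k\Rightarrow WLP_{k-1}$ for $k\le N/2=3$. Hence $WLP_2\Rightarrow WLP_1\Rightarrow WLP_0$, and applying duality to these three we obtain $WLP_3$, $WLP_4$ and $WLP_5$ as well. Thus once $WLP_2$ is established, $WLP_k$ holds for every $k$, i.e. $R$ satisfies $WLP$.

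Next I would produce the non-Lefschetz element needed to run Theorem \ref{THM:induction}. The hypothesis $\cL_1\neq\bP(R^1)$ says precisely that there is $[z]\in\bP(R^1)\setminus\cL_1$; as recalled just before Theorem \ref{THM:induction}, this means $z$ is a non-Lefschetz element in degree $1$, so there exists $w\in R^1\setminus\{0\}$ with $zw=0$. By Lemma \ref{LEM:quotsaga}, the quotient $\bar R=R/(z)$ is then a complete intersection SAGA of codimension $n=5$ presented by quadrics. Complete intersection SAGAs of codimension $5$ presented by quadrics are known to satisfy the strong (hence the weak) Lefschetz property by \cite{AR19} (equivalently, as a consequence of the $SLP$ proved in \cite{BFP}); in particular $\bar R$ satisfies $WLP_2$.

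Finally I would invoke Theorem \ref{THM:induction}: $R$ has codimension $n+1=6\ge 6$, $z$ is a non-Lefschetz element, and $\bar R=R/(z)$ satisfies $WLP_2$, so the theorem yields $WLP_2$ for $R$. Combined with the first step, this gives the full $WLP$ for $R$, which (since the jacobian ring of a cubic fourfold is exactly such an $R$) establishes the claim. This proof is essentially an assembly of the previous results, and the only step requiring genuine care is the first one: verifying that the single property $WLP_2$ propagates to all degrees through the interplay of Gorenstein duality and the monotonicity \cite[Proposition 2.1]{MMN}, which is what makes the lifting criterion strong enough to conclude the whole of $WLP$.
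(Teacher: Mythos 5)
Your proof is correct and follows essentially the same route the paper intends: the corollary is stated there without an explicit proof, as an immediate consequence of Theorem \ref{THM:induction} combined with the preceding remark that codimension-$5$ complete intersection SAGAs presented by quadrics already satisfy $WLP_2$, and your reduction of the full $WLP$ to $WLP_2$ via Gorenstein duality and \cite[Proposition 2.1]{MMN} is exactly the standard argument the authors leave implicit. One minor citation slip: \cite{AR19} proves the \emph{weak} Lefschetz property in codimension $5$ (the strong one is due to \cite{BFP}), but either reference suffices for the $WLP_2$ of $\bar R$ that your argument needs.
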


\section{Some results for higher codimensions}
\label{SEC:Higher}

In this section, our aim is to obtain some results concerning Lefschetz properties for complete intersection SAGAs presented by quadrics with codimension equal to $n+1\ge 4$ by using techniques and results developed in the previous sections. 
\medskip

In particular, our framework will be as follows. Let us consider the polynomial ring $S=\bK[x_0\cdots,x_n]$, an ideal $I$ of $S$, generated by a regular sequence of quadrics $(g_0,\cdots,g_n)$ and the corresponding complete intersection SAGA $R=S/I$, which has codimension $n+1$ and socle in degree $N=n+1$. 
We recall that we have denoted by $SLP_1(k)$ the strong Lefschetz property in degree $1$ at range $k$: a SAGA $R$ will satisfy the property $SLP_1(k)$ if the multiplication map $x^k\cdot: R^1\rightarrow R^{k+1}$ is injective for $[x]$ general in $\bP(R^1)$. We recall that we have defined the loci $\Gamma_{i,j}^{(a,b)}$ in Section \ref{SEC:Notations}. For brevity, we will write $\Gamma_k$ to mean $\Gamma_{k,1}^{(1,1)}$.

We will prove the following:

\begin{theorem}
\label{THM:BRICO}
Let $k\in \{2,3,4\}$ and assume that $R$ is a complete intersection SAGA of codimension $n+1$ presented by quadrics. If $n\geq k+1$ we have that $R$ satisfies $SLP_1(k)$.
\end{theorem}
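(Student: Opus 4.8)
The plan is to argue by contradiction, mirroring the strategy behind Theorem \ref{THM:SLP1}. Assume $SLP_1(k)$ fails, so that $x^k\cdot:R^1\to R^{k+1}$ is never injective, and build the incidence variety $\Gamma_k=\Gamma_{k,1}^{(1,1)}$ together with its dominating component $\Theta$, the image $Y=\pi_2(\Theta)$ and the general fibre $F_y$ as in Construction \ref{CONSTR:OURFRAMEWORK}. Since $n\ge k+1$ we have $k\le n-1=N-2$, so all the preparatory machinery (Proposition \ref{PROP:KERCOKERDelux}, Lemma \ref{LEM:xtangy}, Lemma \ref{LEM:easyresults}) is available in this range, and we may use freely that $WLP_1$ holds for $R$ (Remark \ref{REM:WLP1}). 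The whole argument then reduces to pinning down $\dim(Y)$ and $\dim(F_y)$ tightly enough that the relation $\dim(Y)+\dim(F_y)=\dim(\Theta)\ge n$ of Lemma \ref{LEM:easyresults}(a) becomes untenable.

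For $Y$ I would record two upper bounds. On one hand, Proposition \ref{PROP:KERCOKERDelux} forces $y^{k+1}=0$ for $[y]\in Y$, so $Y\subseteq\cN_{k+1}$ and hence $\dim(Y)\le k-1$ by Remark \ref{REM:dimnihil}; on the other hand, Proposition \ref{PROP:dimYdiversoda_nmeno2} gives $\dim(Y)\le n-3$.

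The key step is to sharpen Proposition \ref{PROP:Fyhyp} to the estimate $\dim(F_y)\le k-1$, which is stronger than the bound $\dim(F_y)\le n-2$ stated there and is genuinely needed, since for instance the pair $(k,n)=(4,6)$ cannot otherwise be excluded. I would obtain it by the same descent as in Proposition \ref{PROP:Fyhyp}, but keeping track of the fibre dimension: let $m\le k$ be the least integer with $\Theta\subseteq\Gamma_m$. Because $WLP_1$ holds we must have $m\ge 2$ (otherwise $\Theta\subseteq\Gamma_1$ would dominate $\bP(R^1)$), and by minimality $x^{m-1}y\ne 0$ for $([x],[y])\in\Theta$ general. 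Applying Lemma \ref{LEM:xtangy}(b) at level $m$ gives $T_{\tilde{F}_y,x}\subseteq K^1_{x^{m-1}y}$, and since $x^{m-1}y\in R^m$ is non-zero, Proposition \ref{PROP:SameKer}(a) (with $d=3$) yields $\dim(K^1_{x^{m-1}y})\le m$; therefore $\dim(F_y)+1=\dim(\tilde{F}_y)\le\dim(T_{\tilde{F}_y,x})\le\dim(K^1_{x^{m-1}y})\le m\le k$, i.e. $\dim(F_y)\le k-1$. I expect this to be the main obstacle, as it is the only place where the exact value of $k$ (rather than $n$) enters the fibre estimate, and it is what forces the hypothesis $n\ge k+1$ to do its work.

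Finally I would combine the three bounds with $\dim(Y)+\dim(F_y)\ge n$, obtaining $n\le 2(k-1)$. This closes $k=2$ at once (there $n\ge 3>2$). For $k=3$ one checks directly that $\dim(Y)+\dim(F_y)\le\min(2,n-3)+2<n$ for every $n\ge 4$, again a contradiction. For $k=4$ only $n\in\{5,6\}$ survive, and in both the bounds force $\dim(F_y)=k-1=3$ together with $\dim(Y)\ge 2$. Since $\dim(\cN_{k-1})=\dim(\cN_3)\le 1$ by Remark \ref{REM:dimnihil}, the inequality $\dim(Y)\ge 2$ shows $Y\not\subseteq\cN_{k-1}$, so conditions (a) and (b) of Proposition \ref{PROP:2,3} both hold, which is the desired contradiction. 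Hence $SLP_1(k)$ cannot fail, proving the theorem.
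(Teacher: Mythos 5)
Your proof is correct, and although it runs inside the same framework as the paper (deny $SLP_1(k)$, construct $\Gamma_k$, $\Theta$, $Y$, $F_y$, and contradict $\dim(Y)+\dim(F_y)=\dim(\Theta)\ge n$ from Lemma \ref{LEM:easyresults}(a)), it closes the argument by a genuinely different route. The paper treats the three values of $k$ by three separate propositions: for $k=2$ it combines $Y\subseteq\cN_3$ with Proposition \ref{PROP:Fyhyp}; for $k=3$ it shows $F_y\subseteq\bP(K^1_{y^3})$, hence $\dim(F_y)\le 2$, for $n\ge5$, and quotes \cite{BFP} for $n=4$; for $k=4$ it proceeds by induction on $k$, feeding $SLP_1(3)$ into Lemma \ref{LEM:tech1}(a) to get $\dim(F_y)\le3$, quoting Theorem \ref{THM:SLP1} for $n=5$, and finishing the last case $(\dim Y,\dim F_y)=(3,3)$, $n=6$, with Lemma \ref{LEM:tech1}(b). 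Your key lemma, the unconditional bound $\dim(F_y)\le k-1$, is in effect Lemma \ref{LEM:tech1}(a) with the hypothesis $SLP_1(k-1)$ removed: the minimal $m$ with $\Theta\subseteq\Gamma_m$ satisfies $m\ge2$ by $WLP_1$ (Remark \ref{REM:WLP1}) and $x^{m-1}y\ne0$ generically by minimality, which is the descent mechanism of Proposition \ref{PROP:Fyhyp} repackaged so as to keep track of the fibre dimension rather than merely reach $\Gamma_1$. This buys a uniform proof with no induction on $k$, no citation of \cite{BFP} for $(k,n)=(3,4)$, and no dependence on Theorem \ref{THM:SLP1} for $(k,n)=(4,5)$ (your application of Proposition \ref{PROP:2,3} there re-derives precisely the $(2,3)$ cell of that theorem's table); what the paper's organization buys is reuse of already-established results and a Lemma \ref{LEM:tech1} stated for all $4\le k\le n-1$ rather than only $k\le 4$. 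Two points you leave implicit and should state: first, to apply Lemma \ref{LEM:xtangy}(b) ``at level $m$'' to the \emph{same} fibres $F_y$ you need $\Theta=\Theta_m$, i.e. that $\Theta$ is also the unique dominating component of $\Gamma_m$; this follows from $\Gamma_m\subseteq\Gamma_k$ and uniqueness of dominating components, exactly as in the proof of Proposition \ref{PROP:Fyhyp}. Second, Lemma \ref{LEM:xtangy} is stated under the restriction $k\le N/2$, but its proof (and the paper's own use of it in Propositions \ref{PROP:Fyhyp} and \ref{PROP:2,3}) only requires $k\le N-2$ so that Proposition \ref{PROP:KERCOKERDelux} applies, which is all your range $m\le k\le n-1$ needs.
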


We will show the above theorem by splitting up the proof in $3$ cases, which will be treated in the Propositions \ref{PROP:SLP1,2gen}, \ref{PROP:SLP1,3gen} and \ref{PROP:SLP1,4gen} respectively.

\begin{proposition}
\label{PROP:SLP1,2gen}
Property $SLP_1(2)$ holds for every $n\ge3$.
\end{proposition}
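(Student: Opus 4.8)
The plan is to argue by contradiction using the framework of Construction \ref{CONSTR:OURFRAMEWORK}, exactly as in the proof of Theorem \ref{THM:SLP1}, but now in codimension $n+1$ with $n \geq 3$ and at range $k=2$. So suppose $SLP_1(2)$ fails, i.e. the map $x^2\cdot : R^1 \to R^3$ is never injective for $x \in R^1$. Then $p_1 : \Gamma_2 \to \bP(R^1)$ is surjective, and we obtain the irreducible component $\Theta$ dominating $\bP(R^1)$ together with $Y = \pi_2(\Theta)$ and the fibers $F_y$. The aim is to show that the constraints on $\dim(Y)$ and $\dim(F_y)$ coming from the preparatory results in Subsection \ref{SUBSEC:prepSLP1} are incompatible.

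First I would record the numerical constraints. Since $d=3$ here, the socle is in degree $N = n+1$, and we are in range $k=2 \leq N-2 = n-1$ (which needs $n \geq 3$). By Lemma \ref{LEM:easyresults}(a) we have $\dim(Y) + \dim(F_y) = \dim(\Theta) \geq n$. Applying Proposition \ref{PROP:Fyhyp} with $d=3$, $k=2$ and the hypothesis $n > k/(d-2) = 2$ (i.e. $n \geq 3$) gives $\dim(F_y) \leq n-2$. On the other side, Proposition \ref{PROP:dimYdiversoda_nmeno2} gives $\dim(Y) \leq n-3$. Adding these two bounds yields
\begin{equation*}
\dim(\Theta) = \dim(Y) + \dim(F_y) \leq (n-3) + (n-2) = 2n - 5.
\end{equation*}

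The key remaining step is to improve the lower bound on $\dim(F_y)$ so as to clash with $\dim(F_y) \leq n-2$. This is where Proposition \ref{PROP:2,3} enters. For $k=2$, condition $(a)$ of that proposition reads $\dim(F_y) = k-1 = 1$, and condition $(b)$ reads $Y \not\subseteq \cN_{k-1} = \cN_1 = \emptyset$, which holds vacuously since $\cN_1$ is empty. Thus Proposition \ref{PROP:2,3} rules out $\dim(F_y) = 1$. Combined with the general bound $\dim(F_y) \geq 0$ and the fact (Remark \ref{REM:dimnihil} / Proposition \ref{PROP:Fyhyp}) that $F_y$ cannot be a hypersurface, I expect the admissible values of $\dim(F_y)$ to be pinned down so tightly that, together with $\dim(Y) \leq n-3$, the inequality $\dim(Y)+\dim(F_y) \geq n$ from Lemma \ref{LEM:easyresults}(a) can no longer be satisfied. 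Concretely, for the borderline case $n=3$ one has $\dim(Y) \leq 0$ and $\dim(F_y) \leq 1$, forcing $\dim(\Theta) \leq 1 < 3 = n$, an immediate contradiction; the general $n$ case should follow by the same bookkeeping, with Proposition \ref{PROP:2,3} eliminating the one pair $(\dim Y, \dim F_y)$ that would otherwise survive.

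The main obstacle I anticipate is not any single deep estimate but the clean organization of the case analysis on the pair $(\dim(Y), \dim(F_y))$, mirroring the table in the proof of Theorem \ref{THM:SLP1}. One must verify that every pair compatible with Lemma \ref{LEM:easyresults} is excluded by exactly one of Proposition \ref{PROP:Fyhyp}, Proposition \ref{PROP:dimYdiversoda_nmeno2}, or Proposition \ref{PROP:2,3}, paying particular attention to the smallest codimension $n=3$ where the ranges collapse and the vacuous emptiness of $\cN_1$ must be invoked to apply Proposition \ref{PROP:2,3}. Since no admissible pair remains, we reach the desired contradiction and conclude that $SLP_1(2)$ holds for all $n \geq 3$.
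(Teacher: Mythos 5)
Your setup (argument by contradiction, the framework of Construction \ref{CONSTR:OURFRAMEWORK}, and a table-style case analysis on $(\dim(Y),\dim(F_y))$) matches the paper's general strategy, but your dimension bookkeeping does not close, and the gap is genuine for every $n\ge 5$. Your constraints are $\dim(Y)+\dim(F_y)\ge n$ (Lemma \ref{LEM:easyresults}(a)), $\dim(F_y)\le n-2$ (Proposition \ref{PROP:Fyhyp}), $\dim(Y)\le n-3$ (Proposition \ref{PROP:dimYdiversoda_nmeno2}), together with $\dim(F_y)\ne 1$ from Proposition \ref{PROP:2,3}. For $n=3,4$ these are indeed incompatible, exactly as you compute. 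But for every $n\ge 5$ the pair $(\dim(Y),\dim(F_y))=(2,n-2)$ satisfies all of them (as do others, e.g.\ $(3,n-3)$ for $n\ge 6$), so no contradiction follows. In particular your closing claim that Proposition \ref{PROP:2,3} ``eliminates the one pair that would otherwise survive'' is false: at range $k=2$ that proposition only excludes $\dim(F_y)=1$, while the surviving pairs all have $\dim(F_y)\ge 2$.

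The missing idea --- and the heart of the paper's much shorter proof --- is that $Y$ lies in a nihilpotent locus whose dimension is bounded independently of $n$. Since $\Theta\subseteq\Gamma_2$ dominates $\bP(R^1)$ via $p_1$, Proposition \ref{PROP:KERCOKERDelux} (with $a=b=1$ and $k=2\le N-2$, which is where $n\ge 3$ enters) gives $\Theta\subseteq\Gamma_{i,j}^{(1,1)}$ for all $i+j=3$ with $j\ge 1$; taking $(i,j)=(0,3)$ yields $y^3=0$ for all $[y]\in Y$, i.e.\ $Y\subseteq\cN_3$. By Remark \ref{REM:dimnihil} (i.e.\ Lemma \ref{LEM:tangnihil} combined with Proposition \ref{PROP:SameKer}) one has $\dim(\cN_3)\le 1$, hence $\dim(Y)\le 1$ for \emph{every} $n$ --- vastly stronger than your bound $\dim(Y)\le n-3$. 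Lemma \ref{LEM:easyresults}(a) then forces $\dim(F_y)\ge n-1$, so $F_y$ is a hypersurface, contradicting Proposition \ref{PROP:Fyhyp}. If you replace your bound on $\dim(Y)$ by the containment $Y\subseteq\cN_3$, your argument closes for all $n\ge 3$ and coincides with the paper's proof; neither Proposition \ref{PROP:dimYdiversoda_nmeno2} nor Proposition \ref{PROP:2,3} is actually needed here.
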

\begin{proof}
Let us assume by contradiction that the multiplication map $x^2\cdot:R^1\rightarrow R^3$ is not injective for $[x]\in\bP(R^1)$. Then we can consider the locus $\Gamma_2=\{([x],[y])\in\bP(R^1)\times\bP(R^1) \ | \ x^2y=0\}$, with the corresponding varieties $\Theta$, $Y$ and $F_y$ defined as we have usually done. By Proposition \ref{PROP:KERCOKERDelux}, we obtain that $Y\subseteq\cN_3$. Then, by Lemmas \ref{LEM:tangnihil} and \ref{LEM:easyresults} we have that $\dim(Y)=1$: hence $F_y$ must be a hypersurface, which is absurd by Proposition \ref{PROP:Fyhyp}.
\end{proof}

The following result has been already proved when $n=4$ in \cite{BFP}.

\begin{proposition}
\label{PROP:SLP1,3gen}
Property $SLP_1(3)$ holds for every $n\ge4$.
\end{proposition}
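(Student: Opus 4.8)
The plan is to argue by contradiction, mimicking the structure of the proof of Proposition \ref{PROP:SLP1,2gen} but now ruling out each admissible pair $(\dim(Y),\dim(F_y))$ using the preparatory results of Subsection \ref{SUBSEC:prepSLP1}. Here $d=3$, $N=n+1$ and we are denying $SLP_1(3)$, so we form $\Gamma_3=\Gamma_{3,1}^{(1,1)}$ together with $\Theta$, $Y$ and $F_y$ as in Construction \ref{CONSTR:OURFRAMEWORK}. The general framework applies since $k=3$ satisfies $2\le k\le N-2$ as soon as $n\ge 4$. By Lemma \ref{LEM:easyresults} (using $b=1$, but splitting according to whether $k<N-2$ or $k=N-2$) we get the basic inequalities $\dim(Y)+\dim(F_y)=\dim(\Theta)\ge n$, together with $\dim(F_y)\le n-1$ and $1\le\dim(Y)$. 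Applying Proposition \ref{PROP:KERCOKERDelux} to $\Theta\subseteq\Gamma_3$ we obtain $Y\subseteq\cN_4$, and by Remark \ref{REM:dimnihil} this forces $\dim(Y)\le 2$.

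First I would dispose of the upper extreme for $F_y$. By Proposition \ref{PROP:Fyhyp} (valid for all $k\ge 2$ and $n>k/(d-2)=k$, which holds here since $n\ge 4>3=k$), $F_y$ cannot be a hypersurface, so $\dim(F_y)\le n-2$. Combined with $\dim(Y)+\dim(F_y)\ge n$ this gives $\dim(Y)\ge 2$, and together with $\dim(Y)\le 2$ from the paragraph above we conclude $\dim(Y)=2$ and hence $\dim(F_y)\ge n-2$, i.e. $\dim(F_y)=n-2$. So the only surviving case is $\dim(Y)=2$, $\dim(F_y)=n-2$.

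Now I would eliminate this last case using Proposition \ref{PROP:dimYdiversoda_nmeno2} and Proposition \ref{PROP:2,3}. Proposition \ref{PROP:dimYdiversoda_nmeno2} asserts $\dim(Y)\le n-3$; when $n\ge 5$ this already contradicts $\dim(Y)=2$ only if $n-3<2$, so for large $n$ this bound is too weak and one must instead invoke Proposition \ref{PROP:2,3}. The idea is that $\dim(F_y)=n-2$ and $k=3$ give $\dim(F_y)=k-1$ precisely when $n-2=2$, i.e. $n=4$; so the argument genuinely bifurcates on the value of $n$. For $n=4$ we are in codimension $5$, where $SLP$ is already known (Remark following the statement, \cite{AR19},\cite{BFP}), so strictly speaking the content is for $n\ge 5$: there $\dim(F_y)=n-2\ge 3>k-1=2$, and I would show this forces a contradiction with Proposition \ref{PROP:Fyhyp} applied after a descent, or directly with the dimension count $\dim(Y)=2>n-3$ from Proposition \ref{PROP:dimYdiversoda_nmeno2} once $n\ge 6$. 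The delicate range is $n=5$, where $\dim(Y)=2=n-3$ is not excluded by Proposition \ref{PROP:dimYdiversoda_nmeno2}, so here I expect the real work: one must combine $Y\subseteq\cN_4$ with Proposition \ref{PROP:2,3}, checking whether $Y\subseteq\cN_{k-1}=\cN_2$ (impossible by dimension, since $\dim(\cN_2)\le 0$ by Remark \ref{REM:dimnihil}) and whether $\dim(F_y)=k-1=2$ (which fails as $\dim(F_y)=3$), so Proposition \ref{PROP:2,3} does not apply verbatim and a separate tangent-space argument in the spirit of its proof is needed.

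The main obstacle will be exactly this $n=5$ boundary case, where the crude dimension inequalities are all simultaneously tight and none of the three preparatory propositions applies off the shelf. I would handle it by adapting the tangent-space computation of Proposition \ref{PROP:2,3}: take a general $p=([x],[y])\in\Theta$, use Lemma \ref{LEM:xtangy}(b) to control $T_{\tilde F_y,x}$ inside the kernels $K^1_{x^{\alpha}y^{\beta}}$ with $\alpha+\beta=3$, $\beta\ge1$, and exploit the surjectivity of $d_p\tilde\pi_2$ onto $T_{\tilde Y,y}$ to derive an inclusion $T_y(Y)\subseteq\bP(K^1_{x^3})=\pi_1^{-1}([x])$, which yields $\dim(Y)\le\dim(F_y)+\dim(Y)-n$, contradicting $\dim(F_y)\le n-2$. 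This is the same mechanism that closes Proposition \ref{PROP:2,3}, so the expectation is that the statement follows by organizing these cases into a table analogous to the one in the proof of Theorem \ref{THM:SLP1}, with each admissible pair $(\dim(Y),\dim(F_y))$ ruled out by one of Propositions \ref{PROP:Fyhyp}, \ref{PROP:dimYdiversoda_nmeno2}, \ref{PROP:2,3} together with Remark \ref{REM:dimnihil} and Lemma \ref{LEM:easyresults}.
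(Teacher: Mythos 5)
Your reduction is exactly the paper's: you build $\Gamma_3,\Theta,Y,F_y$, use Proposition \ref{PROP:KERCOKERDelux} and Remark \ref{REM:dimnihil} to get $Y\subseteq\cN_4$ and $\dim(Y)\le 2$, and combine Proposition \ref{PROP:Fyhyp} with Lemma \ref{LEM:easyresults} to pin down $(\dim(Y),\dim(F_y))=(2,n-2)$ as the only surviving case. The gap is in how you kill this case. Your appeal to Proposition \ref{PROP:dimYdiversoda_nmeno2} for $n\ge 6$ is backwards: that proposition gives the \emph{upper} bound $\dim(Y)\le n-3$, which is perfectly compatible with $\dim(Y)=2$ for every $n\ge 5$; it yields a contradiction only when $n-3<2$, i.e.\ at $n=4$ --- exactly as you yourself observe earlier in the same paragraph, so the proposal is internally inconsistent on this point. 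You correctly note that Proposition \ref{PROP:2,3} does not apply (condition $(a)$ fails since $\dim(F_y)=n-2>k-1$ for $n\ge 5$). But then, for \emph{every} $n\ge 5$ --- not just the ``delicate'' $n=5$ --- all you have left is an uncarried-out sketch, and that sketch, as described, reproduces the endgame of Proposition \ref{PROP:2,3} (deriving $T_y(Y)\subseteq\bP(K^1_{x^3})$), which rests on the equality $T_x(F_y)=\bP(K^1_{x^{k-1}y})$; that equality presupposes $\dim(F_y)=k-1$ and is not available here.

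What actually closes the case is much shorter, and you missed it. By the last assertion of Proposition \ref{PROP:KERCOKERDelux} (applicable since $k=3\le N-2$), \emph{all} points of $\Theta$ satisfy $xy^3=0$, so $F_y\subseteq\bP(K^1_{y^3})$ for $[y]\in Y$. Since $\dim(Y)=2>1\ge\dim(\cN_3)$ by Remark \ref{REM:dimnihil}, the general $[y]\in Y$ has $y^3\ne 0$, and then Proposition \ref{PROP:SameKer}(a) with $d=3$ gives $\dim(K^1_{y^3})\le 3$, hence
$$n-2=\dim(F_y)\le \dim\bP(K^1_{y^3})\le 2,$$
impossible for $n\ge 5$; the case $n=4$ is quoted from \cite{BFP}, consistently with your remark that codimension $5$ is known. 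Alternatively, your ``descent'' idea can be made rigorous and would also work uniformly for $n\ge 5$: by Lemma \ref{LEM:xtangy}(b), $T_{\tilde{F}_y,x}\subseteq K^1_{x^2y}$, so either $x^2y\ne 0$ and then $n-1=\dim(T_{\tilde{F}_y,x})\le\dim(K^1_{x^2y})\le 3$, or $x^2y=0$ on $\Theta$ and then $SLP_1(2)$ fails, contradicting Proposition \ref{PROP:SLP1,2gen}. Either route must actually be written out; as it stands, the proposal establishes the correct reduction but not the contradiction.
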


\begin{proof}
Let us assume by contradiction that the multiplication map $x^3\cdot:R^1\rightarrow R^4$ is not injective for $[x]\in\bP(R^1)$.
As in the proof of Proposition \ref{PROP:SLP1,2gen}, let us construct $\Gamma_3$, $\Theta$, $Y$ and $F_y$. In this case, we have that $Y\subseteq\cN_4$ so $\dim(Y)\le2$. If $[y]\in Y$ is general, the only possible value for $(\dim(Y),\dim(F_y))$ is $(2,n-2)$ since $F_y$ can not be a hypersurface by Proposition \ref{PROP:Fyhyp}. 
By dimension reasons, $Y\not\subseteq\cN_3$, thus we have that the general element $[y]$ of $Y$ is such that $y^3\ne0$. Then, by Proposition \ref{PROP:KERCOKERDelux}, we get $F_y$ must be contained in $\bP(K^1_{y^3})$, whose dimension is at most $2$. 
Then we have proved that $n-2=\dim(F_y)\leq 2$ which is impossible for $n\ge5$. 
\end{proof}

Before showing the analogous result for the $SLP_1(4)$, let us prove a technical result.
\medskip

\begin{lemma}
\label{LEM:tech1}
Let $R$ be a SAGA as at the beginning of Section \ref{SEC:Higher} and consider $4\leq k\leq n-1$. Assume that $R$ does not satisfy $SLP_1(k)$ so one can consider the varieties $\Gamma_k,\Theta,Y$ and $F_y$ constructed as in Section \ref{SEC:Notations}. For $[y]\in Y$ general we have the following properties.
\begin{enumerate}[(a)]
    \item If $R$ satisfies $SLP_{1}(k-1)$, then $\dim(F_y)\leq k-1$;
    \item $(\dim(Y),\dim(F_y))\ne(k-1,k-1)$.
\end{enumerate}
\end{lemma}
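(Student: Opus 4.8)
The plan is to treat the two parts separately, in both cases extracting dimension bounds from the Zariski tangent space to the affine cone $\tilde{F}_y$ at a general point. For part (a), I would first record that, since $\Theta\subseteq\Gamma_k$ and $F_y=\pi_1(\pi_2^{-1}([y]))$, every $[x]\in F_y$ satisfies $x^ky=0$; hence $\tilde{F}_y\subseteq\{x\in R^1\mid x^ky=0\}$. Differentiating this relation along a tangent vector at a general (smooth) point $x\in\tilde{F}_y$ gives $v\,x^{k-1}y=0$ for every $v\in T_{\tilde{F}_y,x}$, that is, $T_{\tilde{F}_y,x}\subseteq K^1_{x^{k-1}y}$; this is exactly Lemma \ref{LEM:xtangy}(b) with $(\alpha,\beta)=(k-1,1)$. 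Since $\dim T_{\tilde{F}_y,x}=\dim F_y+1$ at a general point, it only remains to bound $\dim K^1_{x^{k-1}y}$.

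The crucial step is to show that $x^{k-1}y\neq0$ for $([x],[y])\in\Theta$ general. If it vanished generically, then by irreducibility $\Theta\subseteq\Gamma_{k-1}$, and as $\pi_1$ is dominant the map $x^{k-1}\cdot:R^1\to R^k$ would fail to be injective for general $[x]$, contradicting the standing hypothesis that $R$ satisfies $SLP_1(k-1)$. Thus $[x^{k-1}y]\in\bP(R^k)$, and Proposition \ref{PROP:SameKer}(a) (with $d-2=1$) yields $\dim K^1_{x^{k-1}y}\le k$. Combining, $\dim F_y+1\le k$, i.e. $\dim F_y\le k-1$, as claimed.

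For part (b), I would argue by contradiction, assuming $\dim F_y=k-1$ for $[y]$ general. This is precisely condition (a) of Proposition \ref{PROP:2,3} (which applies since $d=3$ and $2\le k\le N-2$); the proposition then forces its condition (b) to fail, i.e. $Y\subseteq\cN_{k-1}$. By Remark \ref{REM:dimnihil} this gives $\dim Y\le\dim\cN_{k-1}\le k-3$, so in particular $\dim Y\neq k-1$ and the pair $(\dim Y,\dim F_y)$ cannot equal $(k-1,k-1)$.

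The main obstacle is the nonvanishing argument in part (a): once the $SLP_1(k-1)$ hypothesis is used to exclude $\Theta\subseteq\Gamma_{k-1}$, both parts reduce to dimension counts together with the cited structural results. Before finalizing I would double-check that every cited statement is available in the range $4\le k\le n-1$; since $N=n+1$ this means $k\le N-2$, which is exactly the range in which Proposition \ref{PROP:SameKer}, Proposition \ref{PROP:2,3} and Lemma \ref{LEM:xtangy} are stated to hold.
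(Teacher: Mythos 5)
Your proposal is correct. Part (a) coincides with the paper's own argument: both use Lemma \ref{LEM:xtangy}(b) (with $(\alpha,\beta)=(k-1,1)$), the fact that $SLP_1(k-1)$ forces $x^{k-1}y\neq 0$ at a general point of $\Theta$, and the bound $\dim(K^1_{x^{k-1}y})\leq k$ from Proposition \ref{PROP:SameKer}(a); the paper merely phrases it as a contradiction. For part (b) you take a genuinely different, and shorter, route. The paper does not invoke the statement of Proposition \ref{PROP:2,3}; it re-runs that proposition's proof in the special case at hand: it shows that $Y$ is an irreducible component of $\cN_{k+1}$ with $y^k\neq 0$ at a general point, deduces $F_y=\bP(K^1_{y^k})=T_{[y]}(Y)$, and then repeats the first-order differential computation with $(\alpha,\beta)=(k-1,2)$ and $(k,1)$ to conclude $Y\subseteq \pi_1^{-1}([x])$, which is dimensionally absurd. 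You instead observe that the hypothesis $\dim(F_y)=k-1$ is exactly condition (a) of Proposition \ref{PROP:2,3}, so its condition (b) must fail, i.e.\ $Y\subseteq \cN_{k-1}$, whence $\dim(Y)\leq k-3$ by Remark \ref{REM:dimnihil}. This is legitimate: the standing hypotheses of Subsection \ref{SUBSEC:prepSLP1} ($d=3$, failure of $SLP_1(k)$ with $2\leq k\leq N-2$) are satisfied in the range $4\leq k\leq n-1=N-2$, and it is the same combination (Proposition \ref{PROP:2,3} together with Remark \ref{REM:dimnihil}) that the paper itself uses to exclude the cell $(2,3)$ in the table proving Theorem \ref{THM:SLP1}. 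Your version buys brevity and in fact proves slightly more, since it shows that $\dim(F_y)=k-1$ forces $\dim(Y)\leq k-3$, ruling out every pair with $\dim(F_y)=k-1$ and $\dim(Y)\geq k-2$; the paper's inlined argument, on the other hand, makes the underlying geometry explicit, namely that $F_y$ is a linear space coinciding with $T_{[y]}(Y)$.
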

\begin{proof}
For $(a)$, let us assume by contradiction that for $[y]\in Y$ general, $\dim(F_y)=h\ge k$. Then, by Lemma \ref{LEM:xtangy}, we have that for $[x]\in F_y$ general $$T_{[x]}(F_y)=\bP(T_{\tilde{F}_y,x})\subseteq \bP(K^1_{x^{\alpha}y^{\beta}}),$$
where $\alpha+\beta=k$, with $\beta\ge1$ and $\tilde{F}_y$ is the affine cone over $F_y$. But since $SLP_1(k-1)$ holds for $R$ by hypothesis, for $([x],[y])\in \Theta$ general, we have that $x^{k-1}y\ne0$, and so 
$$h=\dim(F_y)\le\dim(\bP(K^1_{x^{k-1}y}))\le k-1,$$
which is impossible by the assumptions over $h$.

\medskip

For $(b)$, let us consider $[y]\in Y$ general and assume by contradiction that $\dim(Y)=\dim(F_y)=k-1$. By Proposition \ref{PROP:KERCOKERDelux} we get that $Y\subseteq\cN_{k+1}$, and by Lemma \ref{LEM:tangnihil} we deduce that $Y$ is an irreducible component of $\cN_{k+1}$ and we have that $y^k\ne0$. By reasoning as in the proof of Proposition \ref{PROP:SLP1,3gen} and by Proposition \ref{PROP:SameKer}, we get $F_y=\bP(K^1_{y^k})$.
Moreover, since $[y]$ is general in $Y$, we have $T_{[y]}(Y)=\bP(K^1_{y^k})=F_y$. With these conditions, we can proceed as in the proof of Proposition \ref{PROP:2,3} and consider for example the equations
$$0\equiv(x+tv)^{k-1}(y+tw)^2\, (\mbox{mod }\, t^2)\qquad 0\equiv (x+tv)^k(y+tw)\, (\mbox{mod }\, t^2),$$
where $w\in T_{\tilde{Y},y}$ and $(v,w)\in T_{\tilde{\Theta},(x,y)}$. In this way, we get that $Y\subseteq\pi_1^{-1}([x])$, where, as usual, $\pi_1: \Theta\subseteq \bP(R^1)\times \bP(R^1)\rightarrow\bP(R^1)$ is the first projection. This leads to a contradiction as shown in Proposition \ref{PROP:2,3}.
\end{proof}

We can now show the last case we need to prove Theorem \ref{THM:BRICO}. 

\begin{proposition}
\label{PROP:SLP1,4gen}
Property $SLP_1(4)$ holds for every $n\ge5$.
\end{proposition}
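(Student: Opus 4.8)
The plan is to argue by contradiction exactly as in Propositions \ref{PROP:SLP1,2gen} and \ref{PROP:SLP1,3gen}, now exploiting the finer dimension estimates of Lemma \ref{LEM:tech1}. Suppose $SLP_1(4)$ fails, so that $x^4\cdot:R^1\to R^5$ is never injective, and build $\Gamma_4$, $\Theta$, $Y$ and the general fiber $F_y$ as in Construction \ref{CONSTR:OURFRAMEWORK} with $k=4$. Since $n\ge 5$ we have $4\le k\le n-1$, so Lemma \ref{LEM:tech1} is available; moreover $SLP_1(3)$ already holds by Proposition \ref{PROP:SLP1,3gen}, which is precisely the hypothesis $SLP_1(k-1)$ of that lemma.

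First I would collect the constraints on the pair $(\dim(Y),\dim(F_y))$. Because $SLP_1(3)$ holds, Lemma \ref{LEM:tech1}(a) gives $\dim(F_y)\le 3$. Proposition \ref{PROP:KERCOKERDelux} forces $Y\subseteq\cN_5$, whence $\dim(Y)\le 3$ by Remark \ref{REM:dimnihil}. On the other hand Lemma \ref{LEM:easyresults}(a) gives $\dim(Y)+\dim(F_y)\ge n$, so $\dim(Y)\ge n-3$; combined with $\dim(Y)\le n-3$ from Proposition \ref{PROP:dimYdiversoda_nmeno2} this forces $\dim(Y)=n-3$, and then $\dim(Y)+\dim(F_y)\ge n$ together with $\dim(F_y)\le 3$ yields $\dim(F_y)=3$. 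Finally $n-3=\dim(Y)\le 3$ forces $n\in\{5,6\}$, so that only the pairs $(\dim(Y),\dim(F_y))=(2,3)$ (for $n=5$) and $(3,3)$ (for $n=6$) survive, every $n\ge 7$ being excluded outright.

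It then remains to exclude these two pairs, which is where the real content sits. The pair $(3,3)=(k-1,k-1)$ is ruled out directly by Lemma \ref{LEM:tech1}(b). For the pair $(2,3)$ I would invoke Proposition \ref{PROP:2,3} with $k=4$: its condition (a) holds because $\dim(F_y)=3=k-1$, and its condition (b) holds because $\dim(\cN_3)\le 1<2=\dim(Y)$ shows that $Y\not\subseteq\cN_{k-1}=\cN_3$. Since Proposition \ref{PROP:2,3} asserts that (a) and (b) are incompatible, we reach the desired contradiction, and having excluded every admissible pair completes the proof.

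I expect the arithmetic bookkeeping of the dimension bounds to be entirely routine; the main obstacle is really located upstream, in checking that the hypotheses of the two decisive tools are genuinely met in each surviving case — in particular confirming $Y\not\subseteq\cN_3$ for $n=5$ (via the strict inequality $\dim(\cN_3)<\dim(Y)$) and making sure the tangent-space comparison underlying Proposition \ref{PROP:2,3} applies verbatim with $k=4$.
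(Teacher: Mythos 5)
Your proof is correct and follows essentially the same route as the paper: argue by contradiction via Construction \ref{CONSTR:OURFRAMEWORK}, pin down $(\dim(Y),\dim(F_y))$ by dimension bookkeeping, and kill the surviving cases with Lemma \ref{LEM:tech1}(a),(b). The only organizational difference is that the paper dispatches $n=5$ by citing Theorem \ref{THM:SLP1} and narrows the cases for $n\ge 6$ using Proposition \ref{PROP:Fyhyp}, whereas you treat all $n\ge 5$ uniformly, sharpening the bookkeeping with Proposition \ref{PROP:dimYdiversoda_nmeno2}; your exclusion of the pair $(2,3)$ via Proposition \ref{PROP:2,3} together with Remark \ref{REM:dimnihil} is exactly the corresponding entry in the table proving Theorem \ref{THM:SLP1}, so nothing is lost and the hypotheses ($4\le k\le n-1$, $k\le N-2$) are indeed met in each case as you checked.
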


\begin{proof}
First of all, let us notice that the statement has been already proved for $n=5$ in Section \ref{SEC:SLP1}: we have to prove $SLP_1(4)$ for $n\ge6$.

\medskip

Let us assume that for $x\in R^1$, the multiplication map $x^4\cdot:R^1\rightarrow R^5$ is not injective. As usual, we can then consider the incidence correspondence $\Gamma_4=\{([x],[y])\in\bP(R^1)\times\bP(R^1) \ | \ x^4y=0\}$ and the corresponding varieties $\Theta$, $Y$ and $F_y$, for $[y]\in Y$ general. \\
By Proposition \ref{PROP:KERCOKERDelux}, we get that $Y\subseteq\cN_5$, so $\dim(Y)\le3$. By using the bounds of Lemma \ref{LEM:easyresults}(a) and Proposition \ref{PROP:Fyhyp}, the only possible cases for the values of $(\dim(Y),\dim(F_y))$ are
$$(2,n-2) \qquad (3,n-2) \qquad (3,n-3).$$
By Proposition \ref{PROP:SLP1,3gen}, we know that $SLP_1(3)$ holds for $n\ge6$. Then, by Lemma \ref{LEM:tech1}$(a)$, we get that $\dim(F_y)$ is at most $3$: the cases $(\dim(Y),\dim(F_y))=(2,n-2)$ and $(\dim(Y),\dim(F_y))=(3,n-2)$ can not occur for every $n\ge6$. We also have that $(\dim(Y),\dim(F_y))\ne(3,n-3)$ for every $n\ge7$. 

\medskip

The only case we have still to analyze is the one with $n=6$ and $\dim(Y)=\dim(F_y)=3$. By considering Lemma \ref{LEM:tech1}$(b)$, we can rule out this last possibility too: $SLP_1(4)$ holds for $R$, for every $n\ge5$.
\end{proof}

We conclude this section by observing how much can be easily said, by using these methods, for the $SLP_1$ for complete intersection SAGA of codimension $7$ (i.e. $n=6$) presented by quadrics (e.g. for the Jacobian ring of a smooth cubic fivefold).

\begin{corollary}
Let  $R$ be a complete intersection SAGA of codimension $7$ presented by quadrics. We have that $SLP_1(4)$ holds. Moreover, if $SLP_1(5)$ does not hold (i.e. if $SLP_1$ does not hold), then one can construct the varieties $\Gamma_5,\Theta,Y$ and $F_y$ as usual and we have, for $[y]\in Y$ general, $(\dim(Y),\dim(F_y))\in \{(2,4),(3,3)\}$.
\end{corollary}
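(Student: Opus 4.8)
The first assertion requires no new work: since $n=6\ge5$, Proposition \ref{PROP:SLP1,4gen} already yields $SLP_1(4)$ for $R$. For the second assertion my plan is to run exactly the dimension-counting scheme used to prove Proposition \ref{PROP:SLP1,4gen}, but now with $k=5$; the difference is that here I only need to narrow the list of admissible pairs down to two, rather than empty it. So I would assume $SLP_1(5)$ fails and invoke Construction \ref{CONSTR:OURFRAMEWORK} with $N=n+1=7$, $d=3$ and $k=5=N-2$ to produce $\Gamma_5=\Gamma_{5,1}^{(1,1)}$, $\Theta$, $Y$ and $F_y$.

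The next step is to collect, for $[y]\in Y$ general, all the numerical constraints already available. Lemma \ref{LEM:easyresults}(a) gives $\dim(Y)+\dim(F_y)=\dim(\Theta)\ge n=6$, and since $k=N-2$, part (c) of the same lemma gives $1\le\dim(Y)\le4$ together with $2\le\dim(F_y)\le5$. I would then sharpen both bounds: because $n=6>5=k/(d-2)$ and $k\ge2$, Proposition \ref{PROP:Fyhyp} improves the upper bound to $\dim(F_y)\le n-2=4$; and because $d=3$, Proposition \ref{PROP:dimYdiversoda_nmeno2} gives $\dim(Y)\le n-3=3$.

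With $1\le\dim(Y)\le3$, $2\le\dim(F_y)\le4$ and $\dim(Y)+\dim(F_y)\ge6$, the surviving pairs $(\dim(Y),\dim(F_y))$ are exactly $(2,4)$, $(3,3)$ and $(3,4)$ (note that $\dim(F_y)=2$ never survives, as it would force $\dim(Y)\ge4$). The whole content of the statement is therefore the exclusion of $(3,4)$, and I expect this to be the only step beyond bookkeeping. The idea is that $\dim(F_y)=4=k-1$ is precisely hypothesis (a) of Proposition \ref{PROP:2,3}, so its hypothesis (b) must fail, i.e.\ $Y\subseteq\cN_{k-1}=\cN_4$. Remark \ref{REM:dimnihil} bounds $\dim(\cN_4)\le4-2=2$, so $\dim(Y)\le2$, which contradicts $\dim(Y)=3$ and rules out $(3,4)$. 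The two remaining pairs $(2,4)$ and $(3,3)$ are exactly those in the statement, so this completes the argument.
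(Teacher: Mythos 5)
Your proposal is correct and follows essentially the same route as the paper: the paper's proof also combines Lemma \ref{LEM:easyresults} with Proposition \ref{PROP:Fyhyp} (excluding $\dim(F_y)=5$), Proposition \ref{PROP:dimYdiversoda_nmeno2} (excluding $\dim(Y)=4$), and Proposition \ref{PROP:2,3} together with Remark \ref{REM:dimnihil} (excluding the pair $(3,4)$), only organized as a table of cases rather than as sharpened bounds followed by a single exclusion. All your invocations check out numerically ($n=6>5=k/(d-2)$, $k=5=N-2$, $\dim\cN_4\le 2$), so no gap remains.
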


\begin{proof}
Property $SLP_1(4)$ holds by Theorem \ref{THM:BRICO}. Assume that $SLP_1(5)$ does not hold for $R$. Then. by Lemma \ref{LEM:easyresults}, we have 
\begin{equation}
\label{EQ:bounds2}
1\le\dim(Y)\le4 \qquad 2\le\dim(F_y)\le5 \qquad \dim(Y)+\dim(F_y)=\dim(\Theta)\ge 6.
\end{equation}
As in Theorem \ref{THM:SLP1}, we put in a table the possible values of the pairs $(\dim(Y),\dim(F_y))$ and we specify which result rules out the corresponding case.

\begin{center}
\begin{tabular}{c||c|c|c|c|}
$\dim(Y)$ VS $\dim(F_y)$ & $2$ & $3$ & $4$ & $5$\\
\hline\hline
1 & \eqref{EQ:bounds2} & \eqref{EQ:bounds2} & \eqref{EQ:bounds2} & Prop. \ref{PROP:Fyhyp}\\ \hline
2 & \eqref{EQ:bounds2} & \eqref{EQ:bounds2} &  & Prop. \ref{PROP:Fyhyp}\\ \hline
3 & \eqref{EQ:bounds2} &  & Prop. \ref{PROP:2,3} + Rem. \ref{REM:dimnihil} & Prop. \ref{PROP:Fyhyp}\\
\hline
4 & Prop. \ref{PROP:dimYdiversoda_nmeno2} & Prop. \ref{PROP:dimYdiversoda_nmeno2} & Prop. \ref{PROP:dimYdiversoda_nmeno2} & Prop. \ref{PROP:Fyhyp} \\ \hline
\end{tabular}
\end{center}
This concludes the proof.
\end{proof}


\end{document}